  \DeclareSymbolFont{stix@largesymbols}{LS2}{stixex}{m}{n}
  \DeclareMathDelimiter{\lBrace}{\mathopen} {stix@largesymbols}{"E8}%
                                            {stix@largesymbols}{"0E}
  \DeclareMathDelimiter{\rBrace}{\mathclose}{stix@largesymbols}{"E9}%
                                            {stix@largesymbols}{"0F}
\newcommand{\la}{\langle \,}
\newcommand{\ra}{\, \rangle}
\newcommand{\al}{\alpha}
\newcommand{\be}{\beta}
\newcommand{\Lim}{\text{Lim}}
\newcommand{\ob}{\texttt{\textup (} }
\newcommand{\cb}{ \texttt{\textup )}}
\newcommand{\BC}{\textbf{\textup{BC}}}
\newcommand{\RC}{\bm{\mathbf{RC}_{\Gamma_0}}}
\newcommand{\GLP}{\mathbf{GLP}}
\newcommand{\GLPG}{\bm{\mathbf{GLP}_{\Gamma_0}}}
\newtheorem{Theorem}{Theorem}
\newtheorem{Proposition}[Theorem]{Proposition}
\newtheorem{Example}[Theorem]{Example}
\newtheorem{Definition}[Theorem]{Definition}
\newtheorem{Lemma}[Theorem]{Lemma}
\newtheorem{Corollary}[Theorem]{Corollary}
\newtheorem{Remark}[Theorem]{Remark}
\newcommand{\xcases}[2]{\nicefrac #1  #2}
\newcommand{\ignore}[1]{}
\newcommand{\worms}[1]{{\mathbb W}_{#1}}
\newcommand{\wormsBC}{\worms{\ob\cb}}
\newcommand{\forms}[1]{\mathbb F_{#1}}
\newcommand{\form}{\forms{\ob\cb}}
\newcommand{\ord}{{\sf Ord}}
\newcommand{\fv}{B}
\newcommand{\fw}{A}
\newcommand{\wge}[1]{  >_0}
\newcommand{\wgeq}[1]{  \geq_0}
\newcommand{\wle}[1]{  <_0}
\newcommand{\wleq}[1]{  \leq_0}
\newcommand{\fs}[2]{#1 [ #2 ] }
\newcommand{\fsi}[2]{#1\llbracket #2 \rrbracket }
\newcommand{\ga}{\gamma}
\newcommand{\vnf}{\equiv_{\sf VNF}}
\newcommand{\ot}{\upphi}
\newcommand{\atr}{\mathbf{ATR}_0}
\begin{document}

\title{Deducibility and Independence in Beklemishev's Autonomous Provability Calculus}

\author{David Fern\'andez-Duque\footnote{\href{mailto:david.fernandezduque@ugent.be}{david.fernandezduque@ugent.be}}  \\ Eduardo Hermo Reyes\footnote{\href{mailto:ehermo.reyes@ub.edu}{ehermo.reyes@ub.edu}}} 
\maketitle

\begin{abstract}
Beklemishev introduced an ordinal notation system for the Feferman-Sch\"utte ordinal $\Gamma_0$ based on the \emph{autonomous expansion} of provability algebras. In this paper we present the logic $\BC$ (for \emph{Bracket Calculus}). The language of $\BC$ extends said ordinal notation system to a strictly positive modal language. Thus, unlike other provability logics, $\BC$ is based on a self-contained signature that gives rise to an ordinal notation system instead of modalities indexed by some ordinal given {\em a priori.} The presented logic is proven to be equivalent to $\RC$, that is, to the strictly positive fragment of $\GLPG$.
We then define a combinatorial statement based on $\BC$ and show it to be independent of the theory $\atr$ of Arithmetical Transfinite Recursion, a theory of second order arithmetic far more powerful than Peano Arithmetic.

\end{abstract}

\section{Introduction}

In view of G\"odel's second incompleteness theorem, we know that the consistency of any sufficiently powerful formal theory cannot be established using purely `finitary' means.
Since then, the field of proof theory, and more specifically of ordinal analysis, has been successful in measuring the non-finitary assumptions required to prove consistency assertions via computable ordinals.
Among the benefits of this work is the ability to linearly order natural theories of arithmetic with respect to notions such as their `consistency strength' (e.g., their $\Pi^0_1$ ordinal) or their `computational strength' (their $\Pi^0_2$ ordinal).
Nevertheless, the assignment of these proof-theoretic ordinals to formal theories depends on a choice of a `natural' presentation for such ordinals, with well-known pathological examples having been presented by Kreisel \cite{Kreisel1976} and Beklemishev \cite{BeklemishevPathological}.\footnote{The $\Pi^1_1$ ordinal of a theory is another measure of its strength and does not have such sensitivity to a choice of notation system. However, there are some advantages to considering $\Pi^0_1$ ordinals, among others that they give a finer-grained classification of theories.}
This raises the question of what it means for something to be a natural ordinal notation system, or even if such a notion is meaningful at all.

One possible approach to this problem comes from Beklemishev's ordinal analysis of Peano arithmetic ($\mathbf{PA}$) and related theories via their {\em provability algebras.}
Consider the Lindenbaum algebra of the language of arithmetic modulo provability in a finitary theory $U$ such as primitive recursive arithmetic ($\mathbf{PRA}$) or the weaker {\em elementary arithmetic} ($\mathbf{EA}$).
For each natural number $n$ and each formula $\varphi$, the $n$-consistency of $\varphi$ is the statement that all $\Sigma_n$ consequences of $U + \varphi$ are true, formalizable by some arithmetical formula $\langle n \rangle \varphi$ (where $\varphi$ is identified with its G\"odel number).
In particular, $\langle 0 \rangle \varphi$ states that $\varphi$ is consistent with $U$.
An {\em iterated consistency assertion,} also called {\em worm,} is then an expression of the form $\langle n_1 \rangle \hdots \langle n_k\rangle \top$, where $\top$ is some fixed tautology.

The operators $\langle n\rangle$ and their duals $[n]$ satisfy Japaridze's provability logic $\mathbf{GLP}$ \cite{Japaridze:1986:PhdThesis}, a multi-modal extension of the G\"odel-L\"ob provability logic $\mathbf{GL}$ \cite{Boolos:1993:LogicOfProvability}.
As Beklemishev showed, the set of worms is well-ordered by their {\em consistency strength} $\wle{}$, where $\fw \wle{} \fv$ if $\fw \to \langle 0 \rangle \fv$ is derivable in $\mathbf{GLP}$.
Moreover, this well-order is of order-type $\varepsilon_0$, which characterizes the proof-theoretical strength of $\mathbf{PA}$.
This tells us that proof-theoretic ordinals already appear naturally within Lindenbaum algebras of arithmetical theories.
Using these ideas, Beklemishev has shown how the logic $\mathbf{GLP}$ gives rise to the {\em Worm principle,} a relatively simple combinatorial principle which is independent of $\mathbf{PA}$ \cite{Beklemishev:2004:ProvabilityAlgebrasAndOrdinals}. 

Beklemishev also observed that this process can be extended by considering worms with ordinal entries.
Extensions of $\GLP$, denoted ${\GLP}_\Lambda$, have been considered in cases where $\Lambda$ is an ordinal \cite{Beklemishev:2005:VeblenInGLP,Fernandez:2012:TopologicalCompleteness,FernandezJoosten:2012:WellOrders} or even an arbitrary linear order \cite{BeklemishevFernandezJoosten:2012:LinearlyOrderedGLP}.
Proof-theoretic interpretations for $\mathbf{GLP}_\Lambda$ have been developed by Fern\'andez-Duque and Joosten \cite{FernandezJoosten:2013:OmegaRuleInterpretationGLP} for the case where $\Lambda$ is a computable well-order.
Nevertheless, we now find ourselves in a situation where an expression $\langle \lambda \rangle \varphi$ requires a system of notation for the ordinal $\lambda$.
Fortunately we may `borrow' this notation from finitary worms and represent $\lambda$ itself as a worm.
Iterating this process we obtain the {\em autonomous worms,} whose order types are exactly the ordinals below the Feferman-Sch\"utte ordinal $\Gamma_0$.
By iterating this process we obtain a notation system for worms which uses only parentheses, as ordinals (including natural numbers) can be iteratively represented in this fashion. Thus the worm $\langle 0 \rangle \top$ becomes $\ob\cb$, $\langle 1 \rangle \top$ becomes $\ob\ob\cb\cb$, $\langle \omega \rangle \top$ becomes $\ob\ob\ob\cb\cb\cb$, etc.

These are Beklemishev's {\em brackets,} which provide a notation system for $\Gamma_0$ without any reference to an externally given ordinal \cite{Beklemishev:2005:VeblenInGLP}.
However, it has the drawback that the actual computation of the ordering between different worms is achieved via a translation into a traditional ordinal notation system.
We will remove the need for such an intermediate step by providing an autonomous calculus for determining the ordering relation (and, more generally, the logical consequence relation) between bracket notations.
To this end we present the {\em Bracket Calculus} ($\mathbf{BC}$).
We show that our calculus is sound and complete with respect to the intended embedding into $\GLP_{\Gamma_0}$.
We then show that the Worm principle can be naturally extended to $\mathbf{BC}$ to yield independence for theories of strength $\Gamma_0$, particularly the theory Arithmetical Transfinite Recursion $\atr$, one of the `Big Five' of reverse mathematics \cite{Simpson:2009:SubsystemsOfSecondOrderArithmetic}.

\section{The Reflection Calculus}

Japaridze's logic $\GLP$ gained much interest due to Beklemishev's proof-theoretic applications \cite{Beklemishev:2004:ProvabilityAlgebrasAndOrdinals}; however, from a modal logic point of view, it is not an easy system to work with. To this end, in \cite{Beklemishev:2012:CalibratingProvabilityLogic,Beklemishev:2013:PositiveProvabilityLogic,Dashkov:2012:PositiveFragment} Beklemishev and Dashkov introduced the system called \emph{Reflection Calculus}, \textbf{RC}, that axiomatizes the fragment of $\GLP_\omega$ consisting of implications of strictly positive formulas. This system is much simpler than $\GLP_\omega$ but yet expressive enough to maintain its main proof-theoretic applications.
In this paper we will focus exclusively on reflection calculi, but the interested reader may find more information on the full $\GLP$ in the references provided.

Similar to $\GLP_\Lambda$, the signature of $\textbf{RC}_\Lambda$ contains modalities of the form $\la \al \ra$ for $\al \in \Lambda$. However, since this system only considers strictly positive formulas, the signature does not contain negation, disjunction or modalities $\lbrack \, \al \, \rbrack$. Thus, consider a modal language $\mathcal{L}$ with a constant $\top$, a set of propositional variables $p,q,\ldots$, a binary connective $\wedge$ and unary connectives $\la \al \ra$, for each $\al \in \Lambda$. The set of formulas in this signature is defined as follows:

\begin{Definition}
Fix an ordinal $\Lambda$.
By $\mathbb{F}_{\Lambda}$ we denote the set of formulas built up by the following grammar:
\[
\varphi := \top \ | \ p \ | \ (\varphi \land \psi) \ | \ \la \al \ra \varphi \ \ \text{ for } \al \in \Lambda.
\]
\end{Definition}

Next we define a consequence relation over $\mathbb{F}_{\Lambda}$.
For the purposes of this paper, a {\em deductive calculus} is a pair $\mathbf X = ( \mathbb F_\mathbf X, \vdash_\mathbf X)$ such that $\mathbb F_\mathbf X$ is some set, the {\em language} of $\mathbf X$, and ${\vdash_\mathbf X} \subseteq \mathbb F_\mathbf X \times \mathbb F_\mathbf X$.
We write $\varphi \equiv_\mathbf X \psi$ for $\varphi \vdash_\mathbf X \psi$ and $\psi \vdash_\mathbf X \varphi$.
We will omit the subscript $\mathbf X$ when this does not lead to confusion, including in the definition below, where $\vdash$ denotes $\vdash_{\mathbf{RC}_\Lambda}$.

\begin{Definition}
Given an ordinal $\Lambda$, the calculus $\mathbf{RC}_\Lambda$ over $\forms\Lambda$ is given by the following set of axioms and rules:\medskip

\noindent Axioms:
\begin{multicols}2
\begin{enumerate}
\item \label{RCax}
$\varphi \vdash \varphi, \ \ \ \varphi \vdash \top$;	

\item 
$\varphi \wedge \psi \vdash \varphi,  \ \ \ \varphi \wedge \psi \vdash \psi$;	\label{RCconjel}

\item 
$\la \al \ra \la \al \ra \varphi \vdash \la \al \ra \varphi$; 
\label{RCtrans}

\item 
$\la \al \ra \varphi \vdash \la \be \ra \varphi$ \ \ \ for $\al > \be$;
\label{RCmon}

\item
$\la \al \ra \varphi  \land \la \be\ra \psi \vdash \la \al \ra  \big( \varphi \land \la \be \ra \psi\big)$ \ \ \ for $\al > \be$.
\label{RCconj}
\end{enumerate}
\end{multicols}

\noindent Rules:
\begin{multicols}2
\begin{enumerate}
\item \label{Rr:1}
If $\varphi \vdash \psi$ and $\varphi\vdash\chi$, then $\varphi\vdash \psi \wedge \chi $;	

\item \label{Rr:2}
If $\varphi\vdash \psi$ and $\psi \vdash \chi$, then $\varphi\vdash \chi $;	

\item \label{Rr:3}
If $\varphi\vdash \psi$, then $\la \al \ra \varphi \vdash \la \al \ra \psi$;	
\end{enumerate}
\end{multicols}
\end{Definition}

For each $\textbf{RC}_\Lambda$-formula $\varphi$, we can define the signature of $\varphi$ as the set of ordinals occurring in any of its modalities.
\begin{Definition}
For any $\varphi \in \mathbb{F}_\Lambda$, we define the \emph{signature} of $\varphi$, $\mathcal{S}(\varphi)$, as follows:\\
\begin{enumerate*}
\item $\mathcal{S}(\top) = \mathcal{S}(p) = \varnothing$;\\
\item $\mathcal{S}(\varphi \land \psi) = \mathcal{S}(\varphi) \cup \mathcal{S}(\psi)$;\\
\item $\mathcal{S}(\la \al \ra \varphi) = \{\al\} \cup \mathcal{S}(\varphi)$. 
\end{enumerate*} 
\end{Definition}

With the help of this last definition we can make the following observation: 

\begin{Lemma} \label{Lemma:OrdinalsBoundedByDerivability}
For any $\varphi, \, \psi \in \mathbb{F}_\Lambda$:\\
\begin{enumerate*}
\item If $\mathcal{S}(\psi) \neq \varnothing$ and $\varphi \vdash \psi$, then $\max\mathcal{S}(\varphi) \geq \max\mathcal{S}(\psi)$; \label{Ord>0}\\

\item If $\mathcal{S}(\varphi) = \varnothing$ and $\varphi \vdash \psi$, then $\mathcal{S}(\psi) = \varnothing$. \label{Ord=0}
\end{enumerate*} 
\end{Lemma}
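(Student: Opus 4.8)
The plan is to prove both items by a single induction on the derivation of $\varphi \vdash \psi$ in $\mathbf{RC}_\Lambda$, since the two statements reinforce each other and the axioms and rules are few. For item~\ref{Ord>0}, the key monotonicity quantity is $\max \mathcal{S}(\varphi)$, and I would show that whenever $\varphi \vdash \psi$ and $\mathcal{S}(\psi) \neq \varnothing$, this maximum does not decrease; for item~\ref{Ord=0}, I would show that a modality-free antecedent can only derive a modality-free consequent. It is convenient to treat them together: in the inductive step for, say, the cut rule (Rule~\ref{Rr:2}), if the middle formula $\psi$ has empty signature we invoke item~\ref{Ord=0} on the first premise and then item~\ref{Ord=0} again on the second, whereas if $\mathcal{S}(\psi) \neq \varnothing$ we chain the inequalities from item~\ref{Ord>0}.

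First I would handle the axioms. For the axiom schemata $\varphi \vdash \varphi$ and $\varphi \wedge \psi \vdash \varphi$, $\varphi \wedge \psi \vdash \psi$, and $\la\al\ra\la\al\ra\varphi \vdash \la\al\ra\varphi$, one checks directly from the definition of $\mathcal{S}$ that $\mathcal{S}(\text{consequent}) \subseteq \mathcal{S}(\text{antecedent})$, which immediately gives both items. The axiom $\la\al\ra\varphi \vdash \la\be\ra\varphi$ for $\al > \be$ has antecedent signature $\{\al\} \cup \mathcal{S}(\varphi)$ and consequent signature $\{\be\} \cup \mathcal{S}(\varphi)$; since $\al > \be$, the maximum of the former is $\geq \al > \be$ and is $\geq \max\mathcal{S}(\varphi)$, hence $\geq \max$ of the latter, and the antecedent signature is nonempty, so item~\ref{Ord=0} is vacuous here. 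The axiom $\la\al\ra\varphi \wedge \la\be\ra\psi \vdash \la\al\ra(\varphi \wedge \la\be\ra\psi)$ for $\al > \be$ has equal signatures on both sides, namely $\{\al,\be\} \cup \mathcal{S}(\varphi) \cup \mathcal{S}(\psi)$, so both items are immediate.

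Then I would handle the rules. For Rule~\ref{Rr:1} (introduction of $\wedge$ on the right), $\mathcal{S}(\psi \wedge \chi) = \mathcal{S}(\psi) \cup \mathcal{S}(\chi)$; if this is nonempty then at least one of $\mathcal{S}(\psi)$, $\mathcal{S}(\chi)$ is nonempty, and applying item~\ref{Ord>0} to whichever premise(s) have nonempty consequent signature — and item~\ref{Ord=0} to the other, to see its signature stays empty and hence contributes nothing to the max — yields $\max\mathcal{S}(\varphi) \geq \max(\mathcal{S}(\psi)\cup\mathcal{S}(\chi))$; and if $\mathcal{S}(\varphi) = \varnothing$ then by item~\ref{Ord=0} on both premises $\mathcal{S}(\psi) = \mathcal{S}(\chi) = \varnothing$. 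Rule~\ref{Rr:2} is the cut case sketched above. For Rule~\ref{Rr:3} (necessitation $\la\al\ra\varphi \vdash \la\al\ra\psi$ from $\varphi \vdash \psi$), we have $\mathcal{S}(\la\al\ra\varphi) = \{\al\}\cup\mathcal{S}(\varphi)$ and $\mathcal{S}(\la\al\ra\psi) = \{\al\}\cup\mathcal{S}(\psi)$, both nonempty, so item~\ref{Ord=0} is vacuous; for item~\ref{Ord>0}, if $\mathcal{S}(\psi) = \varnothing$ then $\max\mathcal{S}(\la\al\ra\psi) = \al = \max\{\al\} \leq \max(\{\al\}\cup\mathcal{S}(\varphi))$, while if $\mathcal{S}(\psi) \neq \varnothing$ then by the induction hypothesis $\max\mathcal{S}(\varphi) \geq \max\mathcal{S}(\psi)$, so $\max(\{\al\}\cup\mathcal{S}(\varphi)) \geq \max(\{\al\}\cup\mathcal{S}(\psi))$. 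No step here is genuinely hard; the only thing to be careful about is the bookkeeping in the cut and conjunction cases, where one must split on whether the relevant intermediate or branch signatures are empty before applying the correct item of the lemma — a consequence of the fact that item~\ref{Ord>0} is stated only under the hypothesis $\mathcal{S}(\psi) \neq \varnothing$.
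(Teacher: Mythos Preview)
Your proposal is correct and follows exactly the approach the paper indicates: an induction on the length of the derivation of $\varphi \vdash \psi$, checking each axiom and rule. The paper merely states ``By an easy induction on the length of the derivation of $\varphi \vdash \psi$'' without spelling out the cases, so your write-up is simply a more detailed version of the same argument; the only omission is the axiom $\varphi \vdash \top$, but that is covered by your observation $\mathcal{S}(\text{consequent}) \subseteq \mathcal{S}(\text{antecedent})$ since $\mathcal{S}(\top) = \varnothing$.
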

\begin{proof}
By an easy induction on the length of the derivation of $\varphi \vdash \psi$.
\end{proof}

The reflection calculus has natural arithmetical \cite{FernandezJoosten:2013:OmegaRuleInterpretationGLP}, Kripke \cite{Dashkov:2012:PositiveFragment,Beklemishev:2013:PositiveProvabilityLogic}, algebraic \cite{BeklemishevUniversal} and topological \cite{BeklemishevGabelaia:2011:TopologicalCompletenessGLP,Fernandez:2012:TopologicalCompleteness,Icard:2009:TopologyGLP,Ignatiev:1993:StrongProvabilityPredicates} interpretations for which it is sound and complete, but in this paper we will work exclusively with reflection calculi from a syntactical perspective.
Other variants of the reflection calculus have been proposed, for example working exclusively with worms \cite{WormCalculus}, admitting the transfinite iteration of modalities \cite{HermoJoosten:2017:RelationalSemanticsTSC}, or allowing additional conservativity operators \cite{BeklemishevPartial,BeklemishevSpectra}.

\section{Worms and the consistency ordering}

In this section we review the consistency ordering between worms, along with some of their basic properties.

\begin{Definition}
Fix an ordinal $\Lambda$. The set of \emph{worms} in $\forms\Lambda$, $\worms\Lambda$, is recursively defined as follows:
\begin{enumerate*}
\item $\top \in \worms\Lambda$;
\item If $A \in \worms\Lambda$ and $\alpha < \Lambda$, then $\la \al \ra A \in \worms\Lambda$. 
\end{enumerate*}
Similarly, we inductively define for each $\al \in \Lambda$ the set of worms $\worms\Lambda^{\geq \al}$ where all ordinals are at least $\al$: 
\begin{enumerate*}
\item $\top \in \worms\Lambda^{\geq \al}$;
\item If $A \in \worms\Lambda^{\geq \al}$ and $\be \geq \al$, then $\la \be \ra  A \in \worms\Lambda^{\geq \al}$.
\end{enumerate*}
\end{Definition}

From now on, we shall make use of the usual arithmetical operations on ordinal numbers such as addition ($\al + \be$) or exponentiation ($\al^\be$); see e.g.~\cite{Spiders} for definitions.

\begin{Definition}\label{defUparrow}
Let ${A}= \langle \xi_1 \rangle \hdots \langle \xi_n \rangle \top$ and $B= \langle \zeta_1 \rangle \hdots \langle \zeta_m \rangle \top$ be worms. Then, define
${A} B= \langle \xi_1 \rangle \hdots \langle \xi_n \rangle \langle  \zeta_1 \rangle \hdots \langle \zeta_m \rangle \top.$
Given an ordinal $\lambda$, define $\lambda \uparrow A$ to be
$\langle \lambda + \xi_1 \rangle \hdots \langle \lambda + \xi_n \rangle \top.$
\end{Definition}

Often we will want to put an extra ordinal between two worms, and we write ${B} \langle \lambda \rangle A$ for $\fv(\langle \lambda \rangle \fw)$.
Next, we define the consistency ordering between worms.

\begin{Definition}
Given an ordinal $\Lambda$, we define a relation $\wle{\lambda}$ on $\worms\Lambda$ by  ${B} \wle {\lambda} A$ if and only if $ A \vdash \langle 0 \rangle {B}.$ We also define $\fv\wleq\mu\fw$ if $\fv\wle\mu\fw$ or $\fv\equiv\fw$.
\end{Definition}

The ordering $\wleq{}$ has some nice properties.
Recall that if $A$ is a set (or class), a {\em preorder} on $A$ is a transitive, reflexive relation ${\preccurlyeq}\subseteq A\times A$. The preorder $\preccurlyeq$ is {\em total} if, given $a,b\in A$, we always have that $a\preccurlyeq b$ or $b\preccurlyeq a$, and {\em antisymmetric} if whenever $a\preccurlyeq b$ and $b\preccurlyeq a$, it follows that $a=b$. A total, antisymmetric preorder is a {\em linear order.} We say that $\langle A,\preccurlyeq\rangle$ is a {\em pre-well-order} if $\preccurlyeq$ is a total preorder and every non-empty $B\subseteq A$ has a minimal element (i.e., there is $m\in B$ such that $m\preccurlyeq b$ for all $b\in B$). A {\em well-order} is a pre-well-order that is also linear. Note that pre-well-orders are not the same as well-quasiorders (the latter need not be total).
Pre-well-orders will be convenient to us because, as we will see, worms are pre-well-ordered but not linearly ordered. 
The following was first proven by Beklemishev \cite{Beklemishev:2005:VeblenInGLP}, and a variant closer to our presentation may be found in \cite{Spiders}.

\begin{Theorem}\label{TheoWormsWO}
For any ordinal $\Lambda$, the relation $\wleq\eta$ is a pre-well-order on $\worms \Lambda$.
\end{Theorem}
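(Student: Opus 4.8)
The plan is to prove Theorem~\ref{TheoWormsWO} in two stages: first establish that $\wleq\eta$ is a total preorder on $\worms\Lambda$, and then establish the well-foundedness (every nonempty subset has a $\wleq\eta$-minimal element). Throughout I would actually work with $\wle{}$ (i.e.\ $\eta = 0$), since $\fv \wle\eta \fw$ unpacks to $\fw \vdash \la 0 \ra \fv$ regardless of $\eta$; the parameter $\eta$ is inert here and can be suppressed. Reflexivity of $\wleq{}$ holds by definition. Transitivity: if $\fw \vdash \la 0 \ra \fv$ and $\fv \vdash \la 0 \ra \fu$, then by rule~\ref{Rr:3} $\la 0 \ra \fv \vdash \la 0 \ra \la 0 \ra \fu$, and by axiom~\ref{RCtrans} $\la 0 \ra \la 0 \ra \fu \vdash \la 0 \ra \fu$, so rule~\ref{Rr:2} gives $\fw \vdash \la 0 \ra \fu$, i.e.\ $\fu \wle{} \fw$; mixing in the $\equiv$ cases is routine using rule~\ref{Rr:3} plus the fact that $\equiv$-equivalent worms are interchangeable.

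For totality I would argue by induction on the pair of worms, using their syntactic structure. Writing a nonempty worm as $\fw = \la \al \ra \fw'$ and $\fv = \la \be \ra \fv'$, I would split on whether $\al = \be$, $\al > \be$, or $\al < \be$. When $\al = \be$, I reduce (using monotonicity of $\la\al\ra$, rule~\ref{Rr:3}, and axiom~\ref{RCtrans}) to comparing $\fw'$ with $\fv'$ and invoke the induction hypothesis, being careful that $\la \al \ra \fw' \vdash \la 0 \ra \la \al \ra \fv'$ follows from $\la \al \ra \fw' \vdash \la \al \ra \fv'$ by axiom~\ref{RCmon} (since $\al \geq 0$) when $\al > 0$, and directly when $\al = 0$ via axiom~\ref{RCtrans}. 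When $\al > \be$, one shows $\fw \wgeq{} \fv$ essentially because a single top modality $\la\al\ra$ already dominates everything below it: from $\top \vdash \top$ and rule~\ref{Rr:3} one climbs to $\la \al \ra \fw' \vdash \la \al \ra \top$, then axiom~\ref{RCmon} gives $\la \al \ra \top \vdash \la 0 \ra \fv'$-type facts, and one patches $\la \be \ra$ back on; the case $\al < \be$ is symmetric. This is the part I expect to be the main obstacle: getting the $\wge{}$ direction to interact cleanly with the recursive structure of worms requires some care with how axioms~\ref{RCmon} and~\ref{RCconj} combine, and one likely needs an auxiliary lemma of the shape ``if $\al > \max\mathcal S(\fv)$ then $\la \al \ra \top \wge{} \fv$'' and more generally a normal-form / comparison lemma for worms, proved by simultaneous induction. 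Lemma~\ref{Lemma:OrdinalsBoundedByDerivability} will be useful here to control which ordinals can appear.

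For well-foundedness, the cleanest route is to exhibit an order-preserving map from $(\worms\Lambda, \wle{})$ into the ordinals. Concretely, I would define by recursion on worms an ordinal assignment $o(\fw)$ — for instance $o(\top) = 0$ and $o(\la\al\ra\fw') = $ an appropriate Veblen-style expression built from $\al$ and $o(\fw')$ (this is exactly Beklemishev's computation showing the order type is governed by Veblen functions) — and prove that $\fv \wle{} \fw$ implies $o(\fv) < o(\fw)$ and $\fv \equiv \fw$ implies $o(\fv) = o(\fw)$. Since the ordinals are well-ordered, any nonempty $S \subseteq \worms\Lambda$ has an element of least $o$-value, which is then $\wleq{}$-minimal in $S$. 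Combined with totality, this gives that $\wleq{}$ is a pre-well-order. Alternatively, if one wishes to avoid committing to the precise ordinal arithmetic, one can prove well-foundedness directly: supposing an infinite $\wge{}$-descending sequence exists, use Lemma~\ref{Lemma:OrdinalsBoundedByDerivability} to see the maximal ordinals appearing are nonincreasing, pass to a tail on which the leading modality is constant equal to some $\al$, peel it off, and descend to shorter worms, contradicting an outer induction on a suitable rank; I would cite \cite{Beklemishev:2005:VeblenInGLP} and \cite{Spiders} for the detailed bookkeeping rather than reproduce it.
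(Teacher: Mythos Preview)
The paper does not give a proof of this theorem at all: it simply attributes the result to Beklemishev \cite{Beklemishev:2005:VeblenInGLP} (with a variant in \cite{Spiders}) and moves on. Your final fallback---``cite \cite{Beklemishev:2005:VeblenInGLP} and \cite{Spiders} for the detailed bookkeeping''---is therefore exactly what the paper does, and had you led with that your proposal would match the paper's treatment.

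Where your proposal goes beyond the paper, however, there is a genuine error in the totality argument. Your plan is to compare $\la\al\ra\fw'$ and $\la\be\ra\fv'$ by casing on the \emph{leading} modalities and, when $\al=\be$, to ``reduce to comparing $\fw'$ with $\fv'$.'' This reduction fails: prepending $\la\al\ra$ is \emph{not} monotone for $\wleq{}$. Concretely, take $\fw'=\la 1\ra\top$ and $\fv'=\la 0\ra\la 1\ra\top$; then $o(\fw')=\omega<\omega+1=o(\fv')$, so $\fw'\wle{}\fv'$. But $\la 1\ra\fw'=\la 1\ra\la 1\ra\top$ has order type $e^1 2=\omega^2$, whereas $\la 1\ra\fv'=\la 1\ra\la 0\ra\la 1\ra\top$ has order type $o(\la 1\ra\top)+1+o(\la 1\ra\top)=\omega\cdot 2$, and $\omega\cdot 2<\omega^2$. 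So the comparison of $\fw'$ and $\fv'$ tells you nothing about the comparison of $\la 1\ra\fw'$ and $\la 1\ra\fv'$, and the induction does not go through. The correct recursive decomposition (used in the cited references and reflected in the {\sf BNF} definition here) splits a worm at its \emph{minimum} modality, not its first one: one writes $\fw=\fw_0\la\mu\ra\fw_1$ with $\mu=\min\mathcal S(\fw)$ and $\fw_0\in\worms{}^{\geq\mu+1}$, and compares recursively on the head $\fw_0$ (after shifting down by $\mu$) and the tail $\fw_1$. Your sketch of well-foundedness via an ordinal assignment is along the right lines and is essentially the content of Theorem~\ref{TheoTranOrder}, which the paper likewise imports from the same references.
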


This yields as a corollary a nice characterization of $\wleq {}$.

\begin{Corollary}\label{corLeq}
Given an ordinal $\Lambda$ and $A,B\in \worms\Lambda$, $A\wgeq {}B$ if and only if $A\vdash _{\RC} B$ or $A\vdash _{\RC} \la 0\ra B$.
\end{Corollary}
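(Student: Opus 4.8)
The statement is: $A \wgeq{} B$ iff $A \vdash B$ or $A \vdash \la 0 \ra B$. Let me unpack the left side: $A \wgeq{} B$ means $B \wleq{} A$, which by definition means $B \wle{} A$ (i.e. $A \vdash \la 0\ra B$) or $B \equiv A$. So the right-to-left and most of left-to-right directions are almost immediate: if $A \vdash \la 0 \ra B$ then $B \wle{} A$ so $A \wgeq{} B$; and if $A \vdash B$ and $B \vdash A$ then $B \equiv A$ so $A \wgeq{} B$. The substantive content is: if $A \vdash B$ but *not* $B \vdash A$, then we still get $A \wgeq{} B$ — i.e., $A \vdash \la 0\ra B$.

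The plan is to derive this from Theorem \ref{TheoWormsWO}, that $\wleq{}$ is a pre-well-order, hence in particular total. So given worms $A, B$ with $A \vdash B$, by totality either $A \wleq{} B$ or $B \wleq{} A$. If $B \wleq{} A$ we are done (this gives $A \wgeq{} B$ directly). So suppose $A \wle{} B$, i.e. $B \vdash \la 0 \ra A$. Combined with the hypothesis $A \vdash B$ and the $\RC$ rules, I want to conclude $B \vdash A$, which together with $A \vdash B$ yields $A \equiv B$, hence $A \wgeq{} B$, completing the left-to-right direction. The key derivation step: from $A \vdash B$ we get (by rule \ref{Rr:3}, monotonicity of $\la 0 \ra$) $\la 0 \ra A \vdash \la 0 \ra B$; chaining with $B \vdash \la 0 \ra A$ gives $B \vdash \la 0 \ra B$. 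Then using transitivity axiom \ref{RCtrans} ($\la 0 \ra \la 0 \ra B \vdash \la 0 \ra B$) — hmm, actually I want the reverse implication. The cleaner route: $B \vdash \la 0\ra A$ and $\la 0 \ra A \vdash \la 0\ra B \vdash \la 0 \ra A$ shows $\la 0 \ra A \equiv \la 0 \ra B$; but more useful is to note $A \wle{} B$ means $A$ is strictly below $B$ in the pre-well-order, so we cannot also have $B \wleq{} A$ unless they are $\equiv$; I should check whether $A \wle{} B$ together with $A \vdash B$ forces $A \equiv B$ using antisymmetry-like behavior. Actually the honest approach is: $\wleq{}$ being a pre-well-order, it is total, so $A \wleq{} B$ or $B \wleq{} A$; in the first case $A \vdash \la 0\ra B$ (if strict) or $A \equiv B$, and I claim $A \vdash \la 0 \ra B$ is impossible given $A \vdash B$ — no wait, that's not obviously impossible either.

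Let me reconsider. The clean statement: $A \wge{} B$ (strict) is known to be equivalent to $A \vdash \la 0 \ra B$ and $B \not\vdash \la 0 \ra A$, and it is also a strict order incompatible with $A \vdash B$; in the literature (Beklemishev, \cite{Spiders}) one has that $A \vdash B$ implies $A \wgeq{} B$ directly as a monotonicity fact about worms, and conversely $A \wge{} B$ implies $A \not\vdash B$. So the real proof is: ($\Leftarrow$) immediate from definitions and rule \ref{Rr:3}/\ref{Rr:2} as above. ($\Rightarrow$) Assume $A \wgeq{} B$, i.e. $A \equiv B$ or $A \wge{} B$. If $A \equiv B$ then $A \vdash B$. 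If $A \wge{} B$ then $A \vdash \la 0 \ra B$ by definition. Done — so in fact the corollary is nearly a restatement of the definition, with the only gap being that $\wgeq{}$ was defined as "$\wge{}$ or $\equiv$" and $\wge{}$ unpacks to a $\vdash$-statement. The main obstacle I anticipate is simply making sure the definition of $\wleq{}$ (which uses $\equiv$, i.e. two-way derivability) lines up correctly with the disjunction "$A \vdash B$ or $A \vdash \la 0\ra B$" — in particular that $A \equiv B$ (two-way) can be weakened to just $A \vdash B$ without loss, which follows because $A\vdash B$ together with totality of $\wleq{}$ forces either $B\wle{} A$ (done) or $A \wleq{} B$, and in the latter strict case $A\wle{} B$ would give $B\vdash\la0\ra A\vdash\la0\ra B$ (via rule \ref{Rr:3} on $A\vdash B$ then rule \ref{Rr:2}), and I'd need a separate lemma that no worm $B$ satisfies $B \vdash \la 0 \ra B$ — which is exactly irreflexivity of $\wge{}$, a consequence of Theorem \ref{TheoWormsWO} (well-foundedness rules out $B \wge{} B$). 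So the one genuinely load-bearing step is invoking well-foundedness of $\wleq{}$ to get irreflexivity of $\wge{}$.

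Concretely I would write:

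\begin{proof}
($\Leftarrow$) If $A \vdash B$ then $A \wgeq{} B$ by reflexivity of $\vdash$ applied after noting... actually: if $A \vdash \la 0 \ra B$ then $B \wle{} A$ by definition, so $A \wgeq{} B$. If $A \vdash B$, then by Theorem~\ref{TheoWormsWO} the relation $\wleq{}$ is total, so either $B \wleq{} A$, giving $A \wgeq{} B$ as desired, or $A \wle{} B$; in the latter case $B \vdash \la 0 \ra A$, and from $A \vdash B$ and rule~\ref{Rr:3} we get $\la 0 \ra A \vdash \la 0 \ra B$, hence by rule~\ref{Rr:2}, $B \vdash \la 0 \ra B$, i.e. $B \wge{} B$, contradicting well-foundedness of $\wleq{}$ (Theorem~\ref{TheoWormsWO}). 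So $A \wgeq{} B$. ($\Rightarrow$) If $A \wgeq{} B$ then either $A \equiv B$, whence $A \vdash B$, or $A \wge{} B$, whence $A \vdash \la 0 \ra B$ by definition.
\end{proof}

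That is the whole argument; no routine calculation beyond invoking the two $\RC$ rules and Theorem~\ref{TheoWormsWO}.
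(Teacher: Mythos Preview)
Your proof is correct and follows essentially the same route as the paper: the $(\Rightarrow)$ direction is immediate from the definition, and for $(\Leftarrow)$ the only nontrivial case is $A\vdash B$, which you handle by invoking totality of $\wleq{}$ to reduce to the case $A\wle{}B$, deriving $B\vdash\la 0\ra B$ via rules~\ref{Rr:3} and~\ref{Rr:2}, and contradicting well-foundedness from Theorem~\ref{TheoWormsWO}. This is exactly the paper's argument, with the rule applications spelled out slightly more explicitly.
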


\begin{proof}
By definition, $A\wgeq {}B$ if and only if $A\equiv _{\RC} B$ or $A\vdash _{\RC} \la 0\ra B$, so it remains to prove that $A\vdash _{\RC} B$ implies that $A\wgeq {}B$.
Otherwise, since $\wleq{}$ is a pre-well-order, $A\wle{}B$, so that $B \vdash _{\RC} \la 0\ra A  \vdash _{\RC} \la 0\ra B$, yielding $B \wle{} B$, and violating the well-foundedness of $\wle{}$.
\end{proof}

Note that $\wleq\eta$ fails to be a linear order merely because it is not antisymmetric.
To get around this, one may instead consider worms modulo provable equivalence.
Alternatively, as Beklemishev has done \cite{Beklemishev:2005:VeblenInGLP}, one can choose a canonical representative for each equivalence class.

\begin{Definition}[Beklemishev Normal Form] A worm $A \in \worms\Lambda$ is defined recursively to be in ${\sf BNF}$ if either
\begin{enumerate}
\item $A = \top$, or
\item $A := A_k \la \al \ra A_{k-1} \la \al \ra \ldots \langle \al \rangle A_0$ with
\begin{itemize}
\item $\al = \min \mathcal{S}(A)$;
\item $k \geq 1$;
\item $A_i \in \mathbb{W}_\Lambda^{\geq \al+1}$, for $i \leq k$;
\end{itemize}
such that $A_i \in {\sf BNF}$ and $A_i \vdash_{\RC} \la \al + 1  \ra A_{i+1}$ for each $i < k$.
\end{enumerate}

\end{Definition}

This definition essentially mirrors that of Cantor normal forms for ordinals.
The following was proven in \cite{Beklemishev:2005:VeblenInGLP}.

\begin{Theorem}\label{TheoBNF}
Given any worm $\fw$ there is a unique $\fw'\in \sf BNF$ such that $\fw \equiv_{\RC} \fw'$.
\end{Theorem}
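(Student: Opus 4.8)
The statement has two parts: \emph{existence} of a ${\sf BNF}$ worm equivalent to $\fw$, and \emph{uniqueness}. The plan is to prove existence by induction on the $\wleq{}$-rank of $\fw$ (which is well-defined by Theorem~\ref{TheoWormsWO}), and uniqueness by a separate induction showing that two ${\sf BNF}$ worms that are $\equiv_{\RC}$-equivalent must be syntactically identical.

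For \textbf{existence}, I would argue as follows. If $\fw \equiv_{\RC} \top$ we are done, so assume $\mathcal S(\fw)\neq\varnothing$ and let $\al = \min\mathcal S(\fw')$ for any candidate normal form; the natural guess is $\al = \min\mathcal S(\fw)$, but one must first check this is invariant under $\equiv_{\RC}$ — Lemma~\ref{Lemma:OrdinalsBoundedByDerivability} handles the maximum of the signature, and a symmetric argument (using axiom~\ref{RCmon} only lowers indices, etc.) should pin down that $\min\mathcal S$ is also an $\equiv_{\RC}$-invariant, or at least that $\fw$ is $\equiv_{\RC}$-equivalent to a worm whose minimal index is as small as possible. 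Granting that, write $\fw$ (up to equivalence) as $B_k\la\al\ra B_{k-1}\la\al\ra\cdots\la\al\ra B_0$ where each $B_i \in \worms\Lambda^{\geq\al+1}$ is a maximal block containing no occurrence of $\al$; this is just parsing the worm at its $\al$-occurrences. Each $B_i$ has strictly smaller $\wleq{}$-rank than $\fw$ (it is a proper subword built from larger ordinals, hence $\wle{}\fw$), so by the induction hypothesis each $B_i \equiv_{\RC} B_i'$ with $B_i'\in{\sf BNF}$. The remaining work is to enforce the monotonicity condition $B_i' \vdash_{\RC}\la\al+1\ra B_{i+1}'$: if it fails for some $i$, then by Corollary~\ref{corLeq} and Theorem~\ref{TheoWormsWO} the $\al{+}1$-parts are comparable the wrong way, and one uses the $\GLP$-style absorption identities for worms (available from the axioms~\ref{RCtrans}, \ref{RCconj}, and the standard fact that $\la\al\ra\la\al+1\ra C \equiv_{\RC} \la\al\ra C$ when combined appropriately, proved in \cite{Beklemishev:2005:VeblenInGLP,Spiders}) to \emph{merge} two consecutive blocks into one, strictly decreasing $k$. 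Iterating, we reach a worm satisfying all the ${\sf BNF}$ clauses, and the inner parts are already in ${\sf BNF}$ by the induction hypothesis.

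For \textbf{uniqueness}, suppose $A, C \in {\sf BNF}$ and $A\equiv_{\RC} C$; I would show $A = C$ by induction on the length of $A$ (or on $\wleq{}$-rank). If either is $\top$, the invariance of $\mathcal S$ under $\equiv_{\RC}$ forces both to be $\top$. Otherwise both have the same $\al = \min\mathcal S(A) = \min\mathcal S(C)$ (again by signature invariance), so write $A = A_k\la\al\ra\cdots\la\al\ra A_0$ and $C = C_\ell\la\al\ra\cdots\la\al\ra C_0$ in ${\sf BNF}$. The key lemma is that for worms in this shape the number of $\al$-blocks and the individual blocks are determined by the $\equiv_{\RC}$-class: comparing $A$ and $C$ using the consistency ordering at level $\al$, the ${\sf BNF}$ monotonicity condition $A_i\vdash\la\al+1\ra A_{i+1}$ makes the sequence $(A_i)$ strictly $\wleq{}$-decreasing, which is exactly the ``Cantor normal form'' uniqueness pattern — a strictly decreasing sum of terms is uniquely determined by its value. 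Formally one peels off the leading block: $A_k$ and $C_\ell$ are both the $\wleq{}$-maximal ``$\la\al+1\ra$-reflection-closed'' head of the common worm, hence $A_k\equiv_{\RC} C_k$ (so $k = \ell$ by a counting/rank argument and $A_k = C_k$ by the induction hypothesis, since $A_k$ is shorter than $A$), and then $A_{k-1}\la\al\ra\cdots\la\al\ra A_0 \equiv_{\RC} C_{k-1}\la\al\ra\cdots\la\al\ra C_0$, to which the induction hypothesis applies.

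The \textbf{main obstacle} is the merging step in the existence proof: showing precisely that when the ${\sf BNF}$ monotonicity condition fails, two adjacent $\al$-blocks can be absorbed into one while staying inside $\worms\Lambda^{\geq\al+1}$ and preserving the $\equiv_{\RC}$-class. This rests on the worm-manipulation identities of $\GLP$/$\RC$ (essentially that $\la\al\ra C \equiv_{\RC} \la\al\ra D$ whenever $C\wleq{\al+1}D\wleq{\al+1}\la\al\ra C$, i.e.\ the ``big-worm'' reductions), and care is needed because these must be invoked entirely within the strictly positive calculus $\RC$ rather than in full $\GLP$; fortunately the relevant reductions are already available from axioms~\ref{RCtrans}--\ref{RCconj} and the cited literature, so the argument goes through — but it is the one place where the bookkeeping is genuinely delicate rather than routine.
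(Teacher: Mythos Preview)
The paper does not give its own proof of this theorem; it is quoted from \cite{Beklemishev:2005:VeblenInGLP}. Your plan is essentially the argument found there (decompose at the least modality, normalise the blocks inductively, then sort/absorb to enforce monotonicity; prove uniqueness by peeling off the leading block, mirroring Cantor normal form uniqueness), so there is no real divergence to discuss.

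One concrete slip is worth flagging. The quantity $\min\mathcal S$ is \emph{not} an $\equiv_{\RC}$-invariant: for example $\langle 1\rangle\langle 0\rangle\top \equiv_{\RC} \langle 1\rangle\top$ (from $\langle 1\rangle\top\vdash\langle 1\rangle\top\wedge\langle 0\rangle\top$ and axiom~\ref{RCconj}), with minima $0$ and $1$ respectively. Your fallback ``equivalent to a worm whose minimal index is as small as possible'' points the wrong way, since the minimum can always be lowered by this trick but never raised. In the existence argument this does not matter: decompose at $\alpha=\min\mathcal S(\fw)$ regardless, and if the merging step happens to absorb \emph{all} of the $\langle\alpha\rangle$'s, the single surviving block is already in ${\sf BNF}$ by the inductive hypothesis on the blocks, so you are finished without needing to restart at a larger $\alpha$. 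In the uniqueness argument the claim $\min\mathcal S(A)=\min\mathcal S(C)$ for ${\sf BNF}$ worms $A\equiv_{\RC} C$ is true, but it comes from the order-type calculation (the minimum is recoverable from how $o(A)$ decomposes under $e^\alpha$), not from a syntactic signature lemma. With this correction your outline is sound and matches the cited proof.
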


It follows immediately that for any ordinal $\Lambda$, $(\worms \Lambda \cap {\sf BNF},\wleq{})$ is a well-order.

\section{Hyperexponential notation for $\Gamma_0$}\label{secHyper}

{Ordinal numbers} are canonical representatives of well-orders; we assume some basic familiarity with them, but a detailed account can be found in a text such as \cite{Jech:2002:SetTheory}.
In particular, since the set of worms modulo equivalence yields a well-order, we can use ordinal numbers to measure their order-types.
More generally, if $\mathfrak A=\langle A,\preccurlyeq\rangle$ is any pre-well-order, for $a\in A$ we may define a function $o\colon A\to \ord$ given recursively by
$o(a)=\sup_{b\prec a}(o(b)+1)$, where by convention $\sup \varnothing = 0$,
representing the {\em order-type} of $a$; this definition is sound since $\mathfrak A$ is pre-well-ordered.
The rank of $\mathfrak A$ is then defined as $\sup_{a\in A} (o(a) + 1)$.

The following lemma is useful in characterizing the rank function \cite{Spiders}.

\begin{Lemma}\label{LemmOUnique}

Let $\langle A,\preccurlyeq \rangle$ be a well-order. Then $o\colon A\to\ord$ is the unique function such that

\begin{enumerate}

\item $x\prec y$ implies that $o(x)<o(y)$,

\item\label{itOUniqueTwo} if $\xi < o(x)$ then $\xi = o(y)$ for some $y\in A$.

\end{enumerate}

\end{Lemma}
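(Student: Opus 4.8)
The plan is to split the argument into two halves: first verify that the rank function $o$ itself satisfies clauses (1) and (2), and then show that these two clauses pin $o$ down uniquely. Both halves are carried out by transfinite ($\prec$-)induction along the well-order $\preccurlyeq$, which is legitimate precisely because $\langle A,\preccurlyeq\rangle$ is well-ordered; we may also take for granted (as the excerpt notes) that $o$ is well-defined by the recursion $o(a)=\sup_{b\prec a}(o(b)+1)$.

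For clause (1): if $x\prec y$, then $o(x)+1$ is one of the terms in the supremum defining $o(y)$, so $o(x)+1\leq o(y)$ and hence $o(x)<o(y)$. For clause (2): I would prove by $\prec$-induction on $x$ that every $\xi<o(x)$ is of the form $o(y)$ for some $y\in A$. Since $o(x)=\sup_{b\prec x}(o(b)+1)$, from $\xi<o(x)$ we obtain some $b\prec x$ with $\xi\leq o(b)$; if $\xi=o(b)$ we are done, and if $\xi<o(b)$ the induction hypothesis applied to $b$ (which satisfies $b\prec x$) finishes the job.

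For uniqueness, suppose $f\colon A\to\ord$ also satisfies (1) and (2); I would show $f(x)=o(x)$ by $\prec$-induction on $x$, so assume $f(b)=o(b)$ for all $b\prec x$. The inequality $\sup_{b\prec x}(f(b)+1)\leq f(x)$ is immediate from (1), since $f(b)<f(x)$ for each $b\prec x$. For the reverse inequality, set $\xi=\sup_{b\prec x}(f(b)+1)$ and suppose toward a contradiction that $\xi<f(x)$; clause (2) then yields $y\in A$ with $f(y)=\xi$, and a case split on whether $y\prec x$, $y=x$, or $x\prec y$ produces a contradiction with (1) in each case. Hence $f(x)=\sup_{b\prec x}(f(b)+1)=\sup_{b\prec x}(o(b)+1)=o(x)$ by the induction hypothesis, completing the induction.

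The only place where real care is needed is the uniqueness direction, and specifically the trichotomy step: it is exactly there that we use that $\preccurlyeq$ is a genuine linear well-order rather than merely a pre-well-order, since ruling out a ``stray'' $y$ realizing the value $\xi$ relies on comparing $y$ with $x$. Everything else is a routine unwinding of the recursion defining $o$.
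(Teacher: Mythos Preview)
Your proof is correct. The paper itself does not prove this lemma; it merely states it with a citation to \cite{Spiders}, so there is no in-paper argument to compare against. Your approach is the standard one: verify that the recursively defined $o$ satisfies (1) and (2), then show uniqueness by $\prec$-induction using the trichotomy of the linear order.

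One minor imprecision worth noting: in the uniqueness step you say the trichotomy on $y$ ``produces a contradiction with (1) in each case,'' but the case $y\prec x$ actually contradicts the definition of $\xi$ directly (since then $f(y)+1\leq\xi=f(y)$), not clause~(1). Only the cases $y=x$ and $x\prec y$ genuinely invoke (1). This does not affect the validity of the argument. Your closing remark that linearity (rather than mere pre-well-orderedness) is what makes the trichotomy step work is exactly right, and is consistent with the paper's care in distinguishing well-orders from pre-well-orders.
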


In order to compute the ordinals $o(\fw)$, let us recall a notation system for $\Gamma_0$ using {\em hyperexponentials} \cite{FernandezJoosten:2012:Hyperations}.
The class of all ordinals will be denoted $\ord$, and $\omega$ denotes the first infinite ordinal. Recall that many number-theoretic operations such as addition, multiplication and exponentiation can be defined on the class of ordinals by transfinite recursion.
The ordinal exponential function $\xi \mapsto \omega ^\xi$ is of particular importance for representing ordinal numbers.
When working with order types derived from reflection calculi, it is convenient to work with a slight variation of this exponential.
\begin{Definition}[Exponential function]
The \emph{exponential function} is the function $e\colon \ord \to \ord$ given by \mbox{$\xi \mapsto -1 + \omega^\xi$}.
\end{Definition}

Observe that for $\xi = 0$, we have that $e\, \xi = -1 + \omega^0 = -1 + 1 =0$. The function $e$ is an example of a {\em normal function,} i.e.~$f\colon \ord \to \ord$ which is strictly increasing and {\em continuous,} in the sense that if $\lambda$ is a limit then $f(\lambda) = \sup_{\xi<\lambda} f(\xi)$.
Giving a mapping $f\colon X\to X$, it is natural and often useful to ask whether $f$ has {\em fixed points,} i.e., solutions to the equation $x=f(x)$. In particular, normal functions have many fixed points.

\begin{Proposition}\label{fixedpoints}
Every normal function $f\colon \ord\to\ord$ has arbitrarily large fixed points.
The least fixed point of $f$ greater or equal than $\alpha$ is given by $\lim_{n\to\infty} f^n \alpha $.
\end{Proposition}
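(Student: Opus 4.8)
The plan is to prove both claims by standard ordinal-recursive arguments exploiting normality of $f$.

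\medskip

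For the first claim, that $f$ has arbitrarily large fixed points, I would fix an arbitrary $\alpha \in \ord$ and produce a fixed point $\geq \alpha$; this simultaneously proves the second claim if I do it via the iteration $\lim_{n\to\infty} f^n\alpha$. First I would observe that since $f$ is strictly increasing it satisfies $f(\xi)\geq\xi$ for all $\xi$ (a quick transfinite induction: if $f(\eta)\geq\eta$ for all $\eta<\xi$, then for successor $\xi=\eta+1$ we get $f(\xi)>f(\eta)\geq\eta$ so $f(\xi)\geq\xi$, and for limit $\xi$ we get $f(\xi)\geq f(\eta)\geq\eta$ for all $\eta<\xi$, hence $f(\xi)\geq\xi$). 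Consequently the sequence $\alpha\leq f\alpha\leq f^2\alpha\leq\cdots$ is non-decreasing, so its supremum $\lambda:=\lim_{n\to\infty}f^n\alpha=\sup_n f^n\alpha$ exists and $\lambda\geq\alpha$.

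\medskip

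Next I would show $f(\lambda)=\lambda$. There are two cases. If the sequence $(f^n\alpha)_{n\in\omega}$ is eventually constant, say $f^{n+1}\alpha=f^n\alpha$ for some $n$, then $\lambda=f^n\alpha$ is already a fixed point and we are done. Otherwise the sequence is strictly increasing cofinally often, so $\lambda$ is a limit ordinal and $\lambda=\sup_n f^n\alpha$ with the $f^n\alpha$ forming a cofinal strictly increasing (after passing to a subsequence) $\omega$-sequence; by continuity of $f$ at the limit $\lambda$ we get $f(\lambda)=\sup_{\xi<\lambda}f(\xi)$. Since the $f^n\alpha$ are cofinal in $\lambda$, this sup equals $\sup_n f(f^n\alpha)=\sup_n f^{n+1}\alpha=\lambda$. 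Hence $f(\lambda)=\lambda$ in both cases.

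\medskip

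For the ``least'' part of the second claim I would argue by contradiction or by a direct induction: suppose $\beta$ is any fixed point with $\beta\geq\alpha$; I claim $f^n\alpha\leq\beta$ for all $n$, whence $\lambda=\sup_n f^n\alpha\leq\beta$, so $\lambda$ is the least fixed point $\geq\alpha$. The claim follows by induction on $n$: $f^0\alpha=\alpha\leq\beta$ by hypothesis, and if $f^n\alpha\leq\beta$ then applying the monotone $f$ gives $f^{n+1}\alpha\leq f(\beta)=\beta$. The only mild subtlety — the main point to be careful about rather than a genuine obstacle — is the eventually-constant versus strictly-increasing case split and the correct invocation of continuity (which only applies at limit ordinals), so I would make sure to handle the case where the iteration stabilizes at a successor stage separately rather than blindly appealing to continuity.
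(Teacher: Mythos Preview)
Your proof is correct and is the standard argument for this folklore result. The paper itself states the proposition without proof, so there is no ``paper's own proof'' to compare against; your argument would serve perfectly well as the omitted justification.
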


The first ordinal $\alpha$ such that $\alpha = \omega^\alpha$ is the limit of the $\omega$-sequence\linebreak $(\omega, \omega^\omega, \omega^{\omega^\omega}, \hdots)$, and is usually denoted $\varepsilon_0$.
Every $\xi <\varepsilon_0$ can be written in terms of $0$ using only addition and the function $\omega \mapsto \omega^\xi$ via its {Cantor normal form.}
The hyperexponential function is then a natural transfinite iteration of the ordinal exponential which remains normal after each iteration.

\begin{Definition}[Hyperexponential functions]
The \emph{hyperexponential functions}  $(e^\zeta)_{\zeta \in \ord}$ are the unique family of normal functions that satisfy
\begin{enumerate}
\item $e^0 = \textup{id}$,
\item\label{DefHypI} $e^1 = e$,
\item\label{DefHypII} $e^{\alpha + \beta} = e^\alpha \circ e^\beta$ for all $\alpha$ and $\beta$, and
\item if $(f^\xi)_{\xi\in\ord}$ is a family of functions satisfying \ref{DefHypI} and \ref{DefHypII}, then for all $\alpha,\beta\in\ord$, $e^\alpha\beta\leq f^\alpha\beta$.
\end{enumerate}
\end{Definition}

Fern\'andez-Duque and Joosten proved that the hyperexponentials are well-defined \cite{FernandezJoosten:2012:Hyperations}.
If $\alpha>0$ then $e^\alpha \beta$ is always {\em additively indecomposable} in the sense that $\xi,\zeta < e^\alpha \beta$ implies that $\xi + \zeta < e^\alpha \beta$; note that zero is additively indecomposable according to our definition.
In \cite{Spiders} it is also shown that the function $\xi \mapsto e^\xi 1$ is itself a normal function, hence it has a least non-zero fixed point: this fixed point is the Feferman-Sch\"utte ordinal, $\Gamma_0$.
Just like ordinals below $\varepsilon_0$ may be written using $0$, addition, and $\omega$-exponentiation, every ordinal below $\Gamma_0$ may be written in terms of $0$, $1$, addition and the function $(\xi,\zeta) \mapsto e^\xi \zeta$.
The following was first proven in \cite{Beklemishev:2005:VeblenInGLP} with different notation, and in the current form in \cite{FernandezJoosten:2012:Hyperations}.

\begin{Theorem}\label{TheoTranOrder}
Let ${A},B $ be worms and $\alpha $ be an ordinal. Then,
\begin{enumerate}

\item $o(\top)=0$,

\item $o(B \la 0 \ra A) ={o(A)}+1+o({B}),$ and\label{TheoTranOrderItSum}

\item $o(\alpha\uparrow A) =e^\alpha {o({A})}.$\label{TheoTranOrderItThree}

\end{enumerate}
\end{Theorem}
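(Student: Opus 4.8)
The plan is to prove all three identities by a simultaneous transfinite induction, exploiting the uniqueness characterization of the rank function from Lemma~\ref{LemmOUnique}. The first clause, $o(\top)=0$, is immediate since $\top$ is the $\wle{}$-least worm: no worm $\fv$ satisfies $\top\vdash\la 0\ra\fv$, so $\sup\varnothing=0$. For the remaining two, rather than computing $o$ directly from its recursive definition, I would fix the candidate function $\tilde o$ defined by the three stated equations (on worms in $\sf BNF$, using Theorem~\ref{TheoBNF} to get a representative, and checking invariance under $\equiv_{\RC}$) and verify that $\tilde o$ satisfies the two conditions of Lemma~\ref{LemmOUnique} on the well-order $(\worms\Lambda\cap{\sf BNF},\wleq{})$; by uniqueness this forces $\tilde o=o$.

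The core combinatorial input is Corollary~\ref{corLeq}: $\fw\wgeq{}\fv$ iff $\fw\vdash_{\RC}\fv$ or $\fw\vdash_{\RC}\la 0\ra\fv$. First I would establish the needed monotonicity and surjectivity facts at the level of the hyperexponential calculus. For clause~\ref{TheoTranOrderItThree}, the key is that $\al\uparrow(-)$ is an order isomorphism from $\worms\Lambda^{\geq 0}$ (suitably) onto $\worms\Lambda^{\geq\al}$ with respect to $\wleq{}$, matching the fact that $e^\al$ is the normal (hence order-preserving and, on its range, ``surjective enough'') function; combined with the inductive hypothesis $o(\fw)=e^\al o(\fw)$ being replaced by $o(\al\uparrow\fw)=e^\al o(\fw)$, one checks the two Lemma~\ref{LemmOUnique} conditions: strict monotonicity follows because $\fv\wle{}\fw$ iff $\al\uparrow\fv\wle{}\al\uparrow\fw$ (a fact about $\RC$ derivability under shifting all modalities up by $\al$, which is sound because of axioms~\ref{RCmon}, \ref{RCconj} and the rules), and the ``no gaps'' condition follows from additive indecomposability of $e^\al\beta$ together with the inductive description of which ordinals are hit. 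For clause~\ref{TheoTranOrderItSum}, I would analyze worms of the form $\fv\la 0\ra\fw$ where $\fw\in\worms\Lambda^{\geq 1}$: here $\wle{}$ below $\fv\la 0\ra\fw$ splits into the worms $\peq \fw$ living ``above the zero'' and then, past $o(\fw)+1$, copies governed by the part below $\fv$; the $+1$ is exactly the contribution of $\fw$ itself sitting at the bottom as a strict $\wle{}$-predecessor that is not itself of smaller form. The identity $o(\fv\la 0\ra\fw)=o(\fw)+1+o(\fv)$ is then read off by verifying monotonicity (using that $\la 0\ra$ is strictly monotone and that concatenation on the left is monotone in $\fv$, again via Corollary~\ref{corLeq} and axiom~\ref{RCconj}) and the gap-freeness condition, splitting into the cases $\xi\leq o(\fw)$, $\xi=o(\fw)$... wait, $\xi < o(\fw)$, then $\xi = o(\fw)$, then $o(\fw) < \xi < o(\fw)+1+o(\fv)$, in each case exhibiting the witnessing worm.

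The main obstacle I anticipate is the bookkeeping in clause~\ref{TheoTranOrderItSum}: proving that every ordinal $\xi<o(\fw)+1+o(\fv)$ is realized as $o$ of some worm $\wle{}$-below $\fv\la 0\ra\fw$ requires a careful decomposition of an arbitrary worm $\fu\wle{}\fv\la 0\ra\fw$ according to the position of its last (leftmost?) occurrence of $0$, and this is where one needs to invoke the Beklemishev normal form machinery of Theorem~\ref{TheoBNF} to put $\fu$ in a canonical shape $\fu_k\la 0\ra\fu_{k-1}\la 0\ra\cdots\la 0\ra\fu_0$ with each $\fu_i\in\worms\Lambda^{\geq 1}$ and reduce via the inductive hypothesis applied to the $\fu_i$ (which have strictly smaller ``height'') and clause~\ref{TheoTranOrderItThree} applied with $\al=1$. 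Handling limit worms and the continuity of $e^\al$ uniformly with the successor steps is the other delicate point, but it is forced by Proposition~\ref{fixedpoints} and the definition of normal function, so it should go through once the successor case is set up correctly. Throughout, I would suppress the routine verifications that the candidate $\tilde o$ is well-defined on $\equiv_{\RC}$-classes and that all the ``shifting'' operations on worms are sound and faithful with respect to $\vdash_{\RC}$, citing Lemma~\ref{Lemma:OrdinalsBoundedByDerivability} where the signature constraints are needed.
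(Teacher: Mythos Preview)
The paper does not supply its own proof of this theorem: it is imported verbatim from \cite{Beklemishev:2005:VeblenInGLP} and, in the present hyperexponential form, from \cite{FernandezJoosten:2012:Hyperations} and \cite{Spiders}. So there is no in-paper argument to compare against; the relevant benchmark is the proof in those references, and your outline via the uniqueness characterization of Lemma~\ref{LemmOUnique} is precisely the route taken there.

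A couple of points where your sketch drifts from the clean version. First, the isomorphism you need for clause~\ref{TheoTranOrderItThree} is not that $\alpha\uparrow(\cdot)$ respects $\wleq{}$ on both sides, but that it is an order-isomorphism from $(\worms\Lambda,\wleq{})$ onto $(\worms\Lambda^{\geq\alpha},\leq_\alpha)$, where $\fv\leq_\alpha\fw$ is defined using $\la\alpha\ra$ in place of $\la 0\ra$; one then separately checks that the $\leq_\alpha$-rank on $\worms\Lambda^{\geq\alpha}$ equals the $\wleq{}$-rank composed with $e^\alpha$. Your phrasing ``$\fv\wle{}\fw$ iff $\al\uparrow\fv\wle{}\al\uparrow\fw$'' is false as stated (shifting up can collapse $\wle{}$-distinctions coming from low modalities), so this step needs the intermediate ordering. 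Second, in clause~\ref{TheoTranOrderItSum} you impose $\fw\in\worms\Lambda^{\geq 1}$, but the statement is for arbitrary $\fw$; the standard decomposition is rather to first show $o(\la 0\ra\fw)=o(\fw)+1$ (immediate, since $\fu\wle{}\la 0\ra\fw$ iff $\fu\wleq{}\fw$) and then to show that for $\fv\in\worms\Lambda^{\geq 1}$ prepending $\fv$ to any worm beginning with $\la 0\ra$ adds $o(\fv)$ on top, using axiom~\ref{RCconj} for monotonicity in $\fv$ and the induction hypothesis on $1\uparrow$-shifted worms for surjectivity. With those two corrections your plan matches the cited proofs.
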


While hyperexponential notation is more convenient for computing order types of worms, it can easily be translated back and forth into notation based on Veblen functions.
Given an ordinal $\alpha$, recall that $\upphi_\alpha$ is defined recursively so that $\upphi_0\beta:=\omega^\beta$ and for $\alpha>0$, $\upphi_\al\be$ is the $\beta$-th member of $\{\eta:(\forall\xi<\al) [\upphi_\xi \eta=\eta]\}$. Then, $\Gamma_0$ is the first non-zero ordinal closed under $(\alpha,\beta)\mapsto \upphi_\alpha\beta$.
We then have the following equivalences \cite[Proposition 5.15]{Spiders}:

\begin{Proposition}\label{propEvsPhi}
Given ordinals $\al, \be$:
\begin{enumerate}
\item $e^\al(0) = 0$;
\item $e^1(1 +\be) = \upphi_0(1 +\be)$;
\item $e^{\omega^{1+\al}}(1 +\be) =\upphi_{1+\al}(\be)$.
\end{enumerate}
\end{Proposition}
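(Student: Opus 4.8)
The plan is to verify the three identities in sequence, using the characterization of hyperexponentials via their defining clauses together with the recursive definition of the Veblen functions and the fixed-point machinery from Proposition~\ref{fixedpoints}. The first identity is essentially immediate: by the additive-indecomposability remark following the definition of hyperexponentials, $e^\al\beta$ is additively indecomposable for $\al>0$, and more directly one checks from $e^0=\mathrm{id}$, $e^1\xi=-1+\omega^\xi$, and $e^{\al+\be}=e^\al\circ e^\be$ that $e^\al(0)=0$ for every $\al$ --- formally by transfinite induction on $\al$, using continuity at limits (a sup of zeros is zero) and $e^{\al+1}(0)=e^\al(e(0))=e^\al(0)$ at successors. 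The second identity, $e^1(1+\be)=\upphi_0(1+\be)$, unwinds to $-1+\omega^{1+\be}=\omega^{1+\be}$, which holds because $\omega^{1+\be}$ is a limit ordinal (indeed $\geq\omega$), hence equals $-1+\omega^{1+\be}$; and $\upphi_0(1+\be)=\omega^{1+\be}$ by definition of $\upphi_0$.

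The third identity, $e^{\omega^{1+\al}}(1+\be)=\upphi_{1+\al}(\be)$, is the substantive one, and I would prove it by transfinite induction on $\al$, with an inner induction on $\be$. The base case $\al=0$ asks for $e^\omega(1+\be)=\upphi_1(\be)$. Here $\upphi_1(\be)$ is the $\be$-th fixed point of $\upphi_0=(\xi\mapsto\omega^\xi)$, equivalently the $\be$-th $\varepsilon$-number; and by Proposition~\ref{fixedpoints} the least fixed point of $e=e^1$ above a given ordinal is $\lim_n e^n$ of it, while $e^\omega=\lim_n e^n$ by continuity of the hyperexponential family in the exponent (clause for limits) --- so $e^\omega(1+\be)$ should be recognized as the $\be$-th nonzero value closed under $e$, matching the shifted indexing by $1+\be$ against the fact that $e$ fixes $0$. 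The successor step $\al\to\al+1$ uses $e^{\omega^{1+(\al+1)}}=e^{\omega^{1+\al}\cdot\omega}=\lim_n (e^{\omega^{1+\al}})^n$ (additivity plus continuity), which by Proposition~\ref{fixedpoints} iterates $e^{\omega^{1+\al}}$ to reach its fixed points; by the induction hypothesis these are exactly the fixed points of $\upphi_{1+\al}$, i.e.\ the values of $\upphi_{1+\al+1}$, again modulo the shift in the argument. The limit step for $\al$ follows from joint continuity of $\zeta\mapsto e^\zeta\xi$ and the definition of $\upphi_\lambda$ at limit $\lambda$ as enumerating $\bigcap_{\xi<\lambda}\{\eta:\upphi_\xi\eta=\eta\}$.

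The main obstacle I anticipate is bookkeeping the $1+(\cdot)$ shifts correctly so that the ``$\be$-th fixed point'' indexing on the Veblen side lines up with the hyperexponential side, where $e$ and all $e^{\omega^{1+\al}}$ pin $0$ to $0$; this off-by-the-first-element discrepancy is precisely why the statement has $1+\be$ on the left but bare $\be$ inside $\upphi$. Making the induction on $\be$ explicit --- checking that $e^{\omega^{1+\al}}(1+\be)$ is strictly increasing and continuous in $\be$, skips no ordinal (Lemma~\ref{LemmOUnique}-style uniqueness of an enumerating function), and takes the fixed-point value at each successor stage --- will pin this down, but it is where care is required rather than any deep idea. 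Everything else reduces to the defining clauses of the hyperexponentials and Proposition~\ref{fixedpoints}.
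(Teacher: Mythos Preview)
The paper does not prove this proposition; it is quoted from \cite[Proposition~5.15]{Spiders}, so there is no in-paper argument to compare against. Your outline follows the natural route one would take to reconstruct the proof: items (1) and (2) unwind directly from the definitions, and item (3) goes by transfinite induction on $\al$, identifying $e^{\omega^{1+\al}}$ with the enumerating function of the common fixed points of the earlier stages, which is precisely how $\upphi_{1+\al}$ is defined. The off-by-one bookkeeping you flag---arising because $0$ is a spurious fixed point of every $e^\zeta$, so the $\be$-th nonzero value of $e^{\omega^{1+\al}}$ lands at argument $1+\be$---is indeed the only place requiring care rather than a new idea.

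One point to tighten: you repeatedly invoke ``continuity of the hyperexponential family in the exponent (clause for limits)'' as though it were part of the definition, but the definition recorded in the paper has no limit clause---only $e^0=\mathrm{id}$, $e^1=e$, the cocycle identity $e^{\al+\be}=e^\al\circ e^\be$, and pointwise minimality. That $\zeta\mapsto e^\zeta\xi$ is normal (in particular that $e^\lambda\xi=\sup_{\zeta<\lambda}e^\zeta\xi$ at limit $\lambda$) is a \emph{consequence} of the minimality clause, established in \cite{FernandezJoosten:2012:Hyperations}, not an axiom you may simply cite. Your limit steps in both the induction for (1) and the outer induction for (3) rest on this fact, so you should either derive it or cite it explicitly. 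Once that lemma is available, the remainder of your plan goes through.
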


Finally we mention a useful property of $o$ proven in \cite{Spiders}.

\begin{Lemma}\label{LemmLowerBound}
Let $\fw\not=\top$ be a worm and $\mu$ an ordinal. Moreover, let $\al$ be the greatest ordinal appearing in $A$. Then, 
\begin{enumerate}

\item  if $\mu\leq\al$, then $ o( \langle \mu \rangle \top)\leq o(\fw)$, and\label{LemmLowerBoundItOne}

\item if $\al<\mu$, then $o(\fw)<o( \langle \mu \rangle \top)$.\label{LemmLowerBoundItTwo}

\end{enumerate}

\end{Lemma}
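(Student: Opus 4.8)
The plan is to prove both parts by induction on the worm $\fw$, using the two computational rules of Theorem~\ref{TheoTranOrder} (the ``sum'' rule \ref{TheoTranOrderItSum} and the ``$\uparrow$'' rule \ref{TheoTranOrderItThree}) together with the monotonicity of the hyperexponentials and the fact (from the remarks after the definition of hyperexponentials) that $e^\mu\xi$ is additively indecomposable whenever $\mu>0$. First I would record the base case: if $\fw=\la\al\ra\top$ is a single modality, then Theorem~\ref{TheoTranOrder} gives $o(\fw)=o(\al\uparrow\top)=e^\al\cdot o(\top)$ only after rewriting $\fw$ as $\al\uparrow\top$ wait—more directly, $o(\la\al\ra\top)=o(\top\la 0\ra(\,(\al)\uparrow\top\,))$ is awkward, so instead I would note $\la\al\ra\top = \al\uparrow\la 0\ra\top$, hence $o(\la\al\ra\top)=e^\al\,o(\la 0\ra\top)=e^\al\,1$; this is $0$ when $\al=0$ and strictly increasing in $\al$ since $\xi\mapsto e^\xi 1$ is normal. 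So for $\mu\le\al$ we get $o(\la\mu\ra\top)\le o(\la\al\ra\top)$ and for $\mu>\al$ the inequality is strict, settling the base case.

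For the inductive step, write $\fw$ in Beklemishev normal form with least modality $\be=\min\mathcal S(\fw)$, so $\fw=\be\uparrow\fw^-$ where $\fw^-$ is a worm whose least modality is $0$; thus $o(\fw)=e^\be\,o(\fw^-)$. I would split $\fw^-$ one step further as $\fw^- = \fv\la 0\ra\fu$ with $o(\fw^-)=o(\fu)+1+o(\fv)$. The greatest ordinal $\al$ appearing in $\fw$ is the greatest ordinal appearing in $\fv$ or $\fu$ (shifted by $\be$ appropriately—actually since $\fw=\be\uparrow\fw^-$, the ordinals of $\fw$ are exactly $\be+\gamma$ for $\gamma$ an ordinal of $\fw^-$, so $\al=\be+\al^-$ where $\al^-$ is the greatest ordinal of $\fw^-$). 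For part~\ref{LemmLowerBoundItOne}, given $\mu\le\al$: if $\mu<\be$ then $o(\la\mu\ra\top)=e^\mu 1 < e^\be 1\le e^\be\,o(\fw^-)=o(\fw)$ using normality of $\xi\mapsto e^\xi 1$ and $o(\fw^-)\ge 1$; if $\mu\ge\be$, write $\mu=\be+\mu^-$ with $\mu^-\le\al^-$, so by the inductive hypothesis applied to $\fw^-$ we have $o(\la\mu^-\ra\top)\le o(\fw^-)$, and applying the normal function $e^\be$ and using $o(\la\mu\ra\top)=o(\be\uparrow\la\mu^-\ra\top)=e^\be\,o(\la\mu^-\ra\top)$ we conclude $o(\la\mu\ra\top)\le o(\fw)$. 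For part~\ref{LemmLowerBoundItTwo}, given $\mu>\al$: then $\mu>\be$ (since $\al\ge\be$ when $\fw\neq\top$, as $\be$ itself occurs in $\fw$), so $\mu=\be+\mu^-$ with $\mu^->\al^-$, and the inductive hypothesis gives $o(\fw^-)<o(\la\mu^-\ra\top)$; applying the strictly increasing function $e^\be$ yields $o(\fw)=e^\be\,o(\fw^-)<e^\be\,o(\la\mu^-\ra\top)=o(\la\mu\ra\top)$.

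I expect the main obstacle to be bookkeeping rather than conceptual: correctly tracking how the least and greatest modalities of $\fw$ relate to those of $\fw^-$ under the $\be\uparrow(\cdot)$ operation, and handling the boundary case $\mu=\be$ uniformly (where $\mu^-=0$ and one must check $o(\la 0\ra\top)=1\le o(\fw^-)$, which holds since $\fw^-\neq\top$). One should also be careful that the decomposition $\fw=\be\uparrow\fw^-$ is legitimate—this follows from Definition~\ref{defUparrow} together with the observation that subtracting $\be$ from every entry of a worm all of whose entries are $\ge\be$ yields a genuine worm—and that Theorem~\ref{TheoTranOrder}\ref{TheoTranOrderItThree} applies to give $o(\fw)=e^\be o(\fw^-)$. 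Once these structural points are pinned down, each inequality reduces to a one-line application of normality or additive indecomposability of the hyperexponentials.
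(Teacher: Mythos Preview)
The paper does not actually prove this lemma; it is simply quoted from \cite{Spiders}. Your argument has all the right ingredients (Theorem~\ref{TheoTranOrder}, normality of $e^\beta$, additive indecomposability of $e^{\mu}1$), but the induction as written is not well-founded. You apply the inductive hypothesis to $\fw^-$, where $\fw = \beta \uparrow \fw^-$ with $\beta = \min \mathcal S(\fw)$. However, $\fw^-$ has the same length as $\fw$; its order type need not be smaller (take $\fw = \langle \omega \rangle \langle 1 \rangle \top$, $\beta = 1$, $\fw^- = \langle \omega \rangle \langle 0 \rangle \top$: both have $o = \varepsilon_0$); and its maximum ordinal $\alpha^-$ need not be smaller than $\alpha$ (same example, since $1 + \omega = \omega$). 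In particular, when $\beta = 0$ you have $\fw^- = \fw$ outright, and the appeal to the hypothesis is circular.

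The repair is to actually use the splitting $\fw^- = \fv \langle 0 \rangle \fu$ that you set up but never invoke: induct on the length of $\fw$ and apply the hypothesis to the strictly shorter worms $\fv$ and $\fu$, not to $\fw^-$. For part~\ref{LemmLowerBoundItOne} with $\mu \ge \beta$, the maximum $\alpha^-$ of $\fw^-$ occurs in $\fv$ or in $\fu$; either way the hypothesis gives $o(\langle \mu^- \rangle \top) \le o(\fv)$ or $\le o(\fu)$, both bounded by $o(\fw^-)$, and then apply $e^\beta$. For part~\ref{LemmLowerBoundItTwo}, the hypothesis on $\fv$ and $\fu$ yields $o(\fv), o(\fu) < e^{\mu^-} 1$, and now the additive indecomposability you mention (but never deploy) gives $o(\fw^-) = o(\fu) + 1 + o(\fv) < e^{\mu^-} 1$; apply $e^\beta$ once more. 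One small slip: $o(\langle 0 \rangle \top) = e^0 1 = 1$, not $0$.
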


\section{Beklemishev's bracket notation system for $\Gamma_0$}

Before we introduce the full bracket calculus, let us review Beklemishev's notation system from \cite{Beklemishev:2005:VeblenInGLP}.

\begin{Definition}
By $\wormsBC$ we denote the smallest set such that:
\begin{enumerate*}
\item $\top \in \wormsBC$;
\item if $a, \, b \in \wormsBC$, then $\ob a \cb b \in \wormsBC$.
\end{enumerate*}
\end{Definition}
By convention we shall write $\ob \cb a$, for $a \in \wormsBC$, to the denote $\ob \top \cb a \in \wormsBC$. We shall also omit the use of $\top$ at the end of any worm $a\neq \top$, e.g., we shall use $\ob a_1 \cb \ldots \ob a_k \cb$ to denote $\ob a_1 \cb \ldots \ob a_k \cb \top$.
\medskip

We can define a translation $\ast: \wormsBC \to \worms\Lambda$ in such a way that an element $a \in \wormsBC$ will denote the ordinal $o(a^\ast)$:
\begin{enumerate}
\item $\top^\ast = \top$
\item $\big( \, \ob a \cb b \, \big)^\ast = \la o( a^\ast)\ra b^\ast$. 
\end{enumerate}
Therefore, we can also define $o^\ast : \wormsBC \to \ord$ as $o^\ast(a) = o(a^\ast)$.

\begin{Example}
We have that $o^\ast (\ob\ob\ob\cb\cb\cb) = \varepsilon_0$.
To see this, first note that $\ob\cb^\ast = \langle 0\rangle \top$, and $o(\langle 0\rangle \top) = 1$, where the calculation is performed using Theorem \ref{TheoTranOrder} in the `degenerate' case where $A=B=\top$.
It follows that $  \ob\ob\cb\cb^*= \langle o(\langle 0 \rangle \top) \rangle\top = \langle 1\rangle \top $, so that $o^* (\ob\ob\cb\cb) = o(\langle 1\rangle \top) = e^1 o(\langle 0 \rangle \top) = e^1 1 = \omega$.
By similar reasoning, $o^*(\ob\ob\ob\cb\cb\cb ) = o(\langle \omega\rangle \top) = e^\omega 1 = \upphi_1 0 =\varepsilon_0 $, where the second-to-last equality uses Proposition \ref{propEvsPhi} and the last is the definition of $\varepsilon_0$.
\end{Example}

In fact, $o^\ast : \wormsBC \to \Gamma_0$, and this map is surjective.
In order to prove this, we make some observations about how the ordinals represented by worms in $\wormsBC$ can be bounded in terms of the maximum number of nested brackets occurring in them. For this purpose, we introduce the following two definitions.

\begin{Definition}
For $a \in \wormsBC$, we define the nesting of $a$, ${\sf N}(a)$, as the maximum number of nested brackets. That is:
\begin{enumerate}
\item ${\sf N}(\top) = 0$;
\item ${\sf N}(\ob a \cb b) = \max\big({\sf N}(a) + 1, \, {\sf N}(b) \big)$. 
\end{enumerate} 
\end{Definition}

\begin{Definition}\label{defHFun}
We recursively define the function $h : \mathbb{N} \to \Gamma_0$ as follows:
\begin{enumerate}
\item $h(0) = 0$;
\item $h(n+1) = e^{h(n)} 1$.
\end{enumerate}
\end{Definition}

Note that $h$ is a strictly monotone function.
Using Proposition \ref{fixedpoints}, we see that $\lim_{n\to \infty} h(n) = \Gamma_0$.
In the following proposition we can find upper and lower bounds for any ordinal $o^\ast(a)$, with $a \in \wormsBC$, in terms of the nesting of $a$.

\begin{Proposition}
\label{proposition:nestedn}
For $a \in \wormsBC$, if ${\sf N}(a) = n $, then $h (n) \leq o^\ast(a) < h(n + 1)$.
\end{Proposition}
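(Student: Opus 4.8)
The plan is to prove both inequalities simultaneously by induction on the structure of $a \in \wormsBC$, using the definition of ${\sf N}$ and the recursion for $o^\ast$ together with Theorem~\ref{TheoTranOrder} to track how each bracket operation affects the represented ordinal. The base case $a = \top$ is immediate: ${\sf N}(\top) = 0$, $o^\ast(\top) = o(\top) = 0 = h(0)$, and $0 < h(1) = e^0 1 = 1$.

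For the inductive step, write $a = \ob b \cb c$, so that $a^\ast = \la o^\ast(b) \ra c^\ast$ and ${\sf N}(a) = \max({\sf N}(b) + 1, {\sf N}(c))$. The key is to analyze $o^\ast(a) = o(\la o^\ast(b)\ra c^\ast)$. First I would handle the contribution of the outer modality: by Lemma~\ref{LemmLowerBound} applied to the worm $\la o^\ast(b) \ra c^\ast$ (comparing the exponent $o^\ast(b)$ against the greatest ordinal appearing in $c^\ast$, which is at most $h({\sf N}(c))$ by the induction hypothesis on $c$), together with the computation $o(\la \mu \ra \top) = e^\mu 1$, one gets that $o^\ast(a) \geq e^{o^\ast(b)} 1$. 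Combining with the induction hypothesis $o^\ast(b) \geq h({\sf N}(b))$ and monotonicity of $e^{(\cdot)} 1$, this yields $o^\ast(a) \geq e^{h({\sf N}(b))} 1 = h({\sf N}(b) + 1)$. On the other hand $o^\ast(a) \geq o(c^\ast) \geq h({\sf N}(c))$ since prepending a modality cannot decrease the order type (a consequence of Theorem~\ref{TheoTranOrder} via the $\wleq{}$ ordering). Taking the maximum gives the lower bound $o^\ast(a) \geq h({\sf N}(a))$.

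For the upper bound, I would argue that $o^\ast(a) < h({\sf N}(a) + 1) = e^{h({\sf N}(a))} 1$. Put $n = {\sf N}(a)$, so ${\sf N}(b) \leq n - 1$ and ${\sf N}(c) \leq n$; by induction $o^\ast(b) < h(n)$ and $o^\ast(c) < h(n+1)$. Using Theorem~\ref{TheoTranOrder}(\ref{TheoTranOrderItThree}) one can express $o(\la o^\ast(b)\ra c^\ast)$ as $e^{o^\ast(b)}$ applied to the order type obtained by stripping the leading modality and re-adding it at level $0$; more concretely, $\la o^\ast(b)\ra c^\ast = o^\ast(b) \uparrow (\la 0 \ra D)$ for a suitable worm $D$ with $o(D) = $ something bounded by $o^\ast(c)$, whence $o^\ast(a) = e^{o^\ast(b)}(o(D) + 1)$. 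Since $o^\ast(b) < h(n)$, we have $e^{o^\ast(b)}(\xi) < e^{h(n)}(1) = h(n+1)$ for every $\xi < $ the least fixed point of $e^{(\cdot)}1$ above... — here the clean way is: $e^{o^\ast(b)}(\xi) < e^{h(n)} 1$ whenever $o^\ast(b) < h(n)$ and $\xi$ is small enough, using that $h(n)$ is of the form $e^{h(n-1)}1$ and additive indecomposability of hyperexponential values. The routine bookkeeping is to confirm that the relevant $\xi$ (coming from $o(D)+1 \leq o^\ast(c)+1 < h(n+1)$, and in fact $\leq h(n)$ when ${\sf N}(c) \le n-1$, or handled directly when ${\sf N}(c) = n$) stays below the threshold needed.

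The main obstacle I anticipate is the upper bound in the subcase ${\sf N}(c) = n$ (so the nesting of $a$ is witnessed through $c$ rather than through $b$): here one cannot simply absorb everything into a single $e^{o^\ast(b)}$, and must instead verify that prepending $\la o^\ast(b) \ra$ to $c^\ast$, where $o^\ast(b)$ may be comparatively small, still keeps the total below $h(n+1)$ — this requires exploiting that $h(n+1) = e^{h(n)}1$ is a fixed point of $e^\xi$ for $\xi < h(n)$ type reasoning, or equivalently a closure property of $h(n+1)$ under the worm operations, which should follow from Proposition~\ref{fixedpoints} and the characterization of $\Gamma_0$ and its approximants as fixed points. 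I would isolate this closure fact as the crux and prove it first, after which both inequalities fall out by the structural induction sketched above.
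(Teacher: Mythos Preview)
Your lower-bound argument is sound, but the upper-bound sketch contains a genuine error. The claimed identity $\la o^\ast(b)\ra c^\ast = o^\ast(b) \uparrow (\la 0 \ra D)$ is false in general: the operation $\mu \uparrow$ adds $\mu$ to every modality, so its range is contained in $\worms{}^{\geq \mu}$, which fails as soon as $c^\ast$ contains an entry below $o^\ast(b)$. Concretely, take $o^\ast(b)=1$ and $c^\ast=\la 0\ra\la 2\ra\top$; then $o(\la 1\ra\la 0\ra\la 2\ra\top)=\omega^\omega+\omega$ by Theorem~\ref{TheoTranOrder}.\ref{TheoTranOrderItSum}, which is not in the range of $e^1$ (it is not additively indecomposable). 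So the formula $o^\ast(a)=e^{o^\ast(b)}(o(D)+1)$ that the rest of your upper-bound argument relies on simply does not hold.

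The obstacle you flag in the subcase ${\sf N}(c)=n$ is an artifact of decomposing $a$ as $\ob b\cb c$. The paper instead inducts on $n$ and writes $a$ out completely as $\ob a_0\cb\ldots\ob a_m\cb$; since ${\sf N}(a)=\max_i({\sf N}(a_i)+1)$, every $a_i$ has nesting at most $n-1$ and at least one has nesting exactly $n-1$. The induction hypothesis then bounds the greatest ordinal $\alpha$ in $\mathcal S(a^\ast)$ by $h(n-1)\leq\alpha<h(n)$, and a single appeal to Lemma~\ref{LemmLowerBound} (both items) yields $o(\la h(n-1)\ra\top)\leq o(a^\ast)<o(\la h(n)\ra\top)$, i.e.\ $h(n)\leq o^\ast(a)<h(n+1)$. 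The ``closure fact'' you were looking for is exactly Lemma~\ref{LemmLowerBound}.\ref{LemmLowerBoundItTwo}; you were circling the right tool but through the wrong decomposition of $a$.
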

\begin{proof}
By induction on $n$. If $\bm{n=0}$ then we must have $a = \top$, hence $h(0) = 0 = o^\ast (a) < 1 = h(1) $.

For $\bm{n = n'+1}$, we have that $a = \ob a_0 \cb \ldots \ob a_m \cb$ for some $m \in \omega$. Moreover,
\begin{enumerate}
\item ${\sf N}(a_i) \leq n'$ for $i,\ 0 \leq i \leq m$;
\item there is $j \in \{0, \ldots, m\}$ such that ${\sf N}(a_J) = n'$. 
\end{enumerate}
Thus by the I.H. we get that $a^\ast = \langle \al_0 \rangle \ldots \langle \al_m \rangle \top$ such that:
\begin{enumerate}
\item For each $i$, $\al_i < h(n'+1)$;
\item there is $j \in \{0, \ldots, m\}$ such that $\al_j \geq h(n')$. 
\end{enumerate} 
By Lemma \ref{LemmLowerBound},
\[ o (\langle h(n') \rangle \top) \leq o(a^\ast) < o (\langle h(n'+1) \rangle \top); \]
but by Theorem \ref{TheoTranOrder} $ o (\langle h(n') \rangle \top) = e^{h(n')} 1 = h(n) $, while 
 $ o (\langle h(n'+1) \rangle \top) = e^{h(n)} 1 = h(n+1) $, as needed.
\end{proof}

As a consequence of this last proposition, we get the following corollaries.

\begin{Corollary}
For $a \in \wormsBC$, if ${\sf N}(a) = n $, then $a^\ast \in \mathbb{W}_{  h(n)}$.
\end{Corollary}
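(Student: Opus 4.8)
The plan is to derive this corollary directly from Proposition~\ref{proposition:nestedn} together with the earlier analysis of signatures. Suppose $a \in \wormsBC$ has ${\sf N}(a) = n$. By the proof of Proposition~\ref{proposition:nestedn}, the translation $a^\ast$ has the form $\langle \al_0 \rangle \ldots \langle \al_m \rangle \top$ where each ordinal $\al_i$ satisfies $\al_i < h(n'+1)$ when $n = n'+1$; more to the point, by Lemma~\ref{LemmLowerBound}\eqref{LemmLowerBoundItTwo} applied contrapositively, or simply by inspecting that same argument, the \emph{largest} ordinal appearing in $a^\ast$ is strictly below $h(n)$ whenever $n \geq 1$. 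Thus every modality index occurring in $a^\ast$ lies in the ordinal $h(n)$, which is exactly the statement that $a^\ast \in \worms{h(n)}$.

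First I would handle the base case $n = 0$ separately: then $a = \top$, so $a^\ast = \top \in \worms{h(0)} = \worms{0}$, which holds trivially since $\top$ belongs to every $\worms\Lambda$. For $n = n' + 1$, I would invoke the structural decomposition established inside the proof of Proposition~\ref{proposition:nestedn}: writing $a = \ob a_0 \cb \ldots \ob a_m \cb$, we have $a^\ast = \langle \al_0 \rangle \ldots \langle \al_m \rangle \top$ with $\al_i = o^\ast(a_i)$, and since each ${\sf N}(a_i) \leq n'$, Proposition~\ref{proposition:nestedn} (applied to $a_i$, whose nesting is some $n_i \leq n'$) gives $\al_i = o^\ast(a_i) < h(n_i + 1) \leq h(n' + 1) = h(n)$, using monotonicity of $h$. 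Hence every ordinal in the signature $\mathcal S(a^\ast)$ is below $h(n)$, so $a^\ast \in \worms{h(n)}$ by definition of $\worms{h(n)}$.

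There is no real obstacle here; the content is already contained in the preceding proposition, and the corollary is essentially a reformulation of the upper bound $o^\ast(a) < h(n+1)$ at the level of the individual modalities rather than the overall order type. The one point requiring a moment's care is that Proposition~\ref{proposition:nestedn} as stated bounds $o^\ast(a)$, not directly the indices $\al_i$ occurring in $a^\ast$; but the bound on the indices is extracted exactly as in that proposition's proof, where it is noted that each $\al_i < h(n'+1)$. I would simply cite that step. If one prefers a fully self-contained argument, an easy induction on the build-up of $a$ using the recursion ${\sf N}(\ob a \cb b) = \max({\sf N}(a)+1, {\sf N}(b))$ and the clause $(\ob a \cb b)^\ast = \langle o(a^\ast) \rangle b^\ast$ also works: the new modality index $o(a^\ast) = o^\ast(a)$ is bounded by $h({\sf N}(a)+1) \leq h({\sf N}(\ob a \cb b))$ via Proposition~\ref{proposition:nestedn}, and the indices inside $b^\ast$ are bounded by $h({\sf N}(b)) \leq h({\sf N}(\ob a \cb b))$ by the induction hypothesis.
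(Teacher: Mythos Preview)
Your proposal is correct and follows essentially the same approach as the paper: both handle $n=0$ trivially and, for $n>0$, write $a = \ob a_1 \cb \ldots \ob a_k \cb$ and invoke Proposition~\ref{proposition:nestedn} on each $a_i$ (whose nesting is at most $n-1$) to obtain $o^\ast(a_i) < h(n)$. Your version is simply more explicit about the monotonicity of $h$ and adds an optional alternative induction, but the core argument is identical.
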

\begin{proof}
For ${\sf N}(a) = 0$, clearly we have that $a^\ast \in \mathbb{W}_{h(n)}$. For ${\sf N}(a) = n \, {>} \, 0$, let $a := \ob a_1 \cb \ldots \ob a_k \cb$. Thus, $a^\ast := \la o^\ast(a_1) \ra \ldots \la o^\ast(a_k) \ra \top$ where by Proposition \ref{proposition:nestedn} each $o^\ast(a_i) \, {<} \, h(n)$.
\end{proof}

\begin{Corollary} \label{Lemma:ordinalsboundnesting}
For $a, \, b \in \wormsBC$,
$
o^\ast(a) \geq o^\ast(b) \ \Rightarrow \ {\sf N}(a) \geq {\sf N}(b).
$
\end{Corollary}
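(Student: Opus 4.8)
The plan is to derive this directly from Proposition~\ref{proposition:nestedn} together with the strict monotonicity of the function $h$. Write ${\sf N}(a) = n$ and ${\sf N}(b) = m$. Proposition~\ref{proposition:nestedn} gives us the two sandwiching estimates $h(n) \leq o^\ast(a) < h(n+1)$ and $h(m) \leq o^\ast(b) < h(m+1)$, so the nesting of a worm pins down the ordinal it represents to lie in a half-open interval $[h(k), h(k+1))$, and these intervals are pairwise disjoint and arranged in increasing order of $k$ because $h$ is strictly monotone.

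I would argue by contraposition. Suppose ${\sf N}(a) < {\sf N}(b)$, i.e.\ $n < m$, hence $n + 1 \leq m$. By the strict monotonicity of $h$ (noted right after Definition~\ref{defHFun}), $h(n+1) \leq h(m)$. Chaining the inequalities,
\[
o^\ast(a) < h(n+1) \leq h(m) \leq o^\ast(b),
\]
so $o^\ast(a) < o^\ast(b)$, which is the negation of $o^\ast(a) \geq o^\ast(b)$. This establishes the contrapositive and hence the corollary.

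There is no real obstacle here: the substantive content is entirely contained in Proposition~\ref{proposition:nestedn}, and the only additional ingredient is that the thresholds $h(k)$ increase with $k$. The one point worth stating carefully is the strictness bookkeeping — that $o^\ast(a)$ lies \emph{strictly} below $h(n+1)$ while $o^\ast(b)$ lies weakly above $h(m)$ — which is exactly what makes the two half-open intervals line up so that membership in a lower interval forces a strictly smaller ordinal.
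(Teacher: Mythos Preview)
Your argument is correct and is exactly the approach the paper takes: the paper's proof consists of the single sentence ``We reason by contrapositive applying Proposition~\ref{proposition:nestedn},'' and you have spelled out precisely that contrapositive using the strict monotonicity of $h$.
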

\begin{proof}
We reason by contrapositive applying Proposition \ref{proposition:nestedn}. 
\end{proof}

\section{The Bracket Calculus}

In this section we introduce the \emph{Bracket Calculus}, denoted \BC. This system is analogous to $\RC$ and, as we will see later, both systems can be shown to be equivalent under a natural translation of $\BC$-formulas into $\RC$-formulas. 

The main feature of $\BC$ is that it is based on a signature that uses autonomous notations instead of modalities indexed by ordinals, whose ordering must be computed using a separate calculus. Moreover, since the order between these notations can be established in terms of derivability within the calculus, the inferences in this system can be carried out without using any external property of ordinals. In this sense, we say that $\BC$ provides an autonomous provability calculus.

The set of $\BC$-formulas, $\form$, is defined by extending $\wormsBC$ to a strictly positive signature containing a constant $\top$, a binary connective $\wedge$ and a set of propositional variables.

\begin{Definition}
By $\form$ we denote the set of formulas built up by the following grammar:
\[
\varphi := \top \ | \ p \ | \ \varphi \land \psi \ | \ \ob a \cb \, \varphi \ \ \text{ for } a \in \wormsBC.
\]
\end{Definition}

Similarly to \textbf{RC}, $\BC$ is based on \emph{sequents}, i.e. expressions of the form $\varphi \vdash \psi$, where $\varphi, \, \psi \in \form$. In addition to this, we will also use $b\mathrel \unlhd a$, for $a, \, b \in \wormsBC$, to denote that either $a \vdash \ob \cb \, b$ or $a \vdash b$ is derivable. Analogously, we will use $b \mathrel \lhd a$  to denote that the sequent $a \vdash \ob \cb b$ is derivable. 

\begin{Definition}
$\BC$ is given by the following set of axioms and rules:\\

\noindent Axioms:
\begin{enumerate*}
\item 
$\varphi \vdash \varphi, \ \ \ \varphi \vdash \top$;
\label{AX}	

\item 
$\varphi \wedge \psi \vdash \varphi,  \ \ \ \varphi \wedge \psi \vdash \psi$;	\label{conjel}

\end{enumerate*}
\

\noindent Rules:
\begin{enumerate}
\item \label{r:1}
If $\varphi \vdash \psi$ and $\varphi\vdash\chi$, then $\varphi\vdash \psi \wedge \chi $;	

\item \label{r:2}
If $\varphi\vdash \psi$ and $\psi \vdash \chi$, then $\varphi\vdash \chi $;	

\item \label{r:3}
If $\varphi\vdash \psi$ and $b \mathrel \unlhd a$, then $\ob a \cb \, \varphi \vdash \ob b \cb \, \psi$ and $\ob a \cb \, \ob b \cb \, \varphi \vdash \ob b \cb \, \psi$;

\item \label{r:5}
If $b\mathrel \lhd a$, then $\ob a \cb \, \varphi \land \ob b \cb \, \psi \vdash \ob a \cb \, \big( \varphi \land \ob b \cb \, \psi\big)$.
\end{enumerate}
\end{Definition}

\section{Translation and preservability}

In this section we introduce a way of interpreting $\BC$-formulas as $\RC$-formulas, and prove that under this translation, both systems can derive exactly the same sequents.

\begin{Definition}
We define a translation $\tau$ between $\form$ and $\mathbb{F}_{\Gamma_0}$, $\tau: \form \to \mathbb{F}_{\Gamma_0}$, as follows:
\begin{multicols}2
\begin{enumerate}
\item $\top^\tau = \top$;
\item $p^\tau = p$;
\item $(\varphi \land \psi)^\tau = (\varphi^\tau \land \psi^\tau)$;
\item $(\ob a \cb \, \varphi)^\tau =  \la o^\ast(a) \ra \varphi^\tau$.
\end{enumerate}
\end{multicols}
\end{Definition}

Note that for $a \in \wormsBC$, $a^\tau = a^\ast$.
Using this and routine induction, the following can readily be verified.

\begin{Lemma}\label{lemmStarInstance}
Given $\varphi \in \form$ and $\alpha \in \mathcal S(\varphi^\tau)$, there is a subformula $a \in \wormsBC$ of $\varphi$ such that $\alpha = o^* (a)$.
\end{Lemma}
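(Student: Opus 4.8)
The plan is to prove Lemma \ref{lemmStarInstance} by structural induction on $\varphi \in \form$, tracking how the signature $\mathcal S(\varphi^\tau)$ is generated by the translation $\tau$. The key observation driving the whole argument is the clause $(\ob a \cb \, \varphi)^\tau = \la o^\ast(a) \ra \varphi^\tau$: every modality appearing in $\varphi^\tau$ is introduced precisely at a subformula of the form $\ob a \cb \, \psi$, with the ordinal of that modality being $o^\ast(a)$. So whenever $\alpha \in \mathcal S(\varphi^\tau)$, either $\alpha$ was already in the signature of the translation of a proper subformula, or $\alpha$ is exactly $o^\ast(a)$ for the outermost bracket $a$.

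Concretely, the base cases are immediate: if $\varphi = \top$ or $\varphi = p$, then $\varphi^\tau = \top$ or $p$, so $\mathcal S(\varphi^\tau) = \varnothing$ by the definition of signature, and the statement holds vacuously. For the conjunction case $\varphi = \psi_1 \land \psi_2$, we have $\varphi^\tau = \psi_1^\tau \land \psi_2^\tau$, hence $\mathcal S(\varphi^\tau) = \mathcal S(\psi_1^\tau) \cup \mathcal S(\psi_2^\tau)$; given $\alpha$ in this union, it lies in $\mathcal S(\psi_i^\tau)$ for some $i$, and the induction hypothesis applied to $\psi_i$ yields a subformula $a \in \wormsBC$ of $\psi_i$ — hence of $\varphi$ — with $\alpha = o^\ast(a)$. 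For the modal case $\varphi = \ob a \cb \, \psi$, we have $\varphi^\tau = \la o^\ast(a) \ra \psi^\tau$, so $\mathcal S(\varphi^\tau) = \{o^\ast(a)\} \cup \mathcal S(\psi^\tau)$. If $\alpha = o^\ast(a)$, then $a$ itself is the desired subformula of $\varphi$ (noting that $a \in \wormsBC$ is indeed a subformula of $\ob a \cb \, \psi$). Otherwise $\alpha \in \mathcal S(\psi^\tau)$, and the induction hypothesis gives a subformula $b \in \wormsBC$ of $\psi$, which is again a subformula of $\varphi$, with $\alpha = o^\ast(b)$.

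The only point requiring a little care is the remark that for $a \in \wormsBC$, $a^\tau = a^\ast$, which underpins the consistency between the $\ast$-translation on worms (from Section on Beklemishev's bracket notation) and the $\tau$-translation on formulas. This follows by a parallel induction on the structure of $a$: for $a = \top$ both sides are $\top$, and for $a = \ob c \cb d$ one checks $a^\tau = \la o^\ast(c) \ra d^\tau = \la o(c^\ast) \ra d^\ast = (\ob c \cb d)^\ast$, using the induction hypothesis $c^\tau = c^\ast$, $d^\tau = d^\ast$ and the definition of $\ast$. I do not anticipate any genuine obstacle here; the statement is essentially a bookkeeping exercise, and the ``main'' difficulty is simply being precise about what counts as a subformula $a \in \wormsBC$ of $\varphi$ — namely, that the bracket-prefixes occurring syntactically in $\varphi$ are exactly the worms whose $o^\ast$-values populate $\mathcal S(\varphi^\tau)$.
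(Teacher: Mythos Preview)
Your proposal is correct and matches the paper's approach: the paper simply notes that $a^\tau = a^\ast$ for $a \in \wormsBC$ and then states that the lemma ``can readily be verified'' by routine induction, which is exactly the structural induction you spell out. Your treatment of the modal case and the auxiliary verification of $a^\tau = a^\ast$ are precisely what the paper leaves implicit.
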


The following lemma establishes the preservability of $\BC$ with respect to $\RC$, under $\tau$.

\begin{Lemma}\label{lemmBCtoRC}
For any $\varphi,\, \psi \in \form$:
$
\varphi \vdash_\BC \psi \ \Longrightarrow \ \varphi^\tau \vdash_{\RC} \psi^\tau.
$
\end{Lemma}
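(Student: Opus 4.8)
The plan is to proceed by induction on the length of a $\BC$-derivation of $\varphi \vdash_\BC \psi$, showing that each axiom of $\BC$ translates to a derivable $\RC$-sequent and that each rule of $\BC$ is admissible in $\RC$ after applying $\tau$. The two axiom schemes (\ref{AX}) and (\ref{conjel}) translate verbatim into the corresponding $\RC$-axioms (\ref{RCax}) and (\ref{RCconjel}), since $\tau$ commutes with $\wedge$ and fixes $\top$ and propositional variables. Rules (\ref{r:1}) and (\ref{r:2}) of $\BC$ are literally rules (\ref{Rr:1}) and (\ref{Rr:2}) of $\RC$ after translation, so the induction step for these is immediate from the induction hypothesis.

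The substantive work is in rules (\ref{r:3}) and (\ref{r:5}), which involve the bracket modalities and the relations $\unlhd$ and $\lhd$. The key auxiliary fact I would establish first is a translation lemma for these relations: if $b \mathrel\unlhd a$ is derivable in $\BC$, then $o^*(b) \leq o^*(a)$, and if $b \mathrel\lhd a$ is derivable in $\BC$, then $o^*(b) < o^*(a)$. This follows by unwinding the definitions: $b \mathrel\unlhd a$ means $a \vdash_\BC \ob\cb b$ or $a \vdash_\BC b$, which by the induction hypothesis gives $a^\tau = a^* \vdash_{\RC} \la 0\ra b^*$ or $a^* \vdash_{\RC} b^*$; since $a^*, b^*$ are worms, Corollary~\ref{corLeq} yields $o(a^*) \geq o(b^*)$, i.e.\ $o^*(a) \geq o^*(b)$. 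For the strict case, $b \mathrel\lhd a$ gives $a^* \vdash_{\RC} \la 0\ra b^*$, i.e.\ $b^* \wle{} a^*$, which by Theorem~\ref{TheoTranOrder}(\ref{TheoTranOrderItSum}) (computing $o(\la 0\ra b^*) = o(b^*)+1 > o(b^*)$, compared against $o(a^*)$) forces $o^*(b) < o^*(a)$. One subtlety here: to apply the induction hypothesis to derivations of $b \mathrel\unlhd a$ and $b \mathrel\lhd a$ I need the induction to be on the length of $\BC$-derivations taken over all sequents simultaneously, so that the subderivation witnessing the side condition is shorter.

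With this in hand, rule (\ref{r:3}) translates as follows: from $\varphi \vdash_\BC \psi$ the induction hypothesis gives $\varphi^\tau \vdash_{\RC} \psi^\tau$, so by $\RC$-rule (\ref{Rr:3}), $\la o^*(a)\ra \varphi^\tau \vdash_{\RC} \la o^*(a)\ra \psi^\tau$. Since $o^*(a) \geq o^*(b)$ by the translation lemma, if the inequality is strict I apply $\RC$-axiom (\ref{RCmon}) to descend from $\la o^*(a)\ra \psi^\tau$ to $\la o^*(b)\ra\psi^\tau$; if $o^*(a) = o^*(b)$ the step is vacuous. For the second conjunct of (\ref{r:3}), $\ob a\cb\ob b\cb\varphi \vdash \ob b\cb\psi$, after translation I need $\la o^*(a)\ra\la o^*(b)\ra\varphi^\tau \vdash_{\RC} \la o^*(b)\ra\psi^\tau$: when $o^*(a) = o^*(b)$ this is $\RC$-axiom (\ref{RCtrans}) followed by (\ref{Rr:3}) and (\ref{Rr:2}); when $o^*(a) > o^*(b)$ I first use (\ref{RCmon}) inside to get $\la o^*(a)\ra\la o^*(b)\ra\varphi^\tau \vdash \la o^*(b)\ra\la o^*(b)\ra\varphi^\tau$ (via rule (\ref{Rr:3})) and then apply (\ref{RCtrans}). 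Finally, rule (\ref{r:5}): $b \mathrel\lhd a$ gives $o^*(b) < o^*(a)$ strictly, so the translated sequent $\la o^*(a)\ra\varphi^\tau \land \la o^*(b)\ra\psi^\tau \vdash_{\RC} \la o^*(a)\ra(\varphi^\tau \land \la o^*(b)\ra\psi^\tau)$ is exactly an instance of $\RC$-axiom (\ref{RCconj}). I expect the main obstacle to be bookkeeping the case analysis in rule (\ref{r:3}) (strict versus non-strict ordinal comparison, and the extra-bracket variant), together with setting up the simultaneous induction cleanly so that the side-condition subderivations are legitimately available at smaller complexity.
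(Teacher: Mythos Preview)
Your proposal is correct and follows essentially the same approach as the paper: induction on the length of the $\BC$-derivation, with the side conditions in rules~\ref{r:3} and~\ref{r:5} translated into ordinal inequalities $o^*(a)\geq o^*(b)$ (resp.\ $>$) via the induction hypothesis and Corollary~\ref{corLeq}, and then dispatched using $\RC$-axioms~\ref{RCtrans}--\ref{RCconj} together with rule~\ref{Rr:3}. Your explicit case split on strict versus non-strict inequality in rule~\ref{r:3}, and your remark that the induction must be set up so the side-condition subderivations count toward the derivation length, are both slightly more careful than the paper's own presentation.
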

\begin{proof}
By induction on the length of the derivation. We can easily check that the set of axioms of $\BC$ is preserved under $\tau$. Likewise, the cases for a derivation ending on Rules \ref{r:1} or \ref{r:2} are straightforward. Thus, we only check Rules \ref{r:3} and \ref{r:5}. 

Regarding Rule \ref{r:3}, we need to prove that if $a \mathrel \unrhd b$ then both sequents $\langle o^\ast(a) \rangle \varphi^\tau \vdash \langle o^\ast(b) \rangle \psi^\tau$ and $\langle o^\ast(a) \rangle \langle o^\ast(b) \rangle \varphi^\tau \vdash \langle o^\ast(b) \rangle \psi^\tau$ are derivable in $\RC$. We can make the following observations by applying the I.H.:
\begin{enumerate}
\item Since $a \mathrel \unrhd b$, we have that either $a^\tau \vdash \langle 0 \rangle b^\tau$ or $a^\tau \vdash b^\tau$ are derivable in $\RC$. Therefore, by Corollary \ref{corLeq}, $o(a^\tau) \geq o(b^\tau)$. Since $o^*(a) = o(a^\ast) = o(a^\tau)$ and the same equality holds for $b$, we have that $o^*(a) \geq o^*(b)$.

\item We also have that $\varphi^\tau \vdash_{\RC} \psi^\tau$ and thus, by Rule \ref{Rr:3} of $\RC$ we obtain that $\langle o^\ast(a) \rangle \varphi^\tau \vdash \langle o^\ast(a) \rangle  \psi^\tau$ and $\langle o^\ast(a) \rangle \langle o^\ast(b) \rangle \varphi^\tau \vdash \langle o^\ast(a) \rangle \langle o^\ast(b) \rangle  \psi^\tau$ are derivable in $\RC$.
\end{enumerate}
On the one hand, by these two facts together with Axiom \ref{RCmon} we obtain that $\langle o^\ast(a) \rangle \varphi^\tau \vdash_{\RC} \langle o^\ast(b) \rangle \psi^\tau$. On the other hand, we can combine Axioms \ref{RCmon} and \ref{RCtrans} to get that $\langle o^\ast(a) \rangle \langle o^\ast(b) \rangle \varphi^\tau \vdash_{\RC} \langle o^\ast(b) \rangle \psi^\tau$.

We follow an analogous reasoning in the case of Rule \ref{r:5}. By the I.H. we have that $a^\tau \vdash_{\RC} \langle 0 \rangle b^\tau$. Therefore $o^\ast(a) > o^\ast(b)$ and by Axiom \ref{RCconj}, $\langle o^\ast(a) \rangle \varphi  \land \langle o^\ast(b) \rangle \psi \vdash_{\RC} \langle o^\ast(a) \rangle  \big( \varphi \land \langle o^\ast(b) \rangle \psi\big)$.
\end{proof}

With the following definition we fix a way of translating $\mathbb{F}_{\Gamma_0}$-formulas into formulas in $\form$. However, since different words in $\wormsBC$ might denote the same ordinal, we need a normal form theorem for $\wormsBC$. 

\begin{Definition}
We define ${\sf NF} \subset \wormsBC$ to be the smallest set of $\wormsBC$-words such that $\top \in {\sf NF}$ and for any $\ob a \cb b \in \wormsBC$, if $a, \, b \in {\sf NF}$ and $\big(\ob a \cb b \big)^\ast\in {\sf BNF}$, then $\ob a \cb b \in {\sf NF}$.
\end{Definition} 

Every element of $\wormsBC$ has a unique normal form, as shown by L. Beklemishev in \cite{Beklemishev:2005:VeblenInGLP}.

\begin{Theorem}[Beklemishev]
For each $\al \in \Gamma_0$ we can associate a unique $a_\al \in {\sf NF}$ such that $o^*(a_\al) = \al$.
\end{Theorem}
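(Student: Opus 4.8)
The plan is to establish the theorem by exhibiting, for each $\al \in \Gamma_0$, a canonical $a_\al \in {\sf NF}$ with $o^\ast(a_\al) = \al$ (existence), and then showing that any two elements of ${\sf NF}$ representing the same ordinal must be literally identical (uniqueness). The key tool for both halves is Theorem~\ref{TheoBNF}: every worm in $\WRC$ is $\RC$-equivalent to a unique ${\sf BNF}$ worm, and by Corollary~\ref{corLeq} together with Theorem~\ref{TheoWormsWO}, $\RC$-equivalence of worms coincides with having the same order-type $o$. Thus it suffices to transfer this uniqueness across the translation $\ast$.

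For \textbf{existence}, I would argue by induction on $\al$, using the hyperexponential representation of ordinals below $\Gamma_0$. Since every $\al < \Gamma_0$ can be written in terms of $0$, $1$, addition, and $(\xi,\zeta)\mapsto e^\xi\zeta$, and since Theorem~\ref{TheoTranOrder} tells us precisely how $o^\ast$ interacts with worm concatenation ($o(B\la 0\ra A) = o(A)+1+o(B)$) and with the $\uparrow$ operation ($o(\al\uparrow A) = e^\al o(A)$), one builds a worm $A$ with $o(A) = \al$ recursively; the bracket-translation $\ast$ is surjective onto $\WRC$ in the relevant sense (each modality $\la o^\ast(b)\ra$ arises from a bracket $\ob b\cb$), so one lifts $A$ to some $a \in \wormsBC$ with $a^\ast \equiv_{\RC}$ the unique ${\sf BNF}$ representative of $A$. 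The subtlety is that we need the \emph{inner} worms $a_i$ and the outer structure to simultaneously be in ${\sf NF}$; this is handled by the recursive shape of both ${\sf BNF}$ and ${\sf NF}$, unwinding the ${\sf BNF}$ decomposition $A = A_k\la\al\ra\cdots\la\al\ra A_0$ and invoking the induction hypothesis on the strictly smaller ordinals $o^\ast$-associated to the pieces $A_i$ and to $\al = \min\mathcal S(A)$ (note $o(\la\al\ra\top) \le o(A_i)$ forces $\al$'s witnessing bracket-worm to have smaller order type, keeping the induction well-founded via Corollary~\ref{Lemma:ordinalsboundnesting} or Lemma~\ref{LemmLowerBound}).

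For \textbf{uniqueness}, suppose $a, a' \in {\sf NF}$ with $o^\ast(a) = o^\ast(a')$. Then $a^\ast$ and $(a')^\ast$ are both in ${\sf BNF}$ (by definition of ${\sf NF}$) and have the same $o$-value, hence are $\RC$-equivalent by Corollary~\ref{corLeq} and Theorem~\ref{TheoWormsWO}, hence are the \emph{same} worm in $\WRC$ by Theorem~\ref{TheoBNF}. So $a^\ast = (a')^\ast$ as sequences of modalities $\la\beta_1\ra\cdots\la\beta_n\ra\top$. Now I would peel off brackets from the outside: writing $a = \ob b_1\cb b_1'$ and $a' = \ob c_1\cb c_1'$, the equality $a^\ast = (a')^\ast$ forces $o^\ast(b_1) = \beta_1 = o^\ast(c_1)$ and $(b_1')^\ast = (c_1')^\ast$; moreover $b_1, b_1' \in {\sf NF}$ (the defining closure condition of ${\sf NF}$ is hereditary downward in the obvious sense — I should verify this small point explicitly) and $b_1'$, $c_1'$ have equal $\ast$-images hence equal $o^\ast$-values. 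Then the induction hypothesis (on, say, $o^\ast(a)$, since both $b_1$ and $b_1'$ have strictly smaller $o^\ast$-value than $a$ — this follows since $b_1^\ast$ is a proper sub-worm or has a strictly smaller order type, again by Lemma~\ref{LemmLowerBound}) gives $b_1 = c_1$ and $b_1' = c_1'$, whence $a = a'$.

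The main obstacle I anticipate is the bookkeeping around the heredity of ${\sf NF}$ and the precise matching between the recursive decompositions of ${\sf BNF}$ and of the bracket words: one must check that when $a^\ast = (a')^\ast$ as modality-sequences, the outermost brackets of $a$ and $a'$ align (i.e., the first bracket of $a$ is $\ob b_1\cb$ where $o^\ast(b_1)$ equals the first modality index), and that the remaining tails are again in ${\sf NF}$ so the induction applies. Establishing the strict decrease of $o^\ast$ on the sub-brackets to justify the induction is where Lemma~\ref{LemmLowerBound} (or the nesting bounds of Section~5 via Corollary~\ref{Lemma:ordinalsboundnesting}) does the real work; everything else is a routine transfer of Beklemishev's ${\sf BNF}$ uniqueness through the order-preserving translation $\ast$.
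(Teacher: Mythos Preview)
The paper does not prove this theorem: it is stated with attribution to Beklemishev \cite{Beklemishev:2005:VeblenInGLP} and no argument is given. So there is nothing to compare against directly.

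Your outline is essentially correct and is the natural way to recover the result from the tools assembled in the paper. A few remarks to tighten it. For \emph{uniqueness}, the heredity of ${\sf NF}$ under peeling off the outer bracket is immediate from the recursive definition (if $\ob b_1\cb b_1' \in {\sf NF}$ then by definition $b_1, b_1' \in {\sf NF}$), so that ``small point'' really is small. For the well-foundedness of your induction on $o^\ast(a)$: the inequality $o^\ast(b_1') < o^\ast(a)$ is clear since $a^\ast \vdash \langle 0\rangle (b_1')^\ast$; the inequality $o^\ast(b_1) < o^\ast(a)$ holds because $o^\ast(b_1) < e^{o^\ast(b_1)} 1 = o(\langle o^\ast(b_1)\rangle\top) \leq o(a^\ast)$, the strict inequality using that $\Gamma_0$ is the least nonzero fixed point of $\xi \mapsto e^\xi 1$ and the last inequality being Lemma~\ref{LemmLowerBound}.\ref{LemmLowerBoundItOne}.

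For \emph{existence}, the cleanest route is: given $\alpha<\Gamma_0$, take the unique $A\in{\sf BNF}$ with $o(A)=\alpha$ (Theorem~\ref{TheoBNF} plus Lemma~\ref{LemmOUnique}), note that every modality index $\beta$ occurring in $A$ satisfies $\beta < e^\beta 1 \leq \alpha$ by the same fixed-point argument, apply the induction hypothesis to obtain $b_\beta\in{\sf NF}$ with $o^\ast(b_\beta)=\beta$ for each such index, and replace each $\langle\beta\rangle$ by $\ob b_\beta\cb$. The only thing left to check is that every tail of the resulting bracket word has $\ast$-image in ${\sf BNF}$; this follows by the recursive structure of ${\sf BNF}$ (a tail of a ${\sf BNF}$ worm is again in ${\sf BNF}$), which you should state and verify explicitly rather than leave implicit.
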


Now we are ready to translate $\mathbb{F}_{\Gamma_0}$-formulas into $\form$-formulas.

\begin{Definition}\label{defIota}
We define a translation $\iota$ between $\mathbb{F}_{\Gamma_0}$ and $\form$, $\iota: \mathbb{F}_{\Gamma_0} \to \form$, as follows:

\begin{multicols}2
\begin{enumerate}
\item $\top^\iota = \top$;
\item $p^\iota = p$;
\item $(\varphi \land \psi)^\iota = (\varphi^\iota \land \psi^\iota)$;
\item $(\langle \al \rangle \, \varphi)^\iota =  \ob a_\al \cb \varphi^\iota$.
\end{enumerate}
\end{multicols}
\end{Definition}

The following remark follows immediately from the definitions of $\tau$ and $\iota$. 

\begin{Remark} \label{remark:iotatau}
For any $\varphi \in \mathbb{F}_{\Gamma_0}$, $(\varphi^\iota)^\tau = \varphi$.
In particular, if $A \in \mathbb{W}_{\Gamma_0} $ is a worm then $A^\iota \in \wormsBC$ and $o^*(A^\iota) = o((A^\iota)^\ast) = o((A^\iota)^\tau) = o(A)$.
\end{Remark}

With the next definition, we extend the nesting ${\sf N}(a)$ of $a\, {\in}\, \wormsBC$ to $\form$-formulas.

\begin{Definition}
For $\varphi \in \form$, we define the nesting of $\varphi$, ${\sf Nt}(\varphi)$, as the maximum number of nested brackets. That is:
\begin{enumerate}
\item ${\sf Nt}(\top) = {\sf Nt}(p) = {\sf N}(\top) $;
\item ${\sf Nt}(\varphi \land \psi) = \max\big({\sf Nt}(\varphi),\, {\sf Nt}(\psi)\big)$;
\item ${\sf Nt}(\ob a \cb \, \varphi) = \max\big({\sf N}( \ob a \cb), \, {\sf Nt}(\varphi) \big) = \max\big({\sf N}(a) + 1, \, {\sf Nt}(\varphi) \big)$. 
\end{enumerate} 
\end{Definition}

The upcoming remark collects a useful observation concerning the nesting ${\sf Nt}(\varphi)$ of a formula $\varphi$ and its subformulas. This fact can be verified by an easy induction.

\begin{Remark} \label{nestingsubformula} For any $\varphi \in \form$ which is either $\top$ or ${\sf Nt}(\varphi) \geq 1$, there is a subformula $a \in \wormsBC$ of $\varphi$ such that ${\sf Nt}(\varphi) = {\sf Nt}(a)$. Moreover, if ${\sf Nt}(\varphi) \geq 1$, there is a subformula $a \in \wormsBC$ of $\varphi$ such that ${\sf Nt}(\varphi) = {\sf Nt}(a) + 1$. 
\end{Remark}

The following lemma relates the derivability in $\RC$ under $\tau$, and the nesting of formulas in $\form$.
 
\begin{Lemma} \label{Lemma:nestingBoundDerivability}
For any $\varphi,\, \psi \in \form$:
\[
\varphi^\tau \vdash_{\RC} \psi^\tau  \ \Longrightarrow \ {\sf Nt}(\varphi) \geq {\sf Nt}(\psi).
\]
\end{Lemma}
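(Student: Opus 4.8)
The plan is to reduce the statement about $\BC$-formulas to a statement purely about $\RC$-formulas and their signatures, using the tools already assembled. The key bridge is the observation in Lemma~\ref{lemmStarInstance}: every ordinal in $\mathcal S(\varphi^\tau)$ is of the form $o^*(a)$ for a $\wormsBC$-subformula $a$ of $\varphi$, together with Proposition~\ref{proposition:nestedn} and Corollary~\ref{Lemma:ordinalsboundnesting}, which tie $o^*(a)$ to the nesting ${\sf N}(a)$. So the first step is to prove the following link between the two nesting notions: for any $\varphi \in \form$ with ${\sf Nt}(\varphi) \geq 1$, one has
\[
h\big({\sf Nt}(\varphi)\big) \leq \max \mathcal S(\varphi^\tau) < h\big({\sf Nt}(\varphi)+1\big),
\]
and moreover ${\sf Nt}(\varphi) = 0$ iff $\mathcal S(\varphi^\tau) = \varnothing$. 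Both are proved by a straightforward induction on $\varphi$: the atomic and conjunction cases are immediate from the definitions of ${\sf Nt}$ and $\mathcal S$, and the modal case $\varphi = \ob a\cb\psi$ follows by combining the induction hypothesis for $\psi$ with Proposition~\ref{proposition:nestedn} applied to $a$ and the recursion ${\sf N}(\ob a \cb) = {\sf N}(a)+1$, using that $h$ is strictly monotone so that the maximum of the two bounds behaves correctly. Equivalently, one can phrase this as: ${\sf Nt}(\varphi)$ is the least $n$ with $\mathcal S(\varphi^\tau) \subseteq h(n+1)$ (vacuously when the signature is empty).

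With this in hand, the second step handles the degenerate case: if ${\sf Nt}(\psi) = 0$, the conclusion ${\sf Nt}(\varphi) \geq {\sf Nt}(\psi)$ is trivial, so assume ${\sf Nt}(\psi) \geq 1$, hence $\mathcal S(\psi^\tau) \neq \varnothing$. From $\varphi^\tau \vdash_{\RC} \psi^\tau$ and Lemma~\ref{Lemma:OrdinalsBoundedByDerivability}\eqref{Ord>0} we get $\max \mathcal S(\varphi^\tau) \geq \max \mathcal S(\psi^\tau)$; in particular $\mathcal S(\varphi^\tau) \neq \varnothing$, so ${\sf Nt}(\varphi) \geq 1$ as well. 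Now apply the link from the first step to both sides: $h({\sf Nt}(\varphi)) \leq \max\mathcal S(\varphi^\tau)$ is not directly what we want, so instead use the upper bound on the $\psi$ side and the lower bound on the $\varphi$ side. Precisely: $h({\sf Nt}(\psi)) \leq \max\mathcal S(\psi^\tau) \leq \max\mathcal S(\varphi^\tau) < h({\sf Nt}(\varphi)+1)$, whence $h({\sf Nt}(\psi)) < h({\sf Nt}(\varphi)+1)$, and by strict monotonicity of $h$ this gives ${\sf Nt}(\psi) < {\sf Nt}(\varphi)+1$, i.e. ${\sf Nt}(\psi) \leq {\sf Nt}(\varphi)$, as required.

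I do not expect a serious obstacle here; the lemma is genuinely a packaging result. The one point requiring mild care is the induction in the first step, specifically checking that in the modal clause the strict inequalities survive the $\max$: if ${\sf N}(a)+1 \geq {\sf Nt}(\psi)$ then the bound is governed by $a$ via Proposition~\ref{proposition:nestedn}, and if ${\sf Nt}(\psi) > {\sf N}(a)+1$ then it is governed by $\psi$ via the induction hypothesis, and in either case $h$ of the relevant index sandwiches $\max\mathcal S\big((\ob a\cb\psi)^\tau\big) = \max\big(\{o^*(a)\}\cup\mathcal S(\psi^\tau)\big)$ correctly because $h$ is strictly increasing. An alternative, even more economical route that avoids re-deriving the sandwich bound is to invoke Lemma~\ref{lemmStarInstance} directly: pick the subformula $a$ of $\varphi$ with $\max\mathcal S(\varphi^\tau) = o^*(a)$, note ${\sf N}(a) \leq {\sf Nt}(\varphi) - 1$ by definition of ${\sf Nt}$ (and symmetrically for $\psi$, using Remark~\ref{nestingsubformula} to find a witnessing subformula realizing ${\sf Nt}(\psi) = {\sf N}(b)+1$), and then chain $o^*(b) \geq h({\sf N}(b)) = h({\sf Nt}(\psi)-1)$ against $o^*(a) < h({\sf N}(a)+1) \leq h({\sf Nt}(\varphi))$ together with $o^*(a) \geq o^*(b)$ from Lemma~\ref{Lemma:OrdinalsBoundedByDerivability}; this is essentially the same computation but routes everything through the already-proven corollaries rather than a fresh induction.
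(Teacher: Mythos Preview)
Your alternative route at the end is essentially the paper's own proof: use Lemma~\ref{lemmStarInstance} to pick $a$ with $o^\ast(a)=\max\mathcal S(\varphi^\tau)$, use Remark~\ref{nestingsubformula} to pick $b$ with ${\sf Nt}(\psi)={\sf N}(b)+1$ (so that $o^\ast(b)\in\mathcal S(\psi^\tau)$), and then combine $o^\ast(a)\geq o^\ast(b)$ with Proposition~\ref{proposition:nestedn} (or equivalently Corollary~\ref{Lemma:ordinalsboundnesting}) to get ${\sf N}(a)\geq{\sf N}(b)$ and hence ${\sf Nt}(\varphi)\geq{\sf N}(a)+1\geq{\sf N}(b)+1={\sf Nt}(\psi)$. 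That argument is correct as you wrote it.

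Your primary route, however, has an off-by-one error in the sandwich bound. You claim that for ${\sf Nt}(\varphi)\geq 1$,
\[
h\big({\sf Nt}(\varphi)\big)\leq \max\mathcal S(\varphi^\tau)<h\big({\sf Nt}(\varphi)+1\big),
\]
but this fails already for $\varphi=\ob\cb$: here ${\sf Nt}(\varphi)=1$ and $\max\mathcal S(\varphi^\tau)=o^\ast(\top)=0$, while $h(1)=1$. The correct sandwich (which is exactly what Proposition~\ref{proposition:nestedn} gives once you unfold ${\sf Nt}$ as the maximum of ${\sf N}(a)+1$ over the modality indices $a$) is
\[
h\big({\sf Nt}(\varphi)-1\big)\leq \max\mathcal S(\varphi^\tau)<h\big({\sf Nt}(\varphi)\big).
\]
With this corrected bound your chain becomes $h({\sf Nt}(\psi)-1)\leq\max\mathcal S(\psi^\tau)\leq\max\mathcal S(\varphi^\tau)<h({\sf Nt}(\varphi))$, and strict monotonicity of $h$ still yields ${\sf Nt}(\psi)-1<{\sf Nt}(\varphi)$, i.e.\ ${\sf Nt}(\psi)\leq{\sf Nt}(\varphi)$. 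So the strategy is sound; only the indices need shifting.
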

\begin{proof}
Suppose that $\varphi^\tau \vdash_{\RC} \psi^\tau $.
If $\mathcal S (\psi^\tau) = \varnothing$ then it is easy to check that ${\sf Nt} (\psi) = 0$ and there is nothing to prove, so assume otherwise.
Then, by Lemma \ref{Lemma:OrdinalsBoundedByDerivability}.\ref{Ord>0}, $ \max  \mathcal S (\varphi^\tau) \geq \max \mathcal S (\psi^\tau)$.
Using Lemma \ref{lemmStarInstance}, let $a \in \wormsBC$ be a subformula of $\varphi$ such that $o^\ast(a) = \max \mathcal{S}(\varphi^\tau)$. Moreover, since $\mathcal S (\psi^\tau) = \varnothing$, then ${\sf Nt} (\psi) \geq 1$. Therefore, with the help of Remark \ref{nestingsubformula} we can consider $b \in \wormsBC$, a subformula of $\psi$ such that ${\sf Nt}(\psi) = {\sf N}(b) + 1$. If we had ${\sf N}(a) < {\sf N}(b)$ then it would follow from Corollary \ref{Lemma:ordinalsboundnesting} that $o^\ast (a) < o^\ast(b)$, contradicting $ \max  \mathcal S (\varphi^\tau) \geq \max \mathcal S (\varphi^\tau)$.
Thus ${\sf N}(a) \geq {\sf N}(b)$ and ${\sf Nt} (\varphi ) \geq {\sf N}(a) + 1 \geq {\sf Nt}(\psi)$, as needed.
\ignore{
By induction on $\psi$. The only interesting case is $\psi := \ob b \cb \chi$. We can distinguish two cases:
\begin{enumerate}
\item ${\sf Nt}(\psi) = {\sf Nt}(\chi)$, and then it follows by the I.H.;
\item ${\sf Nt}(\psi) = {\sf Nt}(b) + 1$. Since $\varphi^\tau \vdash_{\RC} \psi^\tau$, then by Lemma \ref{Lemma:OrdinalsBoundedByDerivability} we get that $\max \mathcal{S}(\varphi^\tau) \geq \max\mathcal{S}(\psi^\tau)$. Using Lemma \ref{lemmStarInstance}, let $a$ be a subformula of $\varphi$ such that $o^\ast(a) = \max \mathcal{S}(\varphi^\tau)$. Thus, $o^\ast(a) \geq o^\ast (b)$ and therefore, by Corollary \ref{Lemma:ordinalsboundnesting}, ${\sf Nt}(a) \geq {\sf Nt}(b)$. Hence, 
\[
{\sf Nt}(\varphi)\geq {\sf Nt}(a) + 1 \geq {\sf Nt}(b) + 1 = {\sf Nt}(\psi).
\]   
\end{enumerate}
}
\end{proof}

With the following theorem we conclude the proof of the preservability between $\BC$ and $\RC$.

\begin{Theorem}\label{theoPreserve}
For any $\varphi,\, \psi \in \form$:
\[
\varphi^\tau \vdash_{\RC} \psi^\tau  \ \Longleftrightarrow \ \varphi \vdash_\BC \psi.
\]
\end{Theorem}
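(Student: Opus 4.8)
The forward direction, $\varphi \vdash_\BC \psi \Rightarrow \varphi^\tau \vdash_{\RC} \psi^\tau$, is exactly Lemma \ref{lemmBCtoRC}, so the work is entirely in the converse. The plan is to establish $\varphi^\tau \vdash_{\RC} \psi^\tau \Rightarrow \varphi \vdash_\BC \psi$ by induction on the length of an $\RC$-derivation of $\varphi^\tau \vdash_{\RC} \psi^\tau$, but with a twist: the naive induction fails because a derivation of $\varphi^\tau \vdash_{\RC} \psi^\tau$ may pass through intermediate $\RC$-formulas that are not of the form $\chi^\tau$ for any $\chi \in \form$ (for instance, an arbitrary worm $A \in \worms{\Gamma_0}$ need not literally equal $a^\ast$ for a bracket word $a$, only be $\equiv_{\RC}$ to one). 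So I would first reduce to derivations of a controlled shape. Using Remark \ref{remark:iotatau}, every $\RC$-formula $\theta$ appearing in the derivation satisfies $\theta \equiv_{\RC} (\theta^\iota)^\tau$ (by completeness/soundness of $\iota$ on the relevant structure — more precisely, $(\theta^\iota)^\tau = \theta$ when $\theta$ is already in the image of $\tau$, and in general $\theta$ and $(\theta^\iota)^\tau$ denote the same thing up to $\RC$-provable equivalence on the modalities that occur). The cleaner route is: show that $\iota$ is a $\BC$-to-$\RC$ "inverse" in the sense that $\varphi^\tau \vdash_{\RC} \psi^\tau$ iff $(\varphi^\tau)^\iota \vdash_\BC (\psi^\tau)^\iota$, and then collapse using Remark \ref{remark:iotatau}.

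Concretely, the key steps I would carry out are: (1) Prove the "companion" implication $A \vdash_{\RC} B \Rightarrow A^\iota \vdash_\BC B^\iota$ for $\RC$-formulas, by induction on the $\RC$-derivation. Each $\RC$-axiom and rule must be matched by a $\BC$-derivation after applying $\iota$. For the axioms RCax and RCconjel this is immediate since $\iota$ commutes with $\wedge$. For RCtrans ($\la\al\ra\la\al\ra\varphi \vdash \la\al\ra\varphi$), RCmon ($\la\al\ra\varphi\vdash\la\be\ra\varphi$ for $\al>\be$), and RCconj, the point is that the arithmetic condition $\al \ge \be$ (resp. $\al > \be$) on ordinals must translate into the syntactic condition $b \mathrel\unlhd a$ (resp. $b\mathrel\lhd a$) on the corresponding normal-form bracket words $a = a_\al$, $b = a_\be$. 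This is where Corollary \ref{corLeq} and Theorem \ref{theoPreserve}'s already-available direction (Lemma \ref{lemmBCtoRC}) combine: $o^\ast(a) \ge o^\ast(b)$ holds iff $a^\ast \wgeq{} b^\ast$ iff ($a^\ast \vdash_{\RC} b^\ast$ or $a^\ast \vdash_{\RC} \la 0\ra b^\ast$), and I need to upgrade this to $b \mathrel\unlhd a$, i.e. $a \vdash_\BC b$ or $a \vdash_\BC \ob\cb b$ in the bracket calculus. So step (1) actually depends on a sub-lemma: for $a,b \in {\sf NF}$, $o^\ast(a) \ge o^\ast(b)$ implies $b \mathrel\unlhd a$ (and strict inequality implies $b \mathrel\lhd a$). (2) Prove that sub-lemma: this is essentially the statement that $\BC$ correctly computes the worm ordering on normal forms, and it should follow by transferring Beklemishev's normal-form analysis (Theorem \ref{TheoBNF}, the ${\sf BNF}$/${\sf NF}$ correspondence) through the translation, together with the $\BC$ rules \ref{r:3} and \ref{r:5} which are precisely the bracket-language images of RCmon, RCtrans, RCconj. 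One proceeds by induction on nesting, comparing the leading brackets: rule \ref{r:3} handles monotonicity and the "absorb a repeated prefix" step, and the ${\sf N}(a) \ge {\sf N}(b)$ bound from Corollary \ref{Lemma:ordinalsboundnesting} plus Lemma \ref{Lemma:nestingBoundDerivability} guarantees the recursion is well-founded and stays within the calculus. (3) Finally, given $\varphi^\tau \vdash_{\RC}\psi^\tau$, apply step (1) to get $(\varphi^\tau)^\iota \vdash_\BC (\psi^\tau)^\iota$; but $(\varphi^\tau)^\iota$ and $\varphi$ need not be syntactically equal — they agree only because each bracket subword and its normal form denote the same ordinal. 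So one last induction on $\varphi$ shows $\varphi \equiv_\BC (\varphi^\tau)^\iota$, using that $a \equiv_\BC a_{o^\ast(a)}$ for any $a \in \wormsBC$ (again from the sub-lemma in step (2), applied in both directions). Chaining these gives $\varphi \vdash_\BC \psi$.

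The main obstacle is step (2), the sub-lemma that $o^\ast(a) \ge o^\ast(b) \Rightarrow b \mathrel\unlhd a$ for normal forms: this is the heart of why the bracket calculus is complete, and it cannot be obtained for free from Lemma \ref{lemmBCtoRC} (which goes the wrong way). I expect it requires essentially reconstructing, inside $\BC$, the recursive comparison of Beklemishev normal forms — peeling off leading blocks $\ob a_0\cb\cdots\ob a_m\cb$, comparing the innermost ordinals via the inductive hypothesis (smaller nesting, by Corollary \ref{Lemma:ordinalsboundnesting}), and assembling the verdict using rules \ref{r:3} and \ref{r:5}. One has to be careful that the $\unlhd$/$\lhd$ bookkeeping matches the $+1$ shift appearing in Theorem \ref{TheoTranOrder}\ref{TheoTranOrderItSum} (the "$o(A)+1+o(B)$" for $o(B\la 0\ra A)$), which is the combinatorial content distinguishing $b\mathrel\lhd a$ from $b\mathrel\unlhd a$. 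Everything else — the axiom-matching in step (1), the $\wedge$-commutation in step (3) — is routine once that sub-lemma is in hand.
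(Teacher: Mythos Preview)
Your plan is workable in outline but takes a longer route than the paper, and as stated there is a circularity between steps (2) and (3) that you would need to resolve. The paper's proof does not isolate a worm-level sub-lemma at all: it runs a single main induction on ${\sf Nt}(\varphi)$, with a subsidiary induction on the length of the $\RC$-derivation of $\varphi^\tau \vdash_{\RC} \psi^\tau$. When the derivation ends in an axiom involving an ordinal comparison (Axioms \ref{RCtrans}, \ref{RCmon}, \ref{RCconj}), the modality indices are bracket words $a,b$ of nesting strictly less than ${\sf Nt}(\varphi)$, so the \emph{main} induction hypothesis, applied to the sequent $a^\tau \vdash_{\RC} (\ob\cb b)^\tau$ between worms --- which are themselves $\form$-formulas --- directly delivers $a \vdash_\BC \ob\cb b$, hence $b \mathrel\lhd a$. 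The cut rule is handled by sending the intermediate $\chi$ to $\chi^\iota$ (Remark \ref{remark:iotatau} gives $(\chi^\iota)^\tau = \chi$, so the shorter sub-derivations are literally unchanged) and bounding its nesting via Lemma \ref{Lemma:nestingBoundDerivability}. The simplification you miss is that step (2) never needs to be separated out: worms are formulas, so a single nested induction handles them together with everything else.

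The concrete gap in your decomposition is that step (2) is stated only for $a,b \in {\sf NF}$, while step (3) invokes $a \equiv_\BC a_{o^\ast(a)}$ for \emph{arbitrary} $a \in \wormsBC$. Since such $a$ is generally not in ${\sf NF}$, your sub-lemma does not apply, and you would need a separate $\BC$-normalization lemma. That in turn requires $\mathrel\unlhd$-comparisons involving non-normal worms --- precisely what you were trying to extract from step (2). This can be untangled (e.g.\ normalize modality indices by structural recursion, then mimic the $\RC$-level ${\sf BNF}$ normalization inside $\BC$, using step (2) on the now-${\sf NF}$ indices to supply the $\mathrel\lhd$ premises), but carrying it out means re-deriving both the ${\sf BNF}$ comparison algorithm and the ${\sf BNF}$ normalization procedure internally to $\BC$. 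The paper's unified induction sidesteps all of this by simply following the given $\RC$-derivation step by step and letting $\RC$ do the comparison work.
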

\begin{proof}
The right-to-left direction is given by Lemma \ref{lemmBCtoRC}, so we focus on the other.
Proceed by induction on ${\sf Nt}(\varphi)$. For the base case, assume ${\sf Nt}(\varphi) = 0$ and $\varphi^\tau \vdash_{\RC} \psi^\tau$. By a subsidiary induction on the length of the derivation of $\varphi^\tau \vdash_{\RC} \psi^\tau$, we set to prove $\varphi \vdash_\BC \psi$. If the derivation has length one it suffices to check $\RC$-Axioms \ref{RCax} and \ref{RCconjel}, which is immediate. If it has length greater than one it must end in a rule. The case for $\RC$-Rule \ref{Rr:1} follows by the I.H.. For $\RC$-Rule \ref{Rr:2}, we have that there is $\chi \in \mathbb{F}_{\Gamma_0}$ such that $\varphi^\tau \vdash_{\RC} \chi$ and $\chi \vdash_{\RC} \psi^\tau$. By Remark \ref{remark:iotatau} and Lemma \ref{Lemma:nestingBoundDerivability}, we get that $\varphi^\tau \vdash_{\RC} (\chi^\iota)^\tau$ and $(\chi^\iota)^\tau \vdash_{\RC} \psi^\tau$ with ${\sf Nt}(\chi^\iota) = 0$.  Thus, by the subsidiary I.H., $\varphi \vdash_\BC \chi^\iota$ and $\chi^\iota \vdash_\BC \psi$ and by $\BC$-Rule \ref{r:2}, $\varphi \vdash_\BC \psi$.

For the inductive step, let ${\sf Nt}(\varphi) = n + 1$. We proceed by a subsidiary induction on the length of the derivation. If $\varphi^\tau \vdash_{\RC} \psi^\tau$ is obtained by means of $\RC$-Axioms \ref{RCax} and \ref{RCconjel}, then clearly $\varphi \vdash_\BC \psi$. If  $\varphi^\tau \vdash_{\RC} \psi^\tau$ is an instance of $\RC$-Axiom \ref{RCtrans}, then we have that $\varphi := \ob a \cb \ob b \cb \chi$ and $\psi := \ob c \cb \chi$ for some $\chi \in \form$ and $a, \, b, \, c \in \wormsBC$ such that $o^\ast(a) = o^\ast(b) = o^\ast(c)$. Hence, $a^\ast \vdash_{\RC} b^\ast$ and $b^\ast \vdash_{\RC} c^\ast$. Since ${\sf Nt}(w) < n + 1$ for $w \in \{a, \, b, \, c \}$, by the main I.H. we have that $a \vdash_\BC b$ and $b \vdash_\BC c$. Thus, we have the following $\BC$-derivation:
\begin{prooftree}
\AxiomC{$\chi \vdash \chi$}
\AxiomC{$b \vdash c$}
\RightLabel{\scriptsize(Rule \ref{r:3})}
\BinaryInfC{$\ob b \cb \chi \vdash \ob c \cb \chi$}
\RightLabel{\scriptsize(Rule \ref{r:3})}
\AxiomC{$a \vdash b$}
\BinaryInfC{$\ob a \cb \ob b \cb \chi \vdash \ob b \cb \ob c \cb \chi$}
\RightLabel{\scriptsize(Rule \ref{r:3})}
\AxiomC{$\chi \vdash \chi$}
\AxiomC{$b \vdash c$}
\BinaryInfC{$\ob b \cb \ob c \cb \chi \vdash \ob c \cb \chi$}
\RightLabel{\scriptsize(Rule \ref{r:2})}
\BinaryInfC{$\ob a \cb \ob b \cb \chi \vdash \ob c \cb \chi$}
\end{prooftree}
If  $\varphi^\tau \vdash_{\RC} \psi^\tau$ is obtained by using $\RC$-Axiom \ref{RCmon}, then $\varphi := \ob a \cb \chi$ and $\psi := \ob b \cb \chi$ for some $\chi \in \form$ and $a, \, b,  \in \wormsBC$ with $o^\ast(a) > o^\ast(b)$. Therefore $a^\ast \vdash_{\RC} \la 0 \ra b^\ast$ and since $\varphi := \ob a \cb \chi$, we have that ${\sf Nt}(a) < {\sf Nt}(\varphi)$. Thus, by the main I.H. $a \vdash_\BC \ob \cb b$ and by $\BC$-Rule \ref{r:3}, $\ob a \cb \chi \vdash_\BC \ob b \cb \chi$. If  $\varphi^\tau \vdash_{\RC} \psi^\tau$ is an instance of $\RC$-Axiom \ref{RCconj}, then we have that $\varphi := \ob a \cb \chi_0 \, \land \, \ob b \cb \chi_1$ and $\psi := \ob a \cb \big(\, \chi_0 \, \land \, \ob b \cb \chi_1\, \big)$, for some $\chi_0, \, \chi_1 \in \form$ and $a, \, b \in \wormsBC$ with $o^\ast(a) > o^\ast(b)$. Since ${\sf Nt}(a) < {\sf Nt}(\varphi)$ and $a^\ast \vdash_{\RC} \la 0 \ra b^\ast$, by the main I.H. we obtain that $a \vdash_\BC \ob \cb b$ and by applying $\BC$-Rule \ref{r:5}, $\ob a \cb \chi_0 \, \land \, \ob b \cb \chi_1 \vdash \ob a \cb \big( \, \chi_0  \, \land \, \ob b \cb \chi_1\, \big)$. Regarding rules, $\RC$-Rule \ref{Rr:1} is immediate and $\RC$-Rule \ref{Rr:3} follows an analogous reasoning to that of Axiom \ref{RCmon}. This way, we only check $\RC$-Rule \ref{Rr:2}. Assume $\varphi^\tau \vdash_{\RC} \psi^\tau$ is obtained by an application of $\RC$-Rule \ref{Rr:2}. Then, there is $\chi \in \mathbb{F}_{\Gamma_0}$ such that $\varphi^\tau \vdash_{\RC} \chi$ and $\chi \vdash_{\RC} \psi^\tau$. By Remark \ref{remark:iotatau} together with Lemma \ref{Lemma:nestingBoundDerivability} we obtain that $\varphi^\tau \vdash_{\RC} (\chi^\iota)^\tau$ and $(\chi^\iota)^\tau \vdash_{\RC} \psi^\tau$ with ${\sf Nt}(\chi) \leq n+1$. By the subsidiary I.H. $\varphi \vdash_\BC \chi^\iota$ and $\chi^\iota \vdash_\BC \psi$ and hence, by $\BC$-Rule \ref{r:2}, $\varphi \vdash_\BC \psi$.
\end{proof}

With this we obtain our main result: an autonomous calculus for representing ordinals below $\Gamma_0$.

\begin{Theorem}
$({\sf NF},\unlhd)$ is a well-order of order-type $\Gamma_0$.
\end{Theorem}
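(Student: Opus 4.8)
The plan is to transfer the known well-order structure on $\worms{\Gamma_0} \cap {\sf BNF}$ (from Theorem~\ref{TheoBNF} and the remark following it) to $({\sf NF}, \unlhd)$ via the translations $\ast$ and $\iota$, using the preservability theorem (Theorem~\ref{theoPreserve}) to bridge $\BC$-derivability and $\RC$-derivability. First I would unwind the definition of $\unlhd$ restricted to $\wormsBC$: for worms $a, b$, $b \unlhd a$ means $a \vdash_\BC \ob\cb b$ or $a \vdash_\BC b$, which by Theorem~\ref{theoPreserve} is equivalent to $a^\tau \vdash_\RC \la 0\ra b^\tau$ or $a^\tau \vdash_\RC b^\tau$, i.e.\ $a^\ast \vdash_\RC \la 0\ra b^\ast$ or $a^\ast \vdash_\RC b^\ast$; by Corollary~\ref{corLeq} this is exactly $o(a^\ast) \geq o(b^\ast)$, that is, $o^\ast(a) \geq o^\ast(b)$. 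So $\unlhd$ on $\wormsBC$ pulls back the ordering of ordinals under $o^\ast$.

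Next I would show $o^\ast \colon {\sf NF} \to \Gamma_0$ is a bijection: surjectivity and injectivity are precisely the content of Beklemishev's normal form theorem (the displayed ``Theorem~[Beklemishev]'' stating that each $\al \in \Gamma_0$ has a unique associated $a_\al \in {\sf NF}$ with $o^\ast(a_\al) = \al$), and one checks these $a_\al$ exhaust ${\sf NF}$ — any $a \in {\sf NF}$ equals $a_{o^\ast(a)}$ by uniqueness. Combining this bijection with the previous paragraph, $o^\ast$ is an order-isomorphism from $({\sf NF}, \unlhd)$ onto $(\Gamma_0, \leq)$: for $a, b \in {\sf NF}$, $b \unlhd a \iff o^\ast(b) \leq o^\ast(a)$. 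Since $(\Gamma_0, \leq)$ is a well-order of order-type $\Gamma_0$, so is $({\sf NF}, \unlhd)$. I would remark that totality and well-foundedness could alternatively be cited from Theorem~\ref{TheoWormsWO} applied to $\worms{\Gamma_0}$, but routing through the ordinals is cleanest.

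The one point requiring genuine care is the equivalence ``$b \unlhd a$ (in the $\BC$ sense) $\iff$ $o^\ast(b) \leq o^\ast(a)$'' in both directions. The forward direction uses Lemma~\ref{lemmBCtoRC} and Corollary~\ref{corLeq} as above. For the converse — given $o^\ast(a) \geq o^\ast(b)$, produce a $\BC$-derivation witnessing $b \unlhd a$ — I would argue that by Corollary~\ref{corLeq}, $a^\ast \vdash_\RC b^\ast$ or $a^\ast \vdash_\RC \la 0\ra b^\ast$; the first gives $a^\tau \vdash_\RC b^\tau$ and the second $a^\tau \vdash_\RC (\ob\cb b)^\tau$, so by Theorem~\ref{theoPreserve} either $a \vdash_\BC b$ or $a \vdash_\BC \ob\cb b$, which is exactly $b \unlhd a$. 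So the main obstacle is really just bookkeeping between the two calculi, all of which is supplied by Theorem~\ref{theoPreserve}; no new combinatorial work is needed, and the proof is short.

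\begin{proof}
By Theorem~\ref{theoPreserve} and the definitions of $\unlhd$ and $\tau$, for $a,b\in\wormsBC$ we have $b\unlhd a$ if and only if $a^\ast\vdash_\RC b^\ast$ or $a^\ast\vdash_\RC\la 0\ra b^\ast$, which by Corollary~\ref{corLeq} holds if and only if $o(a^\ast)\geq o(b^\ast)$, i.e.\ $o^\ast(a)\geq o^\ast(b)$. By Beklemishev's normal form theorem, $o^\ast$ restricts to a bijection between ${\sf NF}$ and $\Gamma_0$: every $\al\in\Gamma_0$ equals $o^\ast(a_\al)$ with $a_\al\in{\sf NF}$ unique with this property, and conversely any $a\in{\sf NF}$ equals $a_{o^\ast(a)}$. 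Hence $o^\ast\colon({\sf NF},\unlhd)\to(\Gamma_0,\leq)$ is an order-isomorphism, and since $(\Gamma_0,\leq)$ is a well-order of order-type $\Gamma_0$, so is $({\sf NF},\unlhd)$.
\end{proof}
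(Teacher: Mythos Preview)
Your proof is correct and follows essentially the same strategy as the paper: use Theorem~\ref{theoPreserve} and Corollary~\ref{corLeq} to identify $\unlhd$ with the ordinal comparison $o^\ast(b)\leq o^\ast(a)$, then establish that $o^\ast\colon{\sf NF}\to\Gamma_0$ is a bijection. The only notable difference is that you obtain bijectivity in one stroke from the cited Beklemishev normal form theorem, whereas the paper reconstructs the pieces separately---invoking Lemma~\ref{LemmOUnique} to identify $o^\ast$ as the order-type function, Proposition~\ref{proposition:nestedn} together with the sequence $a_{n+1}=\ob a_n\cb$ to pin down the range as exactly $\Gamma_0$, and a direct induction on nesting depth (via Theorem~\ref{TheoBNF}) to verify antisymmetry on ${\sf NF}$. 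Your route is shorter and entirely legitimate given what the paper has already stated; the paper's route is slightly more self-contained in that it does not lean as heavily on the black-box normal form theorem.
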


\proof
By Theorem \ref{theoPreserve}, $a \mathrel \lhd b$ if and only if $ b^\tau \vdash_{\mathbf{RC}_{\Gamma_0}} \la 0 \ra a^\tau $ if and only if $o^*(a) < o^*(b)$.
Moreover if $\xi< o^*(a)$ then since $o^*(a) = o(a^\tau)$ and $o$ is the order-type function on $\sf BNF$, by item \ref{itOUniqueTwo} of Lemma \ref{LemmOUnique} there is some $B \wle{} a^\tau$ such that $\xi = o(B)$, hence in view of Remark \ref{remark:iotatau}, $\xi = o^* (B^\iota)$.
Thus by Lemma \ref{LemmOUnique}, $o^*$ is the order-type function on $\sf NF$.
That the range of $o^*$ is $\Gamma_0$ follows from Proposition \ref{proposition:nestedn} which tells us that $o^*(a) < h({\sf N}(a)+1) < \Gamma_0$ for all $a \in \wormsBC$, while if we define recursively $a_0 = \top$ and $a_{n+1} = \ob a_n\cb$, Theorem \ref{TheoTranOrder} and an easy induction readily yield $\Gamma_0 = \lim_{n\to \infty} h(n) = \lim_{n\to \infty} o^*(a_n)$.

It remains to check that $\unlhd $ is antisymmetric.
If $a,b\in {\sf NF}$ and $a\equiv_{\BC} b$, then $a^\tau \equiv_{\RC} b^\tau $ and are in $\sf BNF$.
By Theorem \ref{TheoBNF}, $a^\tau = b^\tau $.
Writing $a = \ob a_1\cb \ldots \ob a_n\cb$ and $b = \ob b_1\cb \ldots \ob b_m\cb$, it follows that $n=m$ and that for $0<i\leq n$, $o^*(a_i) = o^*(b_i)$.
It follows that $a_i^\tau \equiv_{\RC} b_i^\tau$, so that $a_i \equiv_\BC b_i$.
Induction on nesting depth yields $a_i = b_i$, hence $a=b$.
\endproof

\section{The Bracket Principle}

In this section, we adapt Beklemishev's system of fundamental sequences for worms \cite{Beklemishev:2004:ProvabilityAlgebrasAndOrdinals} to elements of $\wormsBC$.
These are sequences $(a\{n\})_{n\in\mathbb N}$ which converge to $a$ with respect to the $\lhd$ ordering whenever $a$ is a limit worm.
Beklemishev uses these fundamental sequences to present a combinatorial statement independent of $\sf PA$.
As we will see, these fundamental sequences generalize smoothly to $\wormsBC$ and provide an independent statement for the theory $\atr$ of Arithmetical Transfinite Recursion, which we briefly recall in the next section. 

Let $a \in \wormsBC$ and write $a=\ob a_1\cb \ldots \ob a_m\cb$, with $m\geq 0$. Then, consider the following cases for arbitrary $n$.
\begin{enumerate}

\item $\top \{n\} = \top$.

\item $a \{n\} = \ob a_2\cb \ldots \ob a_k\cb$ if $a_1=\top$.

\item If $\min (a) \neq a_1$, let $\ell $ be least such that $a_\ell \mathrel\lhd a_1$, otherwise let $\ell = k+1$.
Let $b=\ob a_1 \{n\}  \cb \ldots \ob a_{\ell-1}\cb $ and $ c = \ob a_{\ell }\cb \ldots \ob a_k\cb$ (with $c$ possibly empty).
Then,
$a\{n\} = b^{n+1} c$, where $b^n$ is inductively defined as $\top$ for $n = 0$ and $b\, b^{n'}$ for $n = n' + 1$. 
\end{enumerate}

\begin{Remark}
Beklemishev's definition of fundamental sequences is almost identical, except that each $a_i$ is a natural number and $a_1 \{n\}$ is replaced by $ a_1 - 1 $ when $a_1>0$.
Note, however, that $(\ob \cb^{k+1} )\{n\} = \ob \cb^{k } $, so the two definitions coincide if we represent natural numbers as elements of $\wormsBC$.
\end{Remark}

\begin{Proposition}
If $a\neq \top $, then $a \{n\} \mathrel \lhd a$. \label{AgtAn}
\end{Proposition}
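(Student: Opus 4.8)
The plan is to proceed by induction on the structure of $a$, following the three cases in the definition of $a\{n\}$, and in each case verifying $a\{n\} \mathrel\lhd a$, i.e.\ that $a \vdash_\BC \ob\cb\, a\{n\}$. By Theorem~\ref{theoPreserve} this is equivalent to showing $o^*(a\{n\}) < o^*(a)$, and since $o^*(a) = o(a^\ast)$ with $o$ the order-type function on $\sf BNF$ (Theorem~\ref{TheoTranOrder}), the cleanest route is to work on the ordinal side: compute $o^*(a\{n\})$ and $o^*(a)$ using Theorem~\ref{TheoTranOrder} and check the strict inequality. So really the proof is a translation of Beklemishev's original verification for worms with natural-number entries \cite{Beklemishev:2004:ProvabilityAlgebrasAndOrdinals} into the bracket setting, using the $\ast$-translation and the remark that Beklemishev's fundamental sequences coincide with ours when naturals are coded by elements of $\wormsBC$.

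The cases break down as follows. Case~1, $a = \top$, is vacuous. Case~2, $a = \ob\cb\, \ob a_2\cb\ldots\ob a_k\cb$ (so $a_1 = \top$): here $a^\ast = \la 0\ra (\ob a_2\cb\ldots\ob a_k\cb)^\ast$, and writing $B = (\ob a_2\cb\ldots\ob a_k\cb)^\ast = (a\{n\})^\ast$, Theorem~\ref{TheoTranOrder}(\ref{TheoTranOrderItSum}) gives $o^*(a) = o(\la 0\ra B) = o(B) + 1 + o(\top) = o(B)+1 > o(B) = o^*(a\{n\})$, which is exactly what we need. Case~3 is the substantive one: with $\al = \min(a) = o^*(a_1)$, we split $a$ at the least position $\ell$ where $a_\ell \mathrel\lhd a_1$ (if any, else $\ell = k+1$), so that $a^\ast = (\al\uparrow D)\, \la \al'\ra\, C^\ast$ for an appropriate decomposition, where $D$ corresponds to the prefix $\ob a_1\cb\ldots\ob a_{\ell-1}\cb$ with its entries shifted down by $\al$, and $\al' < \al$ is $o^*(a_\ell)$ (or the tail is empty). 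Using Theorem~\ref{TheoTranOrder}(\ref{TheoTranOrderItThree}) to handle the $e^\al$ factor and (\ref{TheoTranOrderItSum}) to peel off the $\la \al'\ra$ boundary, $o^*(a)$ takes the form $e^\al\big(o^*(\text{prefix})\big) + 1 + o^*(c)$ (with suitable conventions when $c$ is empty), while $o^*(a\{n\}) = o^*(b^{n+1}c)$ is built from $n+1$ copies of the block $b = \ob a_1\{n\}\cb\ldots\ob a_{\ell-1}\cb$ stacked before $c$. The point is that each $b$-block has $o^*$-value below $e^\al 1 \le e^\al(o^*(\text{prefix}))$ because its leading entry $a_1\{n\}$ has $o^*(a_1\{n\}) < o^*(a_1) = \al$ by the induction hypothesis applied to $a_1$, so by Lemma~\ref{LemmLowerBound} and additive indecomposability of $e^\al(\cdot)$ (for $\al > 0$) the whole stack of $n+1$ blocks over $c$ stays strictly below $e^\al(o^*(\text{prefix})) + 1 + o^*(c)$; when $\al = 0$ one falls back on the Case~2-style $+1$ gap instead.

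I would set this up by first recording, as a sublemma or inside the proof, the exact hyperexponential formula for $o^*$ on a word decomposed at its minimal modality — essentially an instance of the Cantor-normal-form reading of $\sf BNF$ via Theorem~\ref{TheoTranOrder} — so that the Case~3 bookkeeping becomes a finite ordinal arithmetic computation rather than an ad hoc argument. The induction hypothesis is used only to pass from $a_1$ to $a_1\{n\}$ (and, in Case~2, is not even needed).

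The main obstacle I anticipate is the bookkeeping in Case~3: correctly tracking the $\al\uparrow(\cdot)$ shift through the prefix, handling the boundary where $a_\ell \mathrel\lhd a_1$ versus the case $\ell = k+1$ (empty tail $c$), and making the additive-indecomposability estimate tight enough that $n+1$ copies of the $b$-block still fit strictly below $o^*(a)$. The degenerate subcases — $\al = 0$, $a_1 = \top$ overlapping with Case~2, $\ell - 1 = 0$, $c = \top$ — each need a quick sanity check, and the cleanest way to avoid proliferating them is to push everything through Theorem~\ref{TheoTranOrder} and Lemma~\ref{LemmLowerBound} uniformly on the ordinal side, invoking Theorem~\ref{theoPreserve} only at the very end to convert $o^*(a\{n\}) < o^*(a)$ back into $a\{n\} \mathrel\lhd a$.
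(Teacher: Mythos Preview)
Your route --- converting to the ordinal side via Theorem~\ref{theoPreserve} and checking $o^\ast(a\{n\}) < o^\ast(a)$ using Theorem~\ref{TheoTranOrder} --- is genuinely different from the paper's. The paper works entirely inside $\BC$, by induction on ${\sf N}(a)$: from the induction hypothesis $a_1\{n\} \mathrel\lhd a_1$ and Rule~\ref{r:3} it obtains $a \vdash b\,c$, and then iteratively applies Rules~\ref{r:5} and~\ref{r:3} to absorb a copy of $\ob a_1\cb\ldots\ob a_{\ell-1}\cb$ in front of the current $b^m c$ (pulling the leading modalities past the conjunction, then weakening $a_1$ to $a_1\{n\}$), arriving at $a \vdash \ob\cb\, b^{n+1} c$ without computing a single ordinal.

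Your Case~3 sketch, however, contains a real gap. You assert that each $b$-block has $o^\ast$-value below $e^\al 1$, appealing to Lemma~\ref{LemmLowerBound} on the grounds that the leading entry $a_1\{n\}$ has $o^\ast$-value below $\al$. But Lemma~\ref{LemmLowerBound}\eqref{LemmLowerBoundItTwo} bounds $o^\ast(b)$ by $o(\la\mu\ra\top)$ only when the \emph{greatest} entry of $b$ is $<\mu$; the leading entry is irrelevant. Since $b = \ob a_1\{n\}\cb\ob a_2\cb\ldots\ob a_{\ell-1}\cb$ with $o^\ast(a_j) \geq \al$ for all $2 \leq j < \ell$, the bound fails as soon as $\ell \geq 3$. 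Concretely, take $a = \ob\ob\cb\cb\ob\ob\ob\cb\cb\cb$: here $\al = 1$, $b = \ob\cb\ob\ob\ob\cb\cb\cb$ (for any $n$), and $o^\ast(b) = \varepsilon_0 + 1 \gg \omega = e^1 1$. The ordinal approach can be rescued, but the correct estimate is finer: with $P^\ast = \al\uparrow(\la 0\ra D')$ one has (for $c = \top$) $o^\ast(a) = e^\al(o(D')+1)$, which is additively indecomposable since $\al > 0$ in Case~3, and one must compute $o^\ast(b^{n+1})$ by splitting at the $\la\gamma\ra$'s and check that each resulting summand lies strictly below this value. That is doable but is substantially more bookkeeping than your sketch suggests, and the case $c \neq \top$ adds another layer; the paper's syntactic argument avoids all of it.
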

\begin{proof}
We proceed by induction on the nesting of $a$, ${\sf N}(a)$. For ${\sf N}(a) = 1$, we have that $a:= \ob \cb^{k+1} \top$ for some $k < \omega$. Moreover, for any $n$, we have that $a\{n\} = \ob \cb^k \top$. Thus, we have that $a \vdash a$ and $a = \ob \cb a\{n\}$, that is, $a\{n\} \mathrel \lhd a$.

For ${\sf N}(a) = m + 1$, if $a$ is of the form $\ob \cb a'$, we reason as in the previous case. Otherwise, let $a$ be of the form $\ob a_1\cb \ldots \ob a_k\cb$ with $a_1 \neq \top$, then by the I.H. we have that $a_1 \vdash \ob \cb a_1\{ n \}$. Thus,
\begin{center}
\def\fCenter{\ \vdash \ }
\Axiom $\ob a_2\cb \ldots \ob a_k\cb \fCenter \ob a_2\cb \ldots \ob a_k\cb$
\Axiom $a_1 \fCenter \ob \cb a_1\{ n \}$
\def\extraVskip{2.5pt}
\RightLabel{\scriptsize(Rule \ref{r:3})}
\BinaryInf $\ob a_1\cb \ldots \ob a_k\cb \fCenter  \ob a_1\{ n \} \cb \ldots \ob a_k \cb$
\UnaryInf $a \fCenter \ob a_1\{ n \} \cb \ldots \ob a_k \cb$
\DisplayProof
\end{center}
It follows that
\[
a \vdash \ob a_1\{ n \} \cb \ldots \ob a_{\ell-1} \cb \ob a_\ell \cb \ldots \ob a_k\cb.
\]
where $\ell$ is the least such that $a_\ell \mathrel\lhd a_1$ (recall that if $a_1 = \min(a)$, then $\ell = k+1$, and the part $\ob a_\ell \cb \ldots \ob a_k\cb$ is empty). Hence, we get that:
\begin{itemize}
\item $a_1\{n\} \mathrel\lhd a_1$ and
\item $a_1 \mathrel \unlhd a_j$ for $j, \ 1 \, {\leq} \, j \, {<} \, \ell$,
\end{itemize}
and so, $a_1\{n\} \mathrel\lhd a_j$. We can reason as follows:

\begin{prooftree}

\AxiomC{$a \vdash \ob a_1\{ n \} \cb \ldots \ob a_{\ell-1} \cb \ob a_\ell \cb \ldots \ob a_k\cb$}
\AxiomC{$a \vdash \ob a_1 \cb \ldots \ob a_{\ell-1} \cb$}
\def\extraVskip{2.5pt}
\RightLabel{\scriptsize(Rule \ref{r:1})}
\BinaryInfC{$a \vdash \ob a_1\{ n \} \cb \ldots \ob a_{\ell-1} \cb \ob a_\ell \cb \ldots \ob a_k\cb \land \ob a_1 \cb \ldots \ob a_{\ell-1} \cb$}
\end{prooftree}

Let $a' := \ob a_1\{ n \} \cb \ldots \ob a_{\ell-1} \cb \ob a_\ell \cb \ldots \ob a_k\cb$. Combining the fact that $a_j \vdash \ob \cb a_1\{n\}$  for any $j, \ 1 \, {\leq} \, j \, {<} \, \ell$, we have that we can iteratively apply Rule \ref{r:5} and Rule \ref{r:3} to place the modalities  in $\ob a_1 \cb \ldots \ob a_{\ell-1} \cb$ outside the conjunction. More precisely, we start by applying Rule \ref{r:3}, obtaining 
\begin{prooftree}
\AxiomC{$a \vdash \ob a_1 \cb \ldots \ob a_{\ell-1} \cb \land a'$}
\def\extraVskip{2.5pt}
\RightLabel{\scriptsize(Rule \ref{r:3})}
\UnaryInfC{$a \vdash \ob a_1 \cb  \Big( \ob a_2 \cb \ldots \ob a_{\ell-1} \cb  \land a' \Big)$.}
\end{prooftree}
We can observe that since $a_j \vdash \ob \cb a_1\{n\}$ for $j, \ 1 \, {\leq} \, j \, {<} \, \ell$, 
\[
\ob a_2 \cb \ldots \ob a_{\ell-1} \cb  \land a' \vdash \ob a_2 \cb  \Big( \ob a_3 \cb \ldots \ob a_{\ell-1} \cb  \land a' \Big)
\]
These two last inferences can be combined by means of the Rule \ref{r:5} from which we get:
\[
a \vdash \ob a_1 \cb  \ob a_2 \cb  \Big( \ob a_3 \cb \ldots \ob a_{\ell-1} \cb  \land a' \Big).
\]
Iterating this process we get that
\[
a \vdash \ob a_1 \cb \ldots \ob a_{\ell-1} \cb a'
\]
and since, $a_1 \vdash \ob \cb a_1\{n\}$ by Rule \ref{r:3} we get that
\[
a \vdash \ob a_1\{n\} \cb \ldots \ob a_{\ell-1} \cb a'
\]
that is,
\[
a \vdash \ob a_1\{n\} \cb \ldots \ob a_{\ell-1} \cb a' \ob a_1\{ n \} \cb \ldots \ob a_{\ell-1} \cb \ob a_\ell \cb \ldots \ob a_k\cb.
\]
This whole last part of the argument can be iterated, obtaining that 
\[
a \vdash \big( \ob a_1\{ n \} \cb \ldots \ob a_{\ell-1} \cb \big)^{n+1} \ob a_\ell \cb \ldots \ob a_k\cb.
\] 
Thus,

\begin{center}
\def\fCenter{\ \vdash \ }
\Axiom$a \fCenter \big( \ob a_1\{ n \} \cb \ldots \ob a_{\ell-1} \cb \big)^{n+1} \ob a_\ell \cb \ldots \ob a_k\cb \land \ob a_1 \cb$
\def\extraVskip{2.5pt}
\RightLabel{\scriptsize(Rules \ref{r:5} and \ref{r:3})}
\UnaryInf$a \fCenter \ob a_1 \cb \big( \ob a_1\{ n \} \cb \ldots \ob a_{\ell-1} \cb \big)^{n+1} \ob a_\ell \cb \ldots \ob a_k\cb$
\RightLabel{\scriptsize(Rule \ref{r:3})}
\UnaryInf$a \fCenter \ob  \cb \big( \ob a_1\{ n \} \cb \ldots \ob a_{\ell-1} \cb \big)^{n+1} \ob a_\ell \cb \ldots \ob a_k\cb$
\DisplayProof
\end{center}
Therefore, we can conclude that $a\{n\} \mathrel\lhd a$.
\end{proof}

Now, let us define for $a\in \wormsBC$ and $n\in \mathbb N$, a new worm $a\lBrace n\rBrace \in \wormsBC$ recursively by $a\lBrace 0\rBrace= a$ and $a\lBrace n+1\rBrace= a\lBrace n \rBrace \{n+1\}$.

\begin{Theorem}\label{theoBracketPrinciple}
For each $a \in \wormsBC$, there is $ i \in\mathbb N$ such that $a\lBrace i \rBrace= \top$.
\end{Theorem}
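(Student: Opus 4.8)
The plan is to reduce the statement to the well-foundedness of $\lhd$ on $\wormsBC$ modulo equivalence, which we obtain from the preceding sections. First I would observe that, by Theorem \ref{theoPreserve} together with Remark \ref{remark:iotatau}, the map $a \mapsto o^*(a)$ is an order-preserving map from $(\wormsBC, \lhd)$ onto an initial segment of $\Gamma_0$; in particular, if $a \lhd a'$ then $o^*(a) < o^*(a')$, so the relation $\lhd$ on $\wormsBC$ is well-founded. Then the key point is that each step of the process $a\lBrace n \rBrace \mapsto a\lBrace n+1 \rBrace$ strictly decreases $o^*$ until $\top$ is reached: by Proposition \ref{AgtAn}, if $a\lBrace n\rBrace \neq \top$ then $a\lBrace n+1\rBrace = a\lBrace n\rBrace\{n+1\} \lhd a\lBrace n\rBrace$, hence $o^*(a\lBrace n+1\rBrace) < o^*(a\lBrace n\rBrace)$.

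Thus I would argue as follows: suppose toward a contradiction that for a fixed $a$ we had $a\lBrace i\rBrace \neq \top$ for every $i$. Then $\big(o^*(a\lBrace i\rBrace)\big)_{i\in\mathbb N}$ is a strictly decreasing sequence of ordinals below $\Gamma_0$, contradicting the well-foundedness of the ordinals. Therefore there is some $i$ with $a\lBrace i\rBrace = \top$. (Once $a\lBrace i\rBrace = \top$ we also have $a\lBrace j\rBrace = \top$ for all $j \geq i$ by the first clause of the definition of $a\{n\}$, though this is not needed for the statement as phrased.)

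I do not expect a serious obstacle here: all the genuine work has been done in Proposition \ref{AgtAn} (that $a\{n\} \lhd a$ for $a \neq \top$) and in Theorem \ref{theoPreserve} (which lets us pass from $\lhd$-descent to ordinal-descent). The only point requiring a small amount of care is the translation between the relation $\lhd$ on $\wormsBC$ and the strict ordering of ordinals $o^*$; this is handled by combining Lemma \ref{lemmBCtoRC}, Theorem \ref{theoPreserve}, Corollary \ref{corLeq}, and the identity $o^*(a) = o(a^\tau)$ from Remark \ref{remark:iotatau}. So the argument is essentially a one-line appeal to well-foundedness once these ingredients are in place.
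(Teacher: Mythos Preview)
Your proposal is correct and follows essentially the same approach as the paper: assume no $i$ gives $\top$, apply Proposition~\ref{AgtAn} to obtain $a\lBrace i+1\rBrace \mathrel\lhd a\lBrace i\rBrace$ for all $i$, and conclude that $\big(o^\ast(a\lBrace i\rBrace)\big)_{i\in\mathbb N}$ is an infinite descending chain of ordinals, a contradiction. The paper's proof is a bit terser about why $\lhd$ implies strict descent in $o^\ast$, but your extra justification via Lemma~\ref{lemmBCtoRC} and Remark~\ref{remark:iotatau} is fine and does not constitute a different route.
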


\begin{proof}
Assume towards a contradiction that there is no $i \in \mathbb{N}$ such that $a\lBrace i \rBrace = \top$. Then, Proposition \ref{AgtAn} yields $a\lBrace i + 1 \rBrace \mathrel\lhd a\lBrace i \rBrace$ for all $i$, and thus $\big(o^\ast (a\lBrace i \rBrace)\big)_{i\in \mathbb{N}}$ defines an infinite descending chain of ordinals, contradicting the well-foundedness of $\Gamma_0$.
\end{proof}

\section{Independence}

We will now show that Theorem \ref{theoBracketPrinciple} is independent of $\atr$.
The theory $\atr$ is defined in the language of second order arithmetic, which extends the language of $\sf PA$ with a new sort of variables $X,Y,Z$ for sets of natural numbers with atomic formulas $t\in X$ and quantification $\forall X \varphi(X)$ ranging over sets of natural numbers.
Using coding techniques, the language of arithmetic suffices to formalize many familiar mathematical notions such as real numbers and continuous functions on the real line.
More relevant to us, (countable) well-orders and Turing jumps can be formalized in this context.
The particulars are not important for our purposes, but these notions are treated in detail in \cite{Simpson:2009:SubsystemsOfSecondOrderArithmetic}.

The theory $\mathbf{ACA}_0$ is the second order analogue of Peano Arithmetic, defined by extending Robinson's $\sf Q$ with the induction axiom stating that every non-empty set has a least element and the axiom scheme stating that $\{x\in \mathbb N : \varphi (x)\}$ is a set, where $\varphi $ does not contain second order quantifiers but possibly contains set-variables.
Equivalently, $\mathbf{ACA}_0$ can be defined with an axiom that states that for every set $X$, the Turing jump of $X$ exists.
The theory of Arithmetical Transfinite Recursion, $\atr$, can then be obtained by extending $\mathbf{ACA}_0$ with an axiom stating that for every set $X $ and well-order $\alpha$, the $\alpha^{\rm th}$ Turing jump of $X$ exists.
This theory is related to predicative mathematics \cite{Feferman:1964:SystemsOfPredicativeAnalysis}, and discussed in detail in \cite{Simpson:2009:SubsystemsOfSecondOrderArithmetic}.

The proof-theoretic ordinal of $\atr$ is $\Gamma_0$.
In order to make this precise, we need to study Veblen hierarchies in some more detail; recall that we have defined them in Section \ref{secHyper}.
The Veblen normal form of $\xi>0$ is the unique expression $\upphi_\alpha \beta + \gamma = \xi$ such that $\gamma<\xi $ and $\beta <\upphi_\alpha \beta$. We will call this the {\em Veblen normal form} of $\xi$ and write $\xi \vnf \upphi_\alpha \beta + \gamma$.
The order relation between elements of $\Gamma_0$ can be computed recursively on their Veblen normal form.
Below we consider only $\xi,\zeta>0$, as clearly $0 < \ot_\alpha\beta + \gamma$ regardless of $\alpha,\beta,\gamma$.
The following is found in e.g.~\cite{Pohlers:2009:PTBook}.

\begin{Lemma}
Given $\xi,\xi'<\Gamma_0$ with $\xi=\ot_{\alpha}{\beta }+{\gamma }$ and $\xi' = \ot_{\alpha'}{\beta'}+{\gamma'}$ both in Veblen normal form, $\xi<\xi'$ if and only if one of the following holds:
\begin{multicols}2
\begin{enumerate}

\item $\alpha=\alpha'$, $\beta=\beta'$ and $\gamma<\gamma'$;

\item $\al<\al'$ and $\be<\upphi_ {\alpha'}\beta'$;

\item $\al=\al'$ and $\be<\be'$, or

\item $\al'<\al$ and $\upphi _\al\be < \be'$.

\end{enumerate}
\end{multicols}
\end{Lemma}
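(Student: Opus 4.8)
The statement as printed looks slightly garbled — clauses (2)–(4) overlap — so the real content is the standard recursive comparison of Veblen normal forms, and the plan is to prove exactly that characterization. Write $\xi \vnf \ot_\alpha\beta+\gamma$ and $\xi'\vnf\ot_{\alpha'}\beta'+\gamma'$ with $\gamma<\xi$, $\beta<\ot_\alpha\beta$ and similarly for the primed data. The idea is to split into the three mutually exclusive cases $\alpha=\alpha'$, $\alpha<\alpha'$, $\alpha'<\alpha$, and in each case reduce the comparison $\xi<\xi'$ to a comparison of strictly smaller ordinals (the parameters $\alpha,\beta,\gamma$), so that the lemma becomes an induction on, say, $\max(\xi,\xi')$.

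The key steps I would carry out, in order: \textbf{(i)} Record the basic facts about $\ot$ from the definition in Section \ref{secHyper}: each $\ot_\alpha$ is normal; the values $\ot_{\alpha'}\eta$ for $\alpha'>\alpha$ are precisely the common fixed points of all $\ot_\xi$ with $\xi\leq\alpha$; and each $\ot_\alpha\beta$ is additively indecomposable. In particular $\gamma<\ot_\alpha\beta$ always holds, so the Veblen normal form is genuinely the ``Cantor-style'' decomposition $\ot_\alpha\beta+\gamma$ with $\gamma$ strictly below the leading term. \textbf{(ii)} Case $\alpha=\alpha'$: since $\ot_\alpha$ is strictly increasing, $\ot_\alpha\beta<\ot_\alpha\beta'$ iff $\beta<\beta'$, and if $\beta=\beta'$ then additive cancellation gives $\xi<\xi'$ iff $\gamma<\gamma'$; if $\beta>\beta'$ then $\xi\geq\ot_\alpha\beta>\ot_\alpha\beta'+\gamma'=\xi'$ using additive indecomposability of $\ot_\alpha\beta$. \textbf{(iii)} Case $\alpha<\alpha'$: here $\ot_{\alpha'}\beta'$ is a fixed point of $\ot_\alpha$, so $\xi<\xi'$ iff the leading term $\ot_\alpha\beta$ is $<\ot_{\alpha'}\beta'$ (again additive indecomposability swallows $\gamma$), and because $\ot_{\alpha'}\beta'=\ot_\alpha(\ot_{\alpha'}\beta')$ and $\ot_\alpha$ is strictly increasing, this is equivalent to $\beta<\ot_{\alpha'}\beta'$. \textbf{(iv)} Case $\alpha'<\alpha$: symmetric to (iii), giving $\xi<\xi'$ iff $\ot_\alpha\beta<\ot_{\alpha'}\beta'$ iff $\ot_\alpha\beta<\beta'$ (using that $\ot_\alpha\beta$ is a fixed point of $\ot_{\alpha'}$). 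In (iii) and (iv) the side condition $\beta<\ot_\alpha\beta$ (resp.\ $\beta'<\ot_{\alpha'}\beta'$) is what makes these equivalences tight, and it is exactly why the decomposition is unique.

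I expect the main obstacle to be purely bookkeeping rather than conceptual: one must be careful that the parameters appearing in the reduced comparisons ($\beta$ vs.\ $\ot_{\alpha'}\beta'$, etc.) are strictly smaller than $\max(\xi,\xi')$ so the induction is well-founded, and one must double-check the side conditions $\gamma<\ot_\alpha\beta$ and $\beta<\ot_\alpha\beta$ are genuinely available from the definition of Veblen normal form. A secondary nuisance is reconciling the printed four-clause statement with the clean three-case argument — presumably clauses (2)–(4) should read ``$\alpha<\alpha'$ and $\beta<\ot_{\alpha'}\beta'$'', ``$\alpha=\alpha'$ and ($\beta<\beta'$, or $\beta=\beta'$ and $\gamma<\gamma'$)'', ``$\alpha'<\alpha$ and $\ot_\alpha\beta<\beta'$'' — so I would simply verify that the argument above yields precisely those disjuncts. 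Since this lemma is quoted from \cite{Pohlers:2009:PTBook}, the cleanest route in the paper itself is to cite it and give the two-line sketch above; but the full inductive argument is exactly as outlined and requires nothing beyond normality of the $\ot_\alpha$ and additive indecomposability, both of which are recalled in Section \ref{secHyper}.
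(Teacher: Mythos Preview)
Your proposal is correct and is the standard argument; the paper itself does not prove this lemma at all but simply cites \cite{Pohlers:2009:PTBook}, exactly as you anticipate in your final paragraph. So there is nothing to compare: your sketch supplies a proof where the paper gives only a reference, and your three-case reduction via normality of $\ot_\alpha$, the fixed-point characterization of $\ot_{\alpha'}$ for $\alpha'>\alpha$, and additive indecomposability is precisely the textbook route.
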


In order to prove independence from $\atr$, we also need to review fundamental sequences based on Veblen notation.

\begin{Definition} \label{fundseq}
For $\xi < \Gamma_0$ and $x<\omega$, define $\fs \al x$ recursively as follows.
First we set $ \xcases x \alpha = x + 1$ if $\alpha$ is a successor, $ \xcases x\alpha = 1$ otherwise.
Then, define:
\begin{multicols}2
\begin{enumerate}

\item $\fs 0 x = 0$.

\item $\fs{ ( \ot_\alpha \be + \gamma ) }x= \ot_\alpha \be +  \fs \ga x$ if $\ga > 0$.

\item $\fs{(\upphi_00)} x =0$ (note that $  {\upphi_00} = 1$).

\item $\fs{\upphi_0(\be+1)}x=\upphi_0\be\cdot (x+2)$.

\columnbreak

\item $\fs{(\upphi_\alpha0)}x:=\upphi^{\xcases x\alpha}_{\fs \alpha x }  0 $ if $\alpha > 0$.

\item $\fs{\ot_\alpha(\be+1)}x:=
\ot^{\xcases x\alpha}_{\fs\alpha x} (\ot_\alpha\be + 1 )$ if $\alpha > 0 $. \label{fssuc}

\item $\fs{(\ot_\alpha\lambda)}x:=\ot_{\alpha}(\fs \lambda x)$ if $\lambda$ is a limit. \label{fslim}

\end{enumerate}
\end{multicols}
\end{Definition}

For an ordinal $\xi<\Gamma_0$ and $n\geq 0$ we define inductively $\fsi \xi 0 =  \xi $ and $\fsi\xi{ n+1} = \fs{\fsi \xi  n }{n+1}$.
The system of fundamental sequences satisfies the {\em Bachmann property} \cite{Schmidt77}:

\begin{Proposition}\label{propBachmann}
If $\alpha,\beta < \Gamma_0$ and $k< \omega$ satisfy $\fs \alpha k < \beta < \alpha$, then $\fs\alpha k \leq \fs \be 1$.
\end{Proposition}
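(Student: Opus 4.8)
The plan is to prove the Bachmann property by induction on the Veblen normal form of $\alpha$, following the standard pattern for such arguments (cf.\ \cite{Schmidt77}), and carefully tracking how the fundamental sequence interacts with the order relation given by the preceding Lemma. First I would dispose of the trivial cases: if $\alpha = 0$ the hypothesis $\fs\alpha k < \beta < \alpha$ is vacuous, and if $\alpha = \upphi_0 0 = 1$ then $\fs\alpha k = 0$, so $\beta = 0$ would have to lie strictly between $0$ and $1$, again vacuous. So I may assume $\alpha \geq \omega$ and write $\alpha \vnf \upphi_{\alpha_0}\beta_0 + \gamma_0$.

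Next I would split according to which clause of Definition \ref{fundseq} computes $\fs\alpha k$. The case $\gamma_0 > 0$ (clause 2) reduces directly to the inductive hypothesis applied to $\gamma_0$: from $\upphi_{\alpha_0}\beta_0 + \fs{\gamma_0}k < \beta < \upphi_{\alpha_0}\beta_0 + \gamma_0$ one reads off that $\beta = \upphi_{\alpha_0}\beta_0 + \delta$ with $\fs{\gamma_0}k < \delta < \gamma_0$, apply I.H.\ to get $\fs{\gamma_0}k \leq \fs\delta 1$, and then $\fs\beta 1 = \upphi_{\alpha_0}\beta_0 + \fs\delta 1 \geq \fs\alpha k$. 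The remaining cases have $\gamma_0 = 0$, so $\alpha = \upphi_{\alpha_0}\beta_0$ is additively indecomposable, and I would further split on whether $\beta_0 = 0$ (clauses 3, 5), $\beta_0$ is a successor (clauses 4, 6), or $\beta_0$ is a limit (clause 7). In each subcase $\fs\alpha k$ is built from iterating $\upphi$ with a smaller first index $\fs{\alpha_0}k$ (or, for $\upphi_0$, is a product), and one must verify that any $\beta$ with $\fs\alpha k < \beta < \alpha$ itself has a Veblen normal form whose leading term is controlled, so that the recursion defining $\fs\beta 1$ can be compared termwise with $\fs\alpha k$; the key algebraic facts are that $\upphi_\xi\eta$ is a strictly increasing fixed point of all $\upphi_{\xi'}$ with $\xi'<\xi$, and monotonicity of $\upphi$ in both arguments, together with the order Lemma above.

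The main obstacle I anticipate is the successor-index case $\alpha = \upphi_{\alpha_0}(\beta_0+1)$ with $\alpha_0 > 0$ (clause \ref{fssuc}), where $\fs\alpha k = \upphi^{\xcases k{\alpha_0}}_{\fs{\alpha_0}k}(\upphi_{\alpha_0}\beta_0 + 1)$: here one must argue that a $\beta$ sandwiched strictly between this iterate and $\upphi_{\alpha_0}(\beta_0+1)$ is forced to be of the form $\upphi_{\alpha_0'}\beta_0' + \gamma_0'$ with $\alpha_0' < \alpha_0$ (since $\beta < \upphi_{\alpha_0}(\beta_0+1)$ means $\beta$ is not a fixed point of $\upphi_{\alpha_0}$ above $\upphi_{\alpha_0}\beta_0$), and then chase the definition of $\fs\beta 1$ through clauses 5 and 6 to see that it dominates $\fs\alpha k$. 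The limit-index case $\alpha = \upphi_{\alpha_0}\lambda$ with $\lambda$ limit uses that $\fs\alpha k = \upphi_{\alpha_0}(\fs\lambda k)$ and a secondary induction on $\lambda$ via the I.H., and should be comparatively routine. Throughout, I would lean on the fact that $e^\alpha\beta$ (equivalently the relevant $\upphi$-values) are additively indecomposable, as recorded after the Hyperexponential Definition, to guarantee that Veblen normal forms behave well under the manipulations above.
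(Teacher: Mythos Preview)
The paper does not give a proof of this proposition; it simply cites it as the Bachmann property from \cite{Schmidt77} and moves on. So there is no ``paper's own proof'' to compare against---your proposal is not duplicating or diverging from anything in the text, it is supplying what the paper chose to outsource.

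That said, your sketch follows the standard route from Schmidt, and the overall architecture (induction on $\alpha$, case split along the clauses of Definition~\ref{fundseq}) is correct. One imprecision worth flagging: in the successor case $\alpha=\upphi_{\alpha_0}(\beta_0+1)$ with $\alpha_0>0$, you claim that any $\beta$ with $\fs\alpha k<\beta<\alpha$ has Veblen normal form with leading index $\alpha_0'<\alpha_0$. This is true, but your justification (``$\beta$ is not a fixed point of $\upphi_{\alpha_0}$ above $\upphi_{\alpha_0}\beta_0$'') is not quite the right lever; what you need is that $\fs\alpha k$ is itself additively indecomposable and strictly larger than $\upphi_{\alpha_0}\beta_0$, so the leading term of $\beta$'s normal form lies in the open interval $(\upphi_{\alpha_0}\beta_0,\upphi_{\alpha_0}(\beta_0+1))$ and hence cannot be a value of $\upphi_{\alpha_0}$ at all. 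With that in hand the rest of the case goes through by a further subcase analysis (on whether the remainder $\gamma_0'$ vanishes and on the shape of $\alpha_0'$ relative to $\fs{\alpha_0}k$), each step appealing to the induction hypothesis at a smaller ordinal. The other cases you describe are handled as you indicate.
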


This property is useful because it allows us to appeal to the following, proven in \cite{FernWierCiE2020}.

\begin{Proposition}\label{propBachmannMaj}
If $(\alpha_i)_{i \in \mathbb N}$ is a sequence of elements of $\Gamma_0$ such that for all $i$,
\[\alpha_i [i+1] \leq \alpha_{i+1} \leq \alpha_i,\]
it follows that for all $i$, $\alpha_i \geq \fsi {\alpha_0} {i}$.
\end{Proposition}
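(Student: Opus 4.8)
The plan is to prove the bound by induction on $i$, writing $\gamma_i:=\fsi{\alpha_0}{i}$, so that $\gamma_0=\alpha_0$ and $\gamma_{i+1}=\fs{\gamma_i}{i+1}$. The base case $i=0$ is trivial, and in the successor step I may always assume $\gamma_i>0$, since otherwise $\gamma_{i+1}=0\leq\alpha_{i+1}$.

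Before the induction I would record two elementary facts about the fundamental sequences of Definition \ref{fundseq}, each proved by a routine induction on the Veblen normal form: (i) $\fs\xi x<\xi$ whenever $\xi>0$, and (ii) $\fs\xi x\leq\fs\xi y$ whenever $x\leq y$. Combining (ii) with the Bachmann property (Proposition \ref{propBachmann}) gives the form I will actually use: if $\fs\alpha m<\beta<\alpha$ and $m\geq 1$, then $\fs\alpha m\leq\fs\beta 1\leq\fs\beta m$.

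For the successor step, assume $\alpha_i\geq\gamma_i$ together with $\fs{\alpha_i}{i+1}\leq\alpha_{i+1}\leq\alpha_i$; the goal is $\alpha_{i+1}\geq\gamma_{i+1}$. Two cases are immediate: if $\alpha_{i+1}\geq\gamma_i$ then $\alpha_{i+1}\geq\gamma_i>\fs{\gamma_i}{i+1}=\gamma_{i+1}$ by (i); and if $\gamma_i=\alpha_i$ then $\gamma_{i+1}=\fs{\alpha_i}{i+1}\leq\alpha_{i+1}$. This leaves the delicate case $\fs{\alpha_i}{i+1}\leq\alpha_{i+1}<\gamma_i<\alpha_i$, in which the canonical descent has fallen strictly below $\alpha_i$ while $\alpha_{i+1}$ was chosen lower still. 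Applying the upgraded Bachmann property to $\alpha_i$, the index $i+1$, and $\beta:=\gamma_i$ yields $\fs{\alpha_i}{i+1}\leq\gamma_{i+1}$; but this bounds $\fs{\alpha_i}{i+1}$ from above rather than $\alpha_{i+1}$ from below, so it does not on its own finish the step.

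The hard part is therefore to rule this configuration out, and I expect one must strengthen the induction hypothesis to carry information about the whole canonical continuation from $\gamma_i$, not merely about $\gamma_i$ versus $\alpha_i$ — concretely, the invariant that $\gamma_i<\alpha_i$ forces $\gamma_i\leq\fs{\alpha_i}{i+1}$, which makes the delicate case vacuous. Verifying that this invariant is preserved — splitting on whether $\alpha_{i+1}$ equals $\alpha_i$, equals $\fs{\alpha_i}{i+1}$, or lies strictly between, and invoking Bachmann together with (ii) in each subcase — is where essentially all the work lies; an equivalent repackaging is to fix the least $i_0$ with $\alpha_{i_0}<\gamma_{i_0}$, note that then $\alpha_{i_0-1}>\gamma_{i_0-1}$ with $\fs{\alpha_{i_0-1}}{i_0}<\gamma_{i_0-1}<\alpha_{i_0-1}$, trace back to the greatest $j<i_0$ with $\alpha_j=\gamma_j$ (which exists since $\alpha_0=\gamma_0$), and propagate the resulting inequality forward via Bachmann and (ii) to contradict $\alpha_{i_0}<\gamma_{i_0}$.
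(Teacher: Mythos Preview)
The paper does not actually prove Proposition~\ref{propBachmannMaj}: it is stated with a citation to \cite{FernWierCiE2020} and then used as a black box in the proof of Lemma~\ref{lemmBCvsFS}. So there is no in-paper argument to compare against; what you have written is an attempt to supply a proof where the paper gives none.

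Your overall strategy---strengthen the inductive hypothesis and drive the argument with the Bachmann property together with monotonicity of $\fs{\cdot}{x}$ in $x$---is the standard one and is on the right track. You also correctly isolate the obstruction: the naive hypothesis $\alpha_i\geq\gamma_i$ does not survive the sub-case $\fs{\alpha_i}{i+1}\leq\alpha_{i+1}<\gamma_i<\alpha_i$, and some invariant that pins $\gamma_i$ below $\fs{\alpha_i}{i+1}$ (whenever $\gamma_i<\alpha_i$) is what is needed to make that sub-case vacuous. That diagnosis is exactly right.

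What is missing is the execution, and you say so yourself. The case split you outline for propagating the invariant is incomplete, and one branch is genuinely delicate: when $\alpha_i>\gamma_i$, $\gamma_i\leq\fs{\alpha_i}{i+1}$, and $\alpha_{i+1}=\fs{\alpha_i}{i+1}$ with $\gamma_i<\alpha_{i+1}$, you must show $\gamma_{i+1}\leq\fs{\alpha_{i+1}}{i+2}$. A single application of Bachmann to $\alpha_{i+1}$ and $\gamma_i$ only gives the dichotomy ``either $\fs{\alpha_{i+1}}{i+2}\geq\gamma_i$ or $\fs{\alpha_{i+1}}{i+2}\leq\fs{\gamma_i}{1}$'', and the second horn points the wrong way. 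Closing this branch requires either a sharper invariant or an auxiliary argument; as written, neither your forward-induction sketch nor the least-counterexample repackaging explains how this step goes through. Until that sub-case is handled, the proposal remains a plausible outline rather than a proof.
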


This allows us to establish new independence results by appealing to the fact that $\atr$ does not prove that the process of stepping down the fundamental sequences below $\Gamma_0$ always reaches zero.
By recursion on $n$ define $\gamma_n<\Gamma_0$ as follows, $\gamma_0:=0$ and $\gamma_{n+1}:=\upphi_{\ga_n} 0 .$
Then, $ \forall m \exists \ell \ ( \fsi{\ga_m} \ell  =0 )$ is not provable in $ \atr $.
In fact, the least $\ell$ such that $\fsi{\ga_m} \ell  =0$ grows more quickly than any provably total computable function.

Let us make this precise.
For our purposes, a partial function $f\colon \mathbb N \to \mathbb N$ is {\em computable} if there is a $\Sigma_1$ formula $\varphi_f (x,y)$ in the language of first order arithmetic (with no other free variables) such that for every $m,n$, $f(m) = n$ if and only if $\varphi_f (m,n)$ holds.
The function $f$ is {\em provably total} in a theory $T$ if $T\vdash \forall x \exists y \varphi _f (x,y) $ (more precisely, $f$ is provably total if there is at least one such choice of $ \varphi _f $).
Then, the function $F$ such that $F(m)$ is the least $\ell$ with $\fsi{\ga_m} \ell  =0 $ is computable and, by the well-foundedness of $\Gamma_0$, total.
Moreover, it gives an upper bound for all the provably total computable functions in $\atr$.

\begin{Theorem}[\cite{Schmidt77}]\label{theoATRInd}
If $f \colon \mathbb N \to \mathbb N$ is a computable function that is provably total in $\atr$, then $\exists N \forall n>N \ (f(n) < F(n))$.
\end{Theorem}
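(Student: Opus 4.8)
The plan is to route the argument through the standard correspondence between the proof-theoretic ordinal of a theory and its provably total computable functions, specialized to the fundamental sequences of Definition~\ref{fundseq}. First I would introduce the ``descent'' hierarchy $(g_\gamma)_{\gamma<\Gamma_0}$ attached to these fundamental sequences, where $g_\gamma(m)$ counts the steps in the descent from $\gamma$ in which the $k$-th step applies $[\,m+k\,]$; with the clock started at $0$ this is exactly $F$, in the sense that $g_{\gamma_m}(0)=F(m)$ by the definition of the iterated fundamental-sequence operation and of the $\gamma_m$. The argument then splits into an upper and a lower bound. For the \emph{upper bound} I would invoke the proof-theoretic analysis of $\atr$: since its proof-theoretic ordinal is $\Gamma_0$, the subrecursive refinement of that analysis --- which is precisely what \cite{Schmidt77} supplies, and which is available because the fundamental sequences of Definition~\ref{fundseq} satisfy the Bachmann property (Proposition~\ref{propBachmann}) --- yields that every computable $f$ that is provably total in $\atr$ is eventually dominated by $g_\gamma$ for some single $\gamma<\Gamma_0$. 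For the \emph{lower bound} I would show that $F$ eventually dominates every $g_\gamma$ with $\gamma<\Gamma_0$. Combining these, for $f$ provably total we fix $\gamma<\Gamma_0$ with $f(m)\le g_\gamma(m)$ for large $m$, and then $F(m)>g_\gamma(m)\ge f(m)$ for large $m$; note that this simultaneously disposes of the apparent weakening in the contrapositive (``$f(m)\ge F(m)$ infinitely often''), since being dominated by one fixed $g_\gamma$ is a cofinite condition.

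For the lower bound the crucial observation is that $\Gamma_0=\sup_m\gamma_m$: because $\Gamma_0$ is the least fixed point of the normal function $\alpha\mapsto\upphi_\alpha 0$ and $\gamma_{m+1}=\upphi_{\gamma_m}0$, Proposition~\ref{fixedpoints} gives $\gamma_m<\Gamma_0$ for all $m$ together with $\gamma_m\to\Gamma_0$. So, fixing $\gamma<\Gamma_0$, I would pick $M$ with $\gamma<\gamma_M$ and, for $m\ge M$, compare the descent from $\gamma_m$ underlying $F(m)$ with the descent from $\gamma$ underlying $g_\gamma(m)$. The descent from $\gamma_m$ must first pass through ordinals below $\gamma_{m-1},\dots,\gamma_M$ and only afterwards below $\gamma$; since it runs on a slowly increasing clock its early steps are small, so by the time it drops below $\gamma$ the clock has advanced well past $m$. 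Making this precise is exactly where the majorization principle of Proposition~\ref{propBachmannMaj} enters: it keeps the descent from $\gamma_m$ pointwise above the appropriately reindexed descent from $\gamma$, hence strictly longer once $m$ is large, which gives $F(m)=g_{\gamma_m}(0)>g_\gamma(m)$ for all sufficiently large $m$, as required.

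I expect the main obstacle to be twofold. The substantial ingredient is the \emph{subrecursive} form of the ordinal analysis of $\atr$: one needs not merely that $|\atr|=\Gamma_0$ but that the provably total computable functions of $\atr$ are precisely those eventually dominated by some $g_\gamma$ with $\gamma<\Gamma_0$, along the specific fundamental sequences of Definition~\ref{fundseq}; this rests essentially on \cite{Schmidt77} (and on the Bachmann property of Proposition~\ref{propBachmann}, which is what makes those fundamental sequences usable for subrecursive classification at all), and it is not something I would reprove here. The more technical point is the bookkeeping in the lower bound: $F$ runs on an absolute clock, with the $k$-th descent step forced to use parameter $k$ and no freedom to start at a large argument, whereas $g_\gamma$ carries a free argument, so one must verify that for $m\ge M$ the many small early steps of the descent from the large ordinal $\gamma_m$ already overtake the whole descent from $\gamma$ --- which is exactly what Propositions~\ref{propBachmann} and~\ref{propBachmannMaj} are there to support.
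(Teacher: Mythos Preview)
The paper does not prove this theorem at all: it is stated with the attribution \cite{Schmidt77} and used as a black box in the proof of Theorem~\ref{theoBCInd}. There is thus no ``paper's own proof'' to compare your proposal against. Your sketch is a reasonable outline of the standard route to such a result (classify the provably total functions of $\atr$ via a slow-growing or Hardy-type hierarchy indexed along $\Gamma_0$, then show that $F$ eventually majorizes each level), and you correctly identify that the heavy lifting---the subrecursive ordinal analysis of $\atr$---is precisely the content of the cited reference rather than something to reprove. For the purposes of this paper, however, the appropriate ``proof'' is simply the citation; your proposal goes well beyond what is expected or provided here.
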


So, our goal will be to show that the witnesses for Theorem \ref{theoBracketPrinciple} grow at least as quickly as $F$, from which we obtain that the theorem is unprovable in $\atr$.
We begin with a straightforward technical lemma, which will be useful below.

\begin{Lemma} \label{monBracketWorm}
Let $a_0, \ldots ,a_k, b_0, \ldots b_k \in \wormsBC$. If for any $j$ with $ 0 \leq j \leq k$, we have that $a_j \mathrel \unlhd b_j$, then $\ob a_0 \cb \ldots \ob a_k \cb \mathrel\unlhd   \ob b_0 \cb \ldots \ob b_k \cb$.
\end{Lemma}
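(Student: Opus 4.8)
The plan is to prove the statement by induction on $k$, using Rules \ref{r:3} and \ref{r:5} to gradually migrate the inner brackets. First I would handle the base case $k=0$: here the claim $\ob a_0 \cb \mathrel\unlhd \ob b_0 \cb$ follows directly from the first part of Rule \ref{r:3} applied with $\varphi = \psi = \top$ and the hypothesis $a_0 \mathrel\unlhd b_0$, since $\top \vdash \top$. Actually, one should notice that $b \mathrel\unlhd a$ in the notation of Rule \ref{r:3} means $a \vdash \ob\cb b$ or $a \vdash b$, which is precisely $a \mathrel\unrhd b$; matching the present hypothesis $a_j \mathrel\unlhd b_j$, i.e., $b_j \mathrel\unrhd a_j$, means I apply Rule \ref{r:3} with the roles set so that the outer bracket moves from $b_j$ to $a_j$.

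For the inductive step, suppose the claim holds for $k$ and we are given $a_0, \ldots, a_{k+1}, b_0, \ldots, b_{k+1}$ with $a_j \mathrel\unlhd b_j$ for all $j \leq k+1$. I would first apply the induction hypothesis to $a_1, \ldots, a_{k+1}$ and $b_1, \ldots, b_{k+1}$ to obtain $\ob a_1 \cb \ldots \ob a_{k+1} \cb \mathrel\unlhd \ob b_1 \cb \ldots \ob b_{k+1} \cb$. Unpacking $\mathrel\unlhd$, this means either $\ob b_1 \cb \ldots \ob b_{k+1} \cb \vdash \ob\cb \ob a_1 \cb \ldots \ob a_{k+1} \cb$ or $\ob b_1 \cb \ldots \ob b_{k+1}\cb \vdash \ob a_1 \cb \ldots \ob a_{k+1}\cb$; in either case the inner derivability $\ob b_1 \cb \ldots \ob b_{k+1}\cb \vdash \ob a_1 \cb \ldots \ob a_{k+1}\cb$ need not hold on the nose, so I need to be slightly careful. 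The cleanest route is to note that from $\psi \vdash \chi$ and $b \mathrel\unlhd a$ Rule \ref{r:3} gives $\ob a \cb \psi \vdash \ob b \cb \chi$; so applying Rule \ref{r:3} with $a := b_0$, $b := a_0$ (legitimate since $a_0 \mathrel\unlhd b_0$), $\psi := \ob b_1 \cb \ldots \ob b_{k+1}\cb$ and $\chi := \ob a_1 \cb \ldots \ob a_{k+1}\cb$ — provided $\psi \vdash \chi$ holds — yields $\ob b_0 \cb \ldots \ob b_{k+1}\cb \vdash \ob a_0 \cb \ldots \ob a_{k+1}\cb$, which is stronger than what we want. The subtlety is exactly whether the induction hypothesis delivers $\vdash$ or only $\vdash \ob\cb (\cdot)$.

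To sidestep this, I would instead strengthen the induction to track both disjuncts, or — more simply — observe that if $\ob b_1 \cb \ldots \ob b_{k+1}\cb \vdash \ob\cb \ob a_1 \cb \ldots \ob a_{k+1}\cb$, then since $b_0 \mathrel\unrhd a_0 \mathrel\unlhd \ob\cb a_0$ — wait, here one uses that $\ob\cb a_0 \mathrel\unlhd b_0$ whenever $a_0 \mathrel\unlhd b_0$, which may require an auxiliary sublemma about $\mathrel\unlhd$. The genuinely careful bookkeeping of the two cases of $\mathrel\unlhd$ is the main obstacle; everything else is a routine application of Rule \ref{r:3} $k$ times (peeling off one outer bracket at a time), and no instance of Rule \ref{r:5} or the conjunction rules is actually needed. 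So the proof I would write is: induct on $k$; in the step, apply the induction hypothesis to the tails, then apply Rule \ref{r:3} once with the pair $(b_0, a_0)$, splitting into the cases according to which disjunct of $\mathrel\unlhd$ the induction hypothesis and the hypothesis $a_0 \mathrel\unlhd b_0$ provide, and in each case reading off the desired conclusion $\ob a_0 \cb \ldots \ob a_k \cb \mathrel\unlhd \ob b_0 \cb \ldots \ob b_k \cb$ directly from the form of Rule \ref{r:3} (which handles both $\ob a\cb \varphi \vdash \ob b\cb\psi$ and $\ob a\cb\ob b\cb\varphi\vdash\ob b\cb\psi$, covering the absorption of an extra leading $\ob\cb$).
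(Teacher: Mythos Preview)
Your approach—induction on $k$ using Rule \ref{r:3}—is exactly the paper's, and your base case matches theirs. But you are making the inductive step harder than necessary. Notice that the paper's base case actually derives $\ob b_0\cb \vdash \ob a_0\cb$ (plain $\vdash$, not merely $\mathrel\unlhd$), and the ``analogous reasoning'' in the step preserves this: from a strengthened induction hypothesis $\ob b_1\cb\ldots\ob b_k\cb \vdash \ob a_1\cb\ldots\ob a_k\cb$ together with $a_0 \mathrel\unlhd b_0$, a single application of the first clause of Rule \ref{r:3} yields $\ob b_0\cb\ldots\ob b_k\cb \vdash \ob a_0\cb\ldots\ob a_k\cb$. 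So the option you mention in passing—strengthening the induction to track $\vdash$ rather than $\mathrel\unlhd$—is precisely what the paper does, and it eliminates all the case analysis you worry about.

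One correction: your claim that the second clause of Rule \ref{r:3} ``covers the absorption of an extra leading $\ob\cb$'' is a misreading. That clause, $\ob a\cb\ob b\cb\varphi \vdash \ob b\cb\psi$, absorbs an outer modality on the \emph{antecedent} side; it does not let you remove a stray $\ob\cb$ sitting inside the consequent (which is what you would need if the IH really only delivered $\vdash \ob\cb(\cdot)$). Fortunately, with the strengthened induction this situation never arises, so the concern is moot.
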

\begin{proof}
By a  simple induction on $k$ applying Rule \ref{r:3}. For the base, we have that
\begin{prooftree}
\AxiomC{$\top \vdash \top$}
\AxiomC{$a_0 \mathrel \unlhd b_0$}
\def\extraVskip{2.5pt}
\RightLabel{\scriptsize(Rule \ref{r:3})}
\BinaryInfC{$\ob b_0 \cb \vdash \ob a_0 \cb$}
\end{prooftree}
The inductive step follows from the I.H. with an analogous reasoning.
\end{proof}

It will be useful to extend the $\uparrow$ operation of Definition \ref{defUparrow} to elements of $\wormsBC$ using the $\iota$ operation of Definition \ref{defIota}.

\begin{Definition} \label{Def:uparrow}
Let $a \in \wormsBC$, $\al < \Gamma_0$. By $\al \uparrow a$ we denote the expression $(\al \uparrow a^\ast)^\iota \in \wormsBC$.
\end{Definition}

\begin{Lemma} \label{sequparrow}
For any $a \in \wormsBC$, $k < \omega$ and $\al < \Gamma_0$:
\[
o^\ast \Big( (\al \uparrow a)\{k\} \Big) \geq o^\ast \Big( \al \uparrow (a\{k\})\Big).
\]
\end{Lemma}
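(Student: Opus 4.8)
The plan is to unwind the definitions and reduce the statement to a fact purely about worms and the $\uparrow$ operation, and then prove that by induction on the structure (equivalently, the nesting) of $a$. First I would note that, by Definition \ref{Def:uparrow} and Remark \ref{remark:iotatau}, we have $o^\ast(\al\uparrow a) = o(\al\uparrow a^\ast) = e^\al o(a^\ast) = e^\al o^\ast(a)$ by Theorem \ref{TheoTranOrder}.\ref{TheoTranOrderItThree}. So the right-hand side is $o^\ast(\al\uparrow(a\{k\})) = e^\al o^\ast(a\{k\})$. The left-hand side is more delicate because $\{k\}$ does not commute with $\uparrow$ at the level of worms: taking a fundamental sequence ``undoes'' an initial modality, whereas $\uparrow$ shifts every modality up by $\al$. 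The key point is that $(\al\uparrow a)\{k\}$ corresponds, at the level of worms, to prepending a block that is at least as large as what $\al\uparrow(a\{k\})$ prepends, because the shift by $\al$ only makes the entries bigger, hence the threshold $\ell$ in the third clause of the definition of fundamental sequences can only move to the right, giving a longer prefix and a larger ordinal.

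The concrete steps would be: (1) handle the trivial cases $a = \top$ and $a = \ob\cb a'$, where both sides are easily computed (for $a = \top$ both sides are $0$; for $a = \ob\cb a'$, $\al\uparrow a$ has first entry $\al$, and if $\al = 0$ it reduces to the previous analysis, while if $\al>0$ the first modality of $\al\uparrow a$ is nonzero and $(\al\uparrow a)\{k\}$ falls under the third clause). (2) For $a = \ob a_1\cb\ldots\ob a_m\cb$ with $a_1\neq\top$, compare the parsing of the definition of $\{k\}$ applied to $a$ versus applied to $\al\uparrow a$: since $o^\ast(\al\uparrow a_i) = e^\al o^\ast(a_i)$ and $e^\al$ is strictly increasing (normality of hyperexponentials), the relative $\lhd$-order among the $a_i$ is preserved, and moreover $\al\uparrow a_1$ is $\lhd$-above $a_1$ when $\al > 0$; use this to show that the index $\ell$ (least with $a_\ell\lhd a_1$) for $\al\uparrow a$ is $\geq$ the corresponding index for $a$. (3) Combine the induction hypothesis applied to $a_1$ (giving $o^\ast((\al\uparrow a_1)\{k\})\geq o^\ast(\al\uparrow(a_1\{k\}))$) with Lemma \ref{monBracketWorm} and Corollary \ref{Lemma:ordinalsboundnesting} (or directly with Theorem \ref{TheoTranOrder} to compute order types of the resulting iterated blocks $b^{k+1}c$) to conclude the inequality on order types. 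Throughout, I would use Theorem \ref{theoPreserve} freely to move between $\lhd$-facts and order-type facts, and the additive-indecomposability of $e^\al\be$ for $\al>0$ to control how the shift interacts with concatenation of worms.

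The main obstacle I anticipate is step (2): carefully matching up the case split in the definition of $a\{k\}$ (which depends on $\min(a)$ and the threshold $\ell$) with the case split for $(\al\uparrow a)\{k\}$, and verifying that the shift by $\al$ can only enlarge the prefix block $b$ and thus the resulting ordinal. One has to be careful that $\al\uparrow a_1$ need not be the literal prefix $\al + (\text{first entry of } a_1)$ — rather $\al\uparrow a_1 = (\al\uparrow a_1^\ast)^\iota$ is the normal-form representative of $e^\al o^\ast(a_1)$ — so the argument should be conducted at the level of order types via $o^\ast$ rather than by syntactic manipulation of brackets, invoking $e^\al o^\ast((\al\uparrow a)\{k\}) $ computations through Theorem \ref{TheoTranOrder}. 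A clean way to organize this is to prove the slightly more informative statement that $(\al\uparrow a)\{k\}$ is $\unrhd$ (in the $\wormsBC$ ordering) to $\al\uparrow(a\{k\})$ followed by a possibly-nonempty additional block, and then read off the order-type inequality; but if that turns out awkward, a direct induction computing both order types via Theorem \ref{TheoTranOrder} and comparing the hyperexponential expressions term by term will suffice.
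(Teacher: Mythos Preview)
Your overall scheme matches the paper's: induct on the structure of $a$, apply the hypothesis to the first entry, and push the inequality through via Lemma~\ref{monBracketWorm}. The paper even displays the identity $\al\uparrow a = \ob\al\uparrow a_0\cb\ob\al\uparrow b_0\cb\ldots\ob\al\uparrow b_j\cb$ and then cites the induction hypothesis on $a_0$ to obtain $o^\ast\big((\al\uparrow a_0)\{k\}\big) \geq o^\ast\big(\al\uparrow(a_0\{k\})\big)$.

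The worry you raise in your final paragraph is not a detail to be smoothed over but a genuine gap, and it is present in the paper's argument as well. If $a = \ob a_1\cb\ldots\ob a_m\cb$, then by Definition~\ref{Def:uparrow} the first entry of $\al\uparrow a$ is the ${\sf NF}$ worm $c$ with $o^\ast(c) = \al + o^\ast(a_1)$, whereas $\al\uparrow a_1$ satisfies $o^\ast(\al\uparrow a_1) = e^\al o^\ast(a_1)$; the displayed identity in the paper is therefore false, and the induction hypothesis on $a_1$ tells you nothing about $c\{k\}$. Your proposed cure of ``conducting the argument at the level of order types'' cannot rescue this, because the lemma is false as stated. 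Take $\al = 1$ and $a = \ob\ob\ob\cb\cb\cb$: then $a^\ast = \langle\omega\rangle\top$ and, since $1 + \omega = \omega$, Definition~\ref{Def:uparrow} gives $1\uparrow a = a$. Hence $(1\uparrow a)\{0\} = a\{0\} = \ob\ob\cb\cb$ with $o^\ast = \omega$, while $1\uparrow(a\{0\}) = 1\uparrow\ob\ob\cb\cb$ has $o^\ast = e^1\omega = \omega^\omega > \omega$. The failure occurs precisely when $\al$ is absorbed by an entry of $a$; a corrected version of the lemma would need a side condition ruling this out (and the invocations in the proof of Lemma~\ref{lemmBCvsFS} would then have to be checked against that condition).
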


\begin{proof}
By induction on the complexity of $a$ with the base case being trivial. For the inductive step, let $a := \ob a_0 \cb \ob b_0 \cb \ldots \ob b_j \cb$. Observe that 
\[
\al \uparrow a = \ob  \al \uparrow a_0 \cb  \ob \al \uparrow b_0 \cb \ldots \ob \al \uparrow  b_j  \cb
\]
and so,
\[
(\al \uparrow a)\{k\} = \big( \ob  (\al \uparrow a_0)\{k\} \cb  \ob \al \uparrow b_0 \cb \ldots \ob \al \uparrow  b_i \cb \big)^{k+1} \ob \al \uparrow  b_{i+1}  \cb \ldots \ob \al \uparrow  b_j  \cb
\]
By the I.H. we have that $o^\ast \big( (\al \uparrow a_0)\{k\} \big) \geq o^\ast \big( \al \uparrow (a_0\{k\}) \big)$ and so, $\al \uparrow (a_0\{k\}) \mathrel \unlhd (\al \uparrow a_0)\{k\}$. Therefore, applying Lemma \ref{monBracketWorm} we can conclude that:
\[
\big( \ob  \al \uparrow (a_0 \{k\} )\cb  \ob \al \uparrow b_0 \cb \ldots \ob \al \uparrow  b_i \cb \big)^{k+1} \ob \al \uparrow  b_{i+1}  \cb \ldots \ob \al \uparrow  b_j  \cb \mathrel \unlhd  (\al \uparrow a)\{k\} 
\]
Thus with the help of Theorem \ref{TheoTranOrder} and Proposition \ref{propEvsPhi} together with Definition \ref{Def:uparrow},
\begin{tabbing}
$o^\ast \big( (\al \uparrow a)\{k\} \big)$ \= ${\geq}$ \\[0.20cm]
\> \hspace{-0.3cm}$o^\ast \Big( \big( \ob  \al \uparrow (a_0 \{k\} )\cb  \ob \al \uparrow b_0 \cb \ldots \ob \al \uparrow  b_i \cb \big)^{k+1} \ob \al \uparrow  b_{i+1}  \cb \ldots \ob \al \uparrow  b_j  \cb \Big)$\\[0.20cm]
\> $= e^\al \Big( o^\ast \big( ( \ob  a_0 \{k\} \cb  \ob b_0 \cb \ldots \ob b_i \cb \big)^{k+1} \ob b_{i+1}  \cb \ldots \ob b_j  \cb \big) \Big)$\\[0.20cm]
\> $= e^\al \big( o^\ast ( a\{k\} ) \big)$\\[0.20cm]
\> $= o^\ast \big( \al \uparrow (a\{k\}) \big)$. 
\end{tabbing}
 
\end{proof}

The following is the key lemma in showing that Theorem \ref{theoBracketPrinciple} implies that stepping down the fundamental sequences eventually reaches zero.

\begin{Lemma}\label{lemmBCvsFS}
If $ a \in \wormsBC$, $a' = \ob \cb a$ and $1<k<\omega$ then
\[ \fsi{ o^*(a)}k \leq o^*( a' \lBrace k + 1\rBrace).\]
\end{Lemma}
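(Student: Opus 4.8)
The plan is to reduce the claim to a single-step comparison between worm fundamental sequences and Veblen fundamental sequences, and then to apply Proposition \ref{propBachmannMaj} to a suitably chosen descending sequence of ordinals.

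First I would use the special form of $a' = \ob\cb a$. Since the leading bracket of $a'$ is $\top$, unwinding the definition of $w\mapsto w\{n\}$ gives $a'\{n\} = a$ for every $n$; hence $a'\lBrace 1\rBrace = a$, and for $j \ge 1$ the worm $a'\lBrace j+1\rBrace$ is precisely the result of applying $\{2\},\{3\},\ldots,\{j+1\}$ to $a$ in succession. Now put $\alpha_i := o^*(a'\lBrace i+1\rBrace)$, so that $\alpha_0 = o^*(a)$ and $\alpha_k = o^*(a'\lBrace k+1\rBrace)$; the claim becomes $\fsi{\alpha_0}{k} \le \alpha_k$, and this follows from Proposition \ref{propBachmannMaj} once we check $\alpha_i[i+1] \le \alpha_{i+1} \le \alpha_i$ for all $i$ (the sequence eventually stabilizes at $0$ since $\top\{n\}=\top$). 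The inequality $\alpha_{i+1} \le \alpha_i$ is immediate: when $a'\lBrace i+1\rBrace \neq \top$, Proposition \ref{AgtAn} gives $a'\lBrace i+2\rBrace \mathrel\lhd a'\lBrace i+1\rBrace$, and $\mathrel\lhd$ forces a strict decrease of $o^*$ (cf.\ the proof of Theorem \ref{theoBracketPrinciple}, via Theorem \ref{theoPreserve}); when $a'\lBrace i+1\rBrace = \top$ both sides are $0$.

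The crux is the left inequality, which reduces to the estimate: for all $b \in \wormsBC$ and all $n \ge 1$,
\[ \fs{o^*(b)}{n} \le o^*\big(b\{n+1\}\big) \]
(apply it with $b = a'\lBrace i+1\rBrace$, $n = i+1$). I would prove this by induction on the nesting ${\sf N}(b)$, with cases according to the shape of $b$, matched with the Veblen normal form of $o^*(b)$ through Proposition \ref{propEvsPhi} (which translates the hyperexponential $e^\mu$ arising from Theorem \ref{TheoTranOrder}, after factoring the minimal modality out of $b^\ast$, into Veblen functions). If $b = \top$ both sides are $0$. If $b = \ob\cb c$, then $o^*(b) = o^*(c)+1$ is a successor, $b\{n+1\} = c$, and a short computation from Definition \ref{fundseq} gives $\fs{o^*(b)}{n} = o^*(c)$, so equality holds. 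If $b = \ob b_1\cb\ldots\ob b_m\cb$ with $b_1 \neq \top$, then $o^*(b)$ is a limit; here one separates, using the additive clause of Definition \ref{fundseq}, the part of $o^*(b)$ coming from the brackets below the leading modality (which the worm clause leaves in place as the suffix $\ob b_\ell\cb\ldots\ob b_m\cb$, where $\ell$ is least with $b_\ell\mathrel\lhd b_1$), applies the induction hypothesis to $b_1$ to get $o^*(b_1\{n+1\}) \ge \fs{o^*(b_1)}{n}$, invokes Lemma \ref{sequparrow} to commute the step $\{n+1\}$ past the shift by the minimal modality, and assembles the bound with Lemma \ref{monBracketWorm} together with the monotonicity of $o^*$ and of the hyperexponentials. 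The mechanism that makes the induction close is that the worm clause forms the power $(\,\cdot\,)^{n+1}$, i.e.\ $n+2$ blocks at the step indexed $n+1$, which is always at least as many as the $\xcases n\alpha$ iterated Veblen blocks (or the factor $n+2$) produced on the Veblen side at index $n$, so that the loss from comparing index $n$ with the worm step $n+1$ is absorbed. With the estimate established, Proposition \ref{propBachmannMaj} applied to $(\alpha_i)$ yields $\alpha_k \ge \fsi{\alpha_0}{k}$, which is the claim.

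The main obstacle is the last case of the single-step estimate: synchronizing the branching of the worm fundamental sequence (the role of $\min(b)$, the least index $\ell$ with $b_\ell \mathrel\lhd b_1$, and the $(\,\cdot\,)^{n+1}$ iteration) with the branching of the Veblen fundamental sequence on Veblen normal forms (the additive clause, the $\upphi_0$-clause, and above all the clauses introducing iterated Veblen functions $\upphi^{\xcases n\alpha}_{\fs\alpha n}$), and checking sub-case by sub-case that the worm side dominates. This is exactly where Proposition \ref{propEvsPhi}, Lemma \ref{sequparrow}, and the bookkeeping of the $n$ versus $n+1$ shift are all needed.
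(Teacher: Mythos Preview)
Your reduction is exactly the paper's: use $a'\{1\}=a$, set $\alpha_i=o^*(a'\lBrace i+1\rBrace)$, check $\alpha_i[i+1]\le\alpha_{i+1}\le\alpha_i$ via Proposition~\ref{AgtAn} and the single-step estimate $o^*(b)[n]\le o^*(b\{n+1\})$, and conclude with Proposition~\ref{propBachmannMaj}. So the architecture is right and matches the paper.

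The gap is in your handling of the single-step estimate. Two concrete problems:

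\emph{Wrong induction variable and wrong target for the hypothesis.} The paper inducts on $o^*(b)$, not on ${\sf N}(b)$, and the case split is by the Veblen normal form of $o^*(b)$, not by the syntactic shape of $b$. The reason this matters is that the induction hypothesis is \emph{not} always applied to $b_1$. In the additively decomposable case (paper's Case~1) one writes $b=b_0\ob\cb c$ and applies the hypothesis to the prefix $b_0$, which can have the same nesting as $b$; in the limit case $o^*(b)=\upphi_\delta\beta$ with $\beta\in\Lim$ (paper's Case~2.4) one writes $b=\omega^\delta\uparrow c$ and applies the hypothesis to the \emph{downshifted} worm $c$ (with $o^*(c)=\beta<o^*(b)$, but again not with smaller nesting in general), and only then uses Lemma~\ref{sequparrow} to pull $\{n+1\}$ past the shift $\omega^\delta\uparrow$. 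Your plan ``I.H.\ on $b_1$, then Lemma~\ref{sequparrow}'' corresponds to neither of these; applying Lemma~\ref{sequparrow} with shift $o^*(b_1)$ to $b=\ob b_1\cb$ (so $b=o^*(b_1)\uparrow\ob\cb$) gives only the trivial bound $o^*(b\{n+1\})\ge 0$.

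\emph{The ``additive separation of the suffix'' is not what you claim.} The suffix $\ob b_\ell\cb\ldots\ob b_m\cb$ starting at the first $b_\ell\mathrel\lhd b_1$ does \emph{not} in general contribute a pure additive summand to $o^*(b)$: the additive split in Theorem~\ref{TheoTranOrder} happens only at an occurrence of the \emph{global} minimum modality, and $b_\ell$ need not realize it. For instance, with $b=\ob b_1\cb\ob b_2\cb$ and $0<o^*(b_2)<o^*(b_1)$ one gets $o^*(b)=e^{o^*(b_2)}\big(1+e^{o^*(b_1)-o^*(b_2)}1\big)$, which is additively indecomposable, so there is nothing for the additive clause of Definition~\ref{fundseq} to separate.

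In short: keep your reduction, but for the single-step estimate switch to induction on $o^*(b)$ and follow the Veblen-normal-form case split as the paper does, applying the hypothesis to the prefix (decomposable case), to the inner index $b_1$ (the $\beta=0$ and the $\upphi_0(\beta'+1)$ cases), to the worm $d$ with $o^*(d)=\delta$ (the $\upphi_\delta(\beta'+1)$ case), and to the downshifted worm (the limit-$\beta$ case) with Lemma~\ref{sequparrow} used in that last case.
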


\proof
It suffices to show that $o^*(a) [k] \leq o^*(a\{k + 1\})$ for every $a \in \wormsBC$.
The lemma then follows from the Bachmann property (Proposition \ref{propBachmann}), since then we can apply Proposition \ref{propBachmannMaj} to the sequence $(\alpha_i)_{i\leq \omega} $ with $\alpha_0 = o^*(a)$ and $\alpha_{i+1} = a' \lBrace i + 2\rBrace$ to obtain $\fsi{ o^*(a ) } k < o^* (a' \lBrace k + 1 \rBrace )$.
We proceed by induction on $o^\ast(a)$.
If $o^\ast(a) = 0$, the claim is trivially true, so we assume otherwise. Write $o^\ast(a) = \upphi_\delta \beta +\gamma$ in Veblen normal form and consider the following cases.
\medskip

\begin{enumerate}[label*={\sc Case \arabic*},wide, labelwidth=!, labelindent=0pt]

\item ($\gamma>0$). Then, $o^\ast(a)$ is additively decomposable.
By inspection on Theorem \ref{TheoTranOrder}, $a  $ is of the form $ a_0 \ob \cb b$ and $o^\ast (a)[k] = o^\ast(b) + o^\ast(a_0)[k]$. By the I.H., $o^\ast(a_0)[k] \leq o^\ast(a_0\{k+1\})$, therefore $o^\ast(b) + o^\ast(a_0)[k] \leq o^\ast(b) + o^\ast(a_0\{k+1\}) = o^\ast( a_0\{k+1\} \ob \cb b) = o^\ast(a\{k+1\})$. 
\medskip

\item ($\gamma=0$).
Then, $ o^\ast(a ) = \varphi_\delta \beta$, with $\delta, \beta < o^\ast(a )$. We distinguish several sub-cases.
\medskip

\begin{enumerate}[label*= .\arabic*,wide, labelwidth=!, labelindent=0pt]

\item ($\delta = 0$ and $\beta = \beta'+1$). Then, inspection of Theorem \ref{TheoTranOrder} and Proposition \ref{propEvsPhi} (which we will no longer mention in subsequent cases) shows that \[
o^\ast(a) = \upphi_0 \beta' + 1 = e^1 (\beta' + 1) = e^1 \big( o^\ast(\ob\cb b)\big)
\]
where $o^\ast(b) = \be'$. Thus, $a = \ob \ob \cb \cb ( 1 \uparrow b)$. We can observe that $a\{k+2\} =  \Big( \ob \cb\ \big( 1 \uparrow b \big) \Big)^{k+2}$ and so we have that:
\begin{align*}
o^\ast ( a\{k+1\}) & = o^\ast (1 \uparrow b ) \cdot (k+2) + 1 = \upphi_0 \big( o^\ast (b) \big) \cdot (k+2) + 1\\ 
&> \upphi_0 (\be') \cdot (k+2)
= \upphi_0 (\be' + 1 ) [k].
\end{align*}

\item ($\delta>0$ and $\beta = 0$). Let $d$ be such that $o^\ast (d) = \delta $. Then, we have that $a = \ob b \cb$ with 
\[o^\ast (b) = \omega^\delta = e^1 (\delta) = e^1 (o^\ast(d))= o^\ast(1 \uparrow d).\]
Therefore, $a = \ob (1\uparrow d) \cb$, and so we have that
\begin{align*}
o^\ast (a\{k + 1\}) & = o^\ast\big( \ob (1 \uparrow d)\{k + 1\}\cb^{k+2} \big)\\
&= e^{o^\ast\big((1 \uparrow d)\{k+1\}\big)} (k+2) \geq e^{\omega^\delta [k]} (k+2),
\end{align*}
where the last inequality uses the induction hypothesis on $\delta < o^* (a)$.
We claim that
\[
e^{\omega^\delta [k]} (k+2) \geq \upphi_{\delta[k]}^{\xcases k\delta} (1);
\]
indeed, if $ \delta = \delta'+1$ is a successor, then $\omega^\delta [k] = \omega^{\delta'}\cdot ( k +2)$, so
\[e^{\omega^\delta [k]} (k+2) = e^{\omega^{\delta'}\cdot (k+2)} (k+2) \geq \upphi_{\delta'}^{k+2} (k+1)  > \upphi_{\delta[k]}^{\xcases k\delta} (1), \]
while, if $\delta$ is a limit,
\[e^{\omega^\delta [k]} (k+2) = e^{\omega^{\delta[k]} } (k+2) \geq \upphi_{\delta[k]}  (k+1) \geq \upphi_{\delta[k]}^{\xcases k\delta} (1). \]
Hence, $o^\ast (a) [k] \leq o^\ast (a\{k + 1\})$.
\medskip

\item ($\delta>0$ and $\be=\be'+1$).
By Definition \ref{fundseq}, Item \ref{fssuc}, we have that $o^\ast(a)[k] = \upphi_{\delta[k]}^{\xcases k\delta} \big( \upphi_\delta (\be') + 1 \big)$. 
%
Since $\upphi_\delta (\be') $ is infinite, we have that $ 1+ \upphi_\delta (\be')  = \upphi_\delta (\be') $, and hence $\upphi_{\delta[k]}^ i \big( \upphi_\delta (\be') + 1 \big) = e^{\omega^{\delta[k]} \cdot i} \big ( e^{\omega^\delta} (\be') + 1 \big) $ for all $i$.
Then, $a  = \ob \ob d \cb \cb \omega^\delta \uparrow b$, where $o^\ast(b) = 1 + \be'$ and $o^\ast(d) = \delta$, and we have that
\[
o^\ast(a)[k] = o^\ast\big( \omega^{\delta[k]} \uparrow \ob \cb ( \omega^\delta \uparrow b ) \big).
\]
By the I.H. and Lemma \ref{monBracketWorm},
\begin{align*}
o^\ast\big( \omega^{\delta[k]} \uparrow \ob \cb ( \omega^\delta \uparrow b ) \big) & \leq  o^\ast\big( o^*(d\{k+ 1 \} ) \uparrow \ob \cb ( \omega^\delta \uparrow b ) \big) \\
& = o^\ast\big(  \ob \ob d\{k+ 1\} \cb \cb ( \omega^\delta \uparrow b )  \big) \leq o^\ast (  a\{k + 1\} ). 
\end{align*}

\item ($\delta>0$ and $\beta \in \Lim$).
Then, $a = \omega^\delta \uparrow b$, where $o^\ast (b) = \beta$. By Definition \ref{fundseq}, Item \ref{fslim}, we get that $\upphi_\delta (\beta)[k] = \upphi_\delta \big(o^\ast(b)[k]\big)$ and since by the I.H. $o^\ast(b)[k] \leq o^\ast (b\{k + 1\})$, we have that $\upphi_\delta \big(o^\ast(b)[k]\big) \leq \upphi_\delta \big(o^\ast(b\{k + 1\})\big)$. On the other hand, 
\[
\upphi_\delta \big(o^\ast(b\{k + 1\})\big) = e^{\omega^\delta} \big(o^\ast(b\{k + 1\})\big) = o^\ast \Big( \omega^\delta \uparrow (b\{k + 1\}) \Big).
\] 
By Lemma \ref{sequparrow}, $o^\ast \big( \omega^\delta \uparrow (b\{k + 1 \}) \big) \leq o^\ast \big( (\omega^\delta \uparrow b)\{k + 1 \} \big)$. Thus, $o^\ast(a)[k] \leq o^\ast(a\{k + 1\})$.
\end{enumerate}
\end{enumerate}
\endproof

\begin{Theorem}\label{theoBCInd}
Theorem \ref{theoBracketPrinciple} is not provable in $\atr$.
\end{Theorem}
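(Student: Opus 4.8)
The plan is to reduce Theorem~\ref{theoBracketPrinciple} to Theorem~\ref{theoATRInd} by exhibiting a computable function which, on the assumption that $\atr$ proves Theorem~\ref{theoBracketPrinciple}, would be provably total in $\atr$ while eventually dominating the fast-growing function $F$.

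First I would fix, for each $m\in\mathbb N$, the unique $c_m\in{\sf NF}$ with $o^*(c_m)=\ga_m$; this is available because $\ga_m<\Gamma_0$ (as $\Gamma_0$ is closed under $\upphi$) and, as established in the proof of the last theorem of the previous section, $o^*$ restricts to a bijection between ${\sf NF}$ and $\Gamma_0$. Since $m\mapsto\ga_m$ and $\al\mapsto a_\al$ are both computable, so is $m\mapsto c_m$. Then define $G\colon\mathbb N\to\mathbb N$ by letting $G(m)$ be the least $i$ with $(\ob\cb c_m)\lBrace i\rBrace=\top$. The function $G$ is clearly computable, and it is total by Theorem~\ref{theoBracketPrinciple}; the key observation is that if $\atr\vdash$ Theorem~\ref{theoBracketPrinciple}, then $\atr$ proves the instance $\forall m\,\exists i\,((\ob\cb c_m)\lBrace i\rBrace=\top)$, so $G$ is provably total in $\atr$.

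Next I would compare $G$ with $F$ via Lemma~\ref{lemmBCvsFS}. Applying that lemma with $a=c_m$ and $k=G(m)-1$ --- which is legitimate as soon as $G(m)\ge 3$ --- gives $\fsi{\ga_m}{G(m)-1}\le o^*\big((\ob\cb c_m)\lBrace G(m)\rBrace\big)=o^*(\top)=0$, so $\fsi{\ga_m}{G(m)-1}=0$, whence $F(m)\le G(m)-1<G(m)$. The same computation with $k=2$ shows that if $(\ob\cb c_m)\lBrace 3\rBrace=\top$ then $F(m)\le 2$; since $F(m)\ge 3$ for all sufficiently large $m$ ($F$ being non-decreasing and unbounded, a routine property of these fundamental sequences), we get $G(m)\ge 4$, and in particular $G(m)\ge 3$, for all large $m$.

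Finally the contradiction: assuming $\atr\vdash$ Theorem~\ref{theoBracketPrinciple}, the function $G$ is computable and provably total in $\atr$, so Theorem~\ref{theoATRInd} yields $N$ with $G(n)<F(n)$ for all $n>N$; choosing any $n>N$ large enough that also $G(n)\ge 3$ contradicts $F(n)<G(n)$. I expect the only delicate point to be the bookkeeping in the third paragraph: matching the number of $\lBrace\cdot\rBrace$-steps to the index along the fundamental sequence with the right off-by-one, and confirming that $G$ is unbounded so that the regime $G(n)\ge 3$ is eventually entered. Everything else is a direct appeal to Lemma~\ref{lemmBCvsFS}, Theorem~\ref{theoBracketPrinciple}, and Theorem~\ref{theoATRInd}.
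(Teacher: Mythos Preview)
Your proposal is correct and follows essentially the same strategy as the paper: pick worms $w_m$ with $o^*(w_m)=\gamma_m$, use Lemma~\ref{lemmBCvsFS} on $\ob\cb w_m$ to bound $F$ by the termination-time function $G$, and invoke Theorem~\ref{theoATRInd}. The only differences are cosmetic: the paper builds the $w_m$ explicitly (setting $a_0=\top$, $a_1=\ob\cb$, $a_{n+2}=\ob\ob\ob a_n\cb\cb\cb$) rather than appealing to the normal-form bijection, and it indexes $G$ so that $G(m)$ is the least $k$ with $a'_m\lBrace k+1\rBrace=\top$, which lets the inequality $F(m)\le G(m)$ come out without your off-by-one bookkeeping. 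Your treatment of the side condition $k>1$ in Lemma~\ref{lemmBCvsFS} is in fact more careful than the paper's, which leaves this implicit.
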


\begin{proof}
Let $\gamma_m$ be as in Theorem \ref{theoATRInd} and define recursively $a_0:=\top$, $a_1 := \ob \cb$, and $a_{n+2}:=\ob \ob \ob a_n \cb \cb \cb.$
Then, set $a'_n = \ob\cb a_n$.
We can observe that $o^\ast (a_m)  = \gamma_m $. For $m < 2$ the claim is trivial. For $m \geq 2$, it follows from a simple induction on $m$ together with the fact that $o^\ast(a^m) = h(m+1)$, as given in Definition \ref{defHFun}. Thus, Lemma \ref{lemmBCvsFS} yields
\[\fsi {\gamma_m} k = \fsi {o^\ast(a_m)} k  \leq o^\ast(  a_m' \lBrace k + 1 \rBrace ) \]
for all $k$.
It follows that, if $  a_m' \lBrace k + 1  \rBrace   = \top $ for some $k$, then $\fsi {\gamma_m} k = \fsi {o^\ast(a_m)} k =   0 $ for some $k$.

Recall that we had defined $F(m)$ to be the least $\ell$ so that $ \fsi {\gamma_m} \ell = 0$.
Defining $G(m)$ to be the least $k$ such that $ a_m' \lBrace k + 1  \rBrace = \top$, it follows that $F(m) \leq G(m)$ for all $m$, and hence, by Theorem \ref{theoATRInd}, $\atr$ does not prove that $G(m)$ is total; in other words, $\atr \not \vdash \forall m \exists k \ ( a'_m \lBrace k + 1 \rBrace = \top )$, and Theorem \ref{theoBracketPrinciple} is unprovable in $\atr$.
\end{proof}

\section{Concluding remarks}

Beklemishev's `brackets' provided an autonomous notation system for $\Gamma_0$ based on worms, but did not provide a method for comparing different worms without first translating into a more traditional notation system.
Our calculus $\BC$ shows that this is not necessary, and indeed all derivations may be carried out entirely within the brackets notation.
To the best of our knowledge, this yields the first ordinal notation system presented as a purely modal deductive system.

Our analysis is purely syntactical and leaves room for a semantical treatment of $\BC$.
As before one may first map $\BC$ into $\RC $ and then use the Kripke semantics presented in \cite{Beklemishev:2013:PositiveProvabilityLogic,Dashkov:2012:PositiveFragment}, but we leave the question of whether it is possible to define natural semantics that work only with $\BC$ expressions and do not directly reference ordinals.

The independence of Theorem \ref{theoBracketPrinciple} provides a relatively simple combinatorial statement independent of the rather powerful theory $\atr$.
In particular, our fundamental sequences for worms enjoy a more uniform definition than those based on Veblen functions.
It is of interest to explore whether this can be extended to provide statements independent of much stronger theories.
In \cite{Spiders}, we suggest variants of the brackets notation for representing the Bachmann-Howard ordinal and beyond.
Sound and complete calculi for these systems remain to be found, as do natural fundamental sequences leading to new independent combinatorial principles.

%
\bibliographystyle{plain}
 \bibliography{References}

\end{document}